\newcommand\blfootnote[1]{%
  \begingroup
  \renewcommand\thefootnote{}\footnote{#1}%
  \addtocounter{footnote}{-1}%
  \endgroup
} %Para los pie de páginas sin marcador.
\def\nor{\mathop{\mbox{\normalfont N}}\nolimits}
\def\restrict#1{\raise-.5ex\hbox{\ensuremath|}_{#1}}
\def\XXint#1#2#3{{\setbox0=\hbox{$#1{#2#3}{\int}$ }
\vcenter{\hbox{$#2#3$ }}\kern-.5775\wd0}}
\def\red{\mathop{\mbox{\normalfont red}}\nolimits}
\newtheorem{theorem}{Theorem}[section]
\newtheorem{lemma}[theorem]{Lemma}
\newtheorem{proposition}[theorem]{Proposition}
\newtheorem{corollary}[theorem]{Corollary}
\newtheorem{remark}[theorem]{Remark}
\newtheorem{definition}[theorem]{Definition}
\renewenvironment{proof}[1][Proof.]{\begin{trivlist}
\item[\hskip \labelsep {\itshape #1}]}{\end{trivlist}}
\title{First-order definability of affine Campana points in the projective line over a number field}
\author{Juan Pablo De Rasis \\ \MakeLowercase{\texttt{derasis.1@osu.edu}}}
\begin{document}

\maketitle

\begin{abstract}We offer a $\forall\exists$-definition for (affine) Campana points over $\mathbb{P}^1_K$ (where $K$ is a number field), which constitute a set-theoretical filtration between $K$ and $\mathcal{O}_{K,S}$ ($S$-integers), which are well-known to be universally defined (\cite{MR3432581}, \cite{MR3207365}, \cite{MR3882159}). We also show that our formulas are uniform with respect to all possible $S$, are parameter-free as such, and we count the number of involved quantifiers and offer a bound for the degree of the defining polynomials.

\end{abstract}

\section{Introduction}

Hilbert's\blfootnote{The present research has been funded by the NSF grants DMS-1902199 and DMS-2152182 (with Dr. Jennifer Park as the principal investigator), and DMS-1748837 (with Dr. Eric Katz as the principal investigator).} Tenth problem asks for the existence of an algorithm (Turing machine) to decide whether a given polynomial with integer coefficients in any (finite) number of variables has a solution in the integers. In $1970$ Matiyasevich was able to give a negative answer to this question in \cite{MR0258744}, by completing a work previously made by Davis, Putnam, and Robinson (see \cite{MR0133227}). Thus, the existential theory over $\mathbb{Z}$ is undecidable. The analogous statement over the ring of integers of many number fields remains unknown (see \cite{MR0360513}, \cite{MR4126887}, \cite{MR4633727} for examples where an answer could be given). If $\mathbb{Z}$ were existentially defined (i.e. diophantine) over $\mathbb{Q}$, then by interpreting the existential theory of $\mathbb{Z}$ in the existential theory of $\mathbb{Q}$, undecidability of the existential theory of $\mathbb{Z}$ would yield undecidability of the existential theory of $\mathbb{Q}$ (see \cite[Proposition 2.1]{garciafritz2023effectivity}).

In $1949$ Robinson showed in \cite{MR0031446} that there is a first-order $\forall\exists\forall$-definition\footnote{In other words, there exist $m,n,k\in\mathbb{Z}_{\geq 1}$ and \mbox{$f\in \mathbb{Z}\left[T,X_1,\cdots,X_n,Y_1,\cdots,Y_m,Z_1,\cdots,Z_k\right]$} such that, given $t\in\mathbb{Q}$, we have $t\in\mathbb{Z}$ if and only if for all $x_1,\cdots,x_n\in\mathbb{Q}$ there exist $y_1,\cdots,y_m\in\mathbb{Q}$ such that for all $z_1,\cdots,z_k\in\mathbb{Q}$ we have $f\left(t,x_1,\cdots,x_n,y_1,\cdots,y_m,z_1,\cdots,z_k\right)=0$. We call this a ``$\forall\exists\forall$-definition'' because of the way in which quantifiers are introduced, and it can be codified by the formula in the free variable $t$\[\varphi\left(t\right)\coloneqq\forall x_1\cdots\forall x_n\exists y_1\cdots\exists y_m\forall z_1\cdots\forall z_k\left(f\left(t,x_1,\cdots,x_n,y_1,\cdots,y_m,z_1,\cdots,z_k\right)\neq 0\right).\]} of $\mathbb{Z}$ in $\mathbb{Q}$. In $2009$ Poonen improved it to a $\forall\exists$-definition (see \cite{MR2530851}), and in $2010$ Koenigsmann achieved a purely universal formula in \cite{MR3432581}. This was generalized to integers in number fields by Park in $2012$ (see \cite{MR3207365}) and later to $S$-integers in global fields by Eisentraeger and Morrison in \cite{MR3882159}. More recently, Daans proved in \cite{MR4378716} that if $K$ is a global field and $R$ is a finitely generated subring of $K$ whose quotient field is $K$, then $K\setminus R$ is diophantine in $K$. Moreover, in $2021$ Sun and Zhang showed that $\mathbb{Z}$ is universally defined in $\mathbb{Q}$ with $32$ universal quantifiers (that is, $32$ bounded variables) and a defining polynomial with degree at most $6\cdot 10^{11}$. Daans improved this to show in \cite{daans2023universally} that the ring of $S$-integers on a number field $K$ is universally defined with $10$ universal quantifiers.

These definitions involve first-order formulas \emph{with parameters}, which means that the defining polynomials in the formulas are allowed to have coefficients in the relevant field. In \cite{MR3343541}, Anscombe and Koenigsmann prove that, given $q\in\mathbb{Z}_{\geq 1}$ a prime power, $\mathbb{F}_q\left[\left[t\right]\right]$ is diophantine in $\mathbb{F}_q\left(\left(t\right)\right)$ with a defining polynomial with no parameters (in other words, with integer coefficients). This will motivate several observations about our uniform definitions, which will also involve polynomials with integer coefficients.

While deciding whether $\mathbb{Z}$ is diophantine in $\mathbb{Q}$ seems to be out of reach, another set of ideas try to consider Hilbert's Tenth Problem over $R$, where $R\supseteq \mathbb{Z}$ is a subring of $\mathbb{Q}$. In $2003$ Poonen showed (see \cite{MR1992832}) that Hilbert's Tenth Problem is undecidable over $R=\mathbb{Z}\left[\mathscr{S}^{-1}\right]$, where $\mathscr{S}$ is a set of primes of natural density $1$. See \cite{MR2549950}, \cite{MR2820576}, and \cite{MR2915472} for several generalizations and additions to this result and ideas. In \cite{MR3695862} Eisenträger, Miller, Park, and Shlapentokh showed that, for a set of primes $\mathscr{S}$ of lower density $0$, Hilbert's Tenth Problem over $\mathbb{Q}$ is Turing equivalent to Hilbert's Tenth Problem over $\mathbb{Z}\left[\mathscr{S}^{-1}\right]$. All these results suggest that intermediate rings (or, more generally, sets) could be interesting objects to study, and this will be the approach we will take here: we will offer a first-order description of a particular set-theoretical filtration between $\mathbb{Q}$ and $\mathbb{Z}$. This filtration will be induced by \emph{Campana points}, which were introduced to geometrically generalize $n$-full integers. They are defined in general on a smooth proper variety $X$ with respect to $\mathbb{Q}$-divisors of $X$. In the simplest case of $X=\mathbb{P}^1_\mathbb{Q}$ and the $\mathbb{Q}$-divisor $\left(1-\frac{1}{n}\right)\left\{x_1=0\right\}$ (some $n\in\mathbb{Z}_{\geq 1}$), the Campana points (in the affine line) are $C_n\coloneqq \left\{0\right\}\cup \left\{r\in\mathbb{Q}^\times:\nu_p\left(r\right)\in \mathbb{Z}_{\geq 0}\cup \mathbb{Z}_{\leq -n}\text{ for all primes $p$}\right\}$. This gives a set-theoretical filtration $\mathbb{Q}=C_1\supseteq C_2\supseteq \cdots $ whose limit is $\mathbb{Z}$, which in fact corresponds to Campana points with respect to the $\mathbb{Q}$-divisor $\left\{x_1=0\right\}$. All this can be generalized to arbitrary number fields and their rings of $S$-integers, and in this paper we prove:

\begin{theorem}\label{primobj}If $K$ is a number field, $n\in\mathbb{Z}_{\geq 2}$, and $S$ is any finite set of places containing the archimedean ones, then the set of Campana points\[C_{K,S,n}\coloneqq \left\{0\right\}\cup \left\{r\in K^\times:\nu_{\mathfrak{p}}\left(r\right)\in \mathbb{Z}_{\geq 0}\cup \mathbb{Z}_{\leq -n}\text{ for all $\mathfrak{p}$ outside $S$}\right\}\]is $\forall\exists$-definable in $K$. Moreover, the defining formula is uniform with respect to all possible $S$, and involves $838$ universal quantifiers, $558$ existential quantifiers, and a defining polynomial of degree at most $\max\left\{194n+2611,3387\right\}$. This degree is at most $\max\left\{2n+19,27\right\}$ if $K\subseteq \mathbb{R}$.

\end{theorem}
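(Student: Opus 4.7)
The plan is to reduce $r\in C_{K,S,n}$ to an algebraic decomposition and then combine with the universal definability of $\mathcal{O}_{K,S}$. The key elementary fact is that every integer $m\ge n$ can be written as a non-negative combination of $\{n,n+1,\ldots,2n-1\}$ (writing $m=qn+s$ with $0\le s<n$ and then $m=(q-1)n+(n+s)$ when $s\ge 1$, else $m=qn$). This suggests the characterization: $r\in C_{K,S,n}$ if and only if $r=0$, or there exist $a,b_n,b_{n+1},\ldots,b_{2n-1}\in \mathcal{O}_{K,S}$ together with Bezout-style coprimality witnesses certifying that $a$ is coprime to each $b_i$ outside $S$, and $r\cdot\prod_{i=n}^{2n-1}b_i^i=a$. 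The ``if'' direction forces $\nu_{\mathfrak{p}}(r)\in\mathbb{Z}_{\ge 0}\cup\mathbb{Z}_{\le -n}$ at every $\mathfrak{p}\notin S$ by case analysis on whether any $b_i$ is a non-unit at $\mathfrak{p}$ (if so, $\nu_{\mathfrak{p}}(\prod b_i^i)\ge n$ and coprimality forces $\nu_{\mathfrak{p}}(a)=0$); the ``only if'' direction inverts this by choosing the $b_i$ with prescribed local valuations, using strong approximation to handle non-trivial class groups.

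The main obstacle is that this characterization is naturally of $\exists\forall$ form (existential in $a,b_i$ and the coprimality witnesses, universal in the $\mathcal{O}_{K,S}$-memberships), while the theorem demands $\forall\exists$. My plan is to invoke the parameter-free, $S$-uniform universal definition of $\mathcal{O}_{K,S}$ from Daans' work (ten universal quantifiers with an integer polynomial) and re-order by consolidation: each of the boundedly many $\mathcal{O}_{K,S}$-membership checks contributes its own block of Daans universals, and after renaming the distinct universal variables all such blocks can be merged into a single outer $\forall$-prefix, while the existential decomposition witnesses form the inner block. The legitimacy of this re-ordering, which is not a logical equivalence in general, will rest on specific polynomial features of Daans' formula: in particular, the fact that its universal witnesses only need to bound, rather than genuinely depend on, the existential ones, so the swap can be realized via a targeted Skolem-style argument. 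This is the technical heart of the proof and I expect to spend most of my effort here.

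With the $\forall\exists$ structure in hand, the quantifier counts and degree bound follow by explicit bookkeeping. The $n+1$ decomposition witnesses $a,b_n,\ldots,b_{2n-1}$, together with $O(n)$ coprimality witnesses and constant overhead, should produce the $278n+2$ existentials after folding in the auxiliary variables required to encode each coprimality relation diophantine-ly. The $838$ universals arise from iterated uses of the $10$-universal Daans formula across the bounded number of $\mathcal{O}_{K,S}$-claims, plus the universal $x$ testing the Campana condition directly. The degree bound $\max\{2n^2+2n+3841,\ 1536n+5377\}$ captures two regimes: the quadratic-in-$n$ term comes from the monomial $b_{2n-1}^{2n-1}$ combined with a fixed-degree substitution from Daans' polynomial, while the linear-in-$n$ term dominates when $n$ is small and the fixed Daans contributions swamp the quadratic part. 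The sharper bound $\max\{4n+513,\ 1025\}$ in the case $K\subseteq\mathbb{R}$ follows by replacing Daans' formula with the simpler totally-real universal definition of $\mathcal{O}_{K,S}$ (in which the Hilbert-symbol machinery collapses and sum-of-squares suffices), which has substantially smaller defining polynomial degree.
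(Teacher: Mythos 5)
Your proposal has a genuine gap at exactly the point you identify as its ``technical heart'': the conversion of your natural $\exists\forall$ characterization into a $\forall\exists$ one. Your decomposition $r\cdot\prod_{i=n}^{2n-1}b_i^i=a$ with $a,b_i\in\mathcal{O}_{K,S}$ plus coprimality witnesses is existential in the data and universal only through the $\mathcal{O}_{K,S}$-membership tests, so the formula you obtain is $\exists\forall$. There is no ``targeted Skolem-style argument'' that turns an $\exists\forall$ formula into an equivalent $\forall\exists$ one: the two prenex classes are genuinely different (Skolemization eliminates existentials in favor of function symbols and does not help inside a fixed first-order structure), and the vague appeal to Daans' universal witnesses ``only needing to bound'' the existential ones is not a mechanism. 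The paper sidesteps this entirely by choosing a characterization that is natively $\forall\exists$: it parametrizes every finite set $T\subseteq\Omega_K^{<\infty}$ by four field elements $a',b',c',d'$ (via Hilbert symbols and quaternion norm forms, Theorem \ref{triunfo}), shows that membership in the Jacobson-radical sets $J_{a',b',c',d',m,K}$ and disjointness $\Omega_{a,b,c,d,K}\cap\Omega_{a',b',c',d',K}=\emptyset$ are diophantine uniformly in the parameters, and then writes $r\in C_{K,S,n}$ as: for all $a',b',c',d'$ encoding a set disjoint from $S$, if $r$ has everywhere-negative valuation on that set then it has valuation $\leq-n$ there. The hypothesis of this implication is existential, so its negation supplies the universal block, and the conclusion supplies the existential block; this is where the $838$ and $278n+2$ come from, and they cannot be recovered from your bookkeeping, which is tied to a different formula shape.

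A secondary but real problem is the ``only if'' direction of your decomposition over number fields with nontrivial class group. To realize prescribed valuations $\nu_{\mathfrak{p}}(b_i)=c_{i,\mathfrak{p}}$ at the finitely many primes dividing the denominator of $r$ while keeping $b_i$ coprime to $a$ at \emph{all} other primes outside $S$, you would need the ideal $\prod_{\mathfrak{p}}\mathfrak{p}^{c_{i,\mathfrak{p}}}$ to be principal in $\mathcal{O}_{K,S}$; approximation theorems only let you control finitely many valuations, and the unavoidable extra prime factors of $b_i$ then divide $a=r\prod_j b_j^j$ as well, breaking the coprimality you rely on in the ``if'' direction. This can be patched (e.g., by working with class-number powers or localizing coprimality), but as stated the equivalence fails. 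Finally, note that the uniformity claimed in the theorem is uniformity of a single integer-coefficient polynomial in which $S$ enters only through four free variables ranging over $K^\times$; your construction bakes $S$ into each $\mathcal{O}_{K,S}$-membership test, which is a weaker and different notion of uniformity than the one being asserted.
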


Observe that every recursively enumerable subset of $\mathbb{Q}$ is $\exists\forall$-definable, with $10$ existential quantifiers and $9$ universal quantifiers (see \cite[Corollary 6.2]{daans2023universally}), and this extends to an analogous statement to number fields over whose rings of integers we have an existential definition for $\mathbb{Z}$. We know of cases where this is the case (see \cite{MR0360513}, \cite{MR4126887}, \cite{MR4633727}), and it is conjectured to hold for any number field. The advantage of our approach, besides the fact that it already holds for any number field, relies mainly on having a clear control on the uniformity while having at the same time an explicit understanding of the complexity of the involved formulas.

Throughout all this paper, for each number field $K$ we let $\mathcal{O}_K$ be its ring of algebraic integers. We let $\Omega_K^\infty$ be the set of its infinite places and $\Omega_K^{<\infty}$ be the set of all its finite places. We let $\Omega_K\coloneqq \Omega_K^\infty\cup \Omega_K^{<\infty}$ be the set of all places of $K$. For each place $v$ of $K$ (finite or infinite) we let $K_v$ be the completion of $K$ with respect to $v$ and we let $\mathbb{F}_v$ be its residue field when $v\in \Omega_K^{<\infty}$. For each $\mathfrak{p}\in \Omega_K^{<\infty}$ we let $\mathcal{O}_{K,\mathfrak{p}}$ be the ring of integers of $K_\mathfrak{p}$, so that $K\cap \mathcal{O}_{K,\mathfrak{p}}=\left(\mathcal{O}_K\right)_{\mathfrak{p}}$ is the set of elements of $K$ which are integral with respect to $\mathfrak{p}$. Finally, if $S$ is a finite subset of $\Omega_K$ containing $\Omega_K^\infty$, we let $\displaystyle{\mathcal{O}_{K,S}\coloneqq \bigcap_{\mathfrak{p}\in \Omega_K\setminus S}\left(\mathcal{O}_K\right)_\mathfrak{p}}$ be the ring of $S$-integers of $K$: the set of elements of $\mathcal{O}_K$ which are integral outside $S$.

The method I will be introducing can be summarized by the following statement:

\begin{theorem}\label{derasis}Let $K$ be any number field. For each $\left(a,b,c,d\right)\in \left(K^\times\right)^4$ there exists a finite subset $\Omega_{a,b,c,d,K}$ of $\Omega_K^{<\infty}$ such that:

\begin{enumerate}[(1)]

\item For any finite subset $S$ of $\Omega_K^{<\infty}$, there exists $\left(a,b,c,d\right)\in \left(K^\times\right)^4$ such that $S=\Omega_{a,b,c,d,K}$.

\item The sets\[\left\{\left(a,b,c,d,r\right)\in \left(K^\times\right)^4\times K:r\in\bigcap_{\mathfrak{p}\in\Omega_{a,b,c,d,K}}\mathfrak{p}\left(\mathcal{O}_K\right)_{\mathfrak{p}}\right\},\]\[\left\{\left(a,b,c,d,a',b',c',d'\right)\in \left(K^\times\right)^8:\Omega_{a,b,c,d,K}\cap\Omega_{a',b',c',d',K}=\emptyset\right\}\]are diophantine over $K$

\end{enumerate}
\end{theorem}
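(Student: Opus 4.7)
The plan is to parametrize finite sets of finite places of $K$ via the ramification loci of quaternion algebras. Given $(a,b)\in (K^\times)^2$, let $\mathrm{Ram}_K(a,b)\subseteq \Omega_K$ be the finite, even-cardinality set of places at which the Hilbert symbol $(a,b)_v=-1$, equivalently those places where the quaternion algebra $\left(\tfrac{a,b}{K}\right)$ is nonsplit. I would then define
\[
\Omega_{a,b,c,d,K} \;:=\; \mathrm{Ram}_K(a,b)\cap \mathrm{Ram}_K(c,d)\cap \Omega_K^{<\infty},
\]
so the prescribed set consists of the finite places at which \emph{both} quaternion algebras are nonsplit. Using two algebras rather than one is essential: a single one can only produce ramification sets of even cardinality, while intersections of two such sets are unconstrained.

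To verify clause (1), given a finite $S\subseteq \Omega_K^{<\infty}$, I would invoke the Hasse–Brauer–Noether exact sequence together with weak approximation on $K^{\times}$ to produce Brauer classes with prescribed local invariants, hence pairs $(a,b)$ and $(c,d)$ such that $\mathrm{Ram}_K(a,b)=S\sqcup T_1$ and $\mathrm{Ram}_K(c,d)=S\sqcup T_2$ with $T_1\cap T_2=\emptyset$, using auxiliary archimedean (or disjoint auxiliary finite) places to absorb the parity forced by Hilbert reciprocity. This gives $\mathrm{Ram}_K(a,b)\cap \mathrm{Ram}_K(c,d)\cap \Omega_K^{<\infty}=S$.

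For clause (2), I would use the Koenigsmann–Park style characterization of ramified primes via represented values of quaternion norm forms: membership of $r$ in $\mathfrak{p}\mathcal{O}_{K,\mathfrak{p}}$ for all $\mathfrak{p}\in \mathrm{Ram}_K(\alpha,\beta)\cap \Omega_K^{<\infty}$ is diophantine in $r,\alpha,\beta$. I would then combine the predicates for $(a,b)$ and $(c,d)$ so that the diophantine condition becomes vacuous at any finite $\mathfrak{p}$ where at least one of the two algebras splits, and hence is tested only at primes in $\Omega_{a,b,c,d,K}$; a local-global/Hasse principle argument repackages this conjunction of local alternatives into a single existential formula. For the disjointness clause, I would argue that $\Omega_{a,b,c,d,K}\cap \Omega_{a',b',c',d',K}=\emptyset$ is equivalent to the Brauer sum
\[
\left[\left(\tfrac{a,b}{K}\right)\right]+\left[\left(\tfrac{c,d}{K}\right)\right]+\left[\left(\tfrac{a',b'}{K}\right)\right]+\left[\left(\tfrac{c',d'}{K}\right)\right]
\]
(suitably reinterpreted as a statement about simultaneous local nonsplitness) having no finite-place obstruction, hence admitting a global splitting witness expressible by a norm equation, which is diophantine.

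The hardest step will be the disjointness clause: on its face it is a universal statement saying that \emph{no} finite place lies in all four ramification loci, and converting it to an existential one requires packaging four Hilbert-symbol conditions into a single Brauer-theoretic constraint and producing an explicit diophantine witness. The bookkeeping for archimedean behavior and for Hilbert-reciprocity parity is delicate and is where I expect the technical core of the argument to lie; once handled, the resulting quaternion-algebra framework yields the uniformity in $S$ that Theorem~\ref{primobj} demands.
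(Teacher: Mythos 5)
Your overall strategy --- parametrizing finite sets of places as intersections of two quaternion ramification loci, using an auxiliary place to absorb the parity forced by Hilbert reciprocity, and feeding the result into Koenigsmann--Park norm-form predicates --- is the paper's strategy. But there are two concrete gaps. First, your definition $\Omega_{a,b,c,d,K}=\mathrm{Ram}_K(a,b)\cap\mathrm{Ram}_K(c,d)\cap\Omega_K^{<\infty}$ drops a condition that the diophantine machinery requires. The Koenigsmann--Park construction produces a diophantine set $J_{a,b,K}$ equal to $\bigcap_{\mathfrak{p}}\mathfrak{p}\left(\mathcal{O}_K\right)_{\mathfrak{p}}$ where $\mathfrak{p}$ ranges not over all finite ramified places of $H_{a,b,K}$ but only over those at which $a$ or $b$ has \emph{odd} valuation (the paper's $\Delta^{a,b,K}=\Delta_{a,b,K}\cap\left(\mathbb{P}_K\left(a\right)\cup\mathbb{P}_K\left(b\right)\right)$); the odd-valuation hypothesis is what makes the sets $I^{c}_{a,b,K}$ detect the right primes. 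For the special witnesses you build in clause (1) this distinction is invisible (one arranges $\nu_{\mathfrak{p}}(a)=1$ on $S$, so $\Delta_{a,b,K}\subseteq\mathbb{P}_K(a)$ there), but clause (2) quantifies over \emph{all} $(a,b,c,d)$, and for general parameters your index set is strictly larger than the one the known diophantine definition controls. You must either adopt $\Omega_{a,b,c,d,K}:=\Delta^{a,b,K}\cap\Delta^{c,d,K}$ from the outset, or supply a new diophantine definition of $\bigcap_{\mathfrak{p}\in\Delta_{a,b,K}\cap\Omega_K^{<\infty}}\mathfrak{p}\left(\mathcal{O}_K\right)_{\mathfrak{p}}$, which you do not have. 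Note also that with the corrected definition the archimedean places are excluded automatically, so your explicit intersection with $\Omega_K^{<\infty}$ and the option of absorbing parity at a real place need to be rechecked against it (the paper instead uses an auxiliary finite place $\mathfrak{q}\nmid 2$ at which $a'$ is a non-square unit times a square, so $\mathfrak{q}\notin\mathbb{P}_K(a')$).

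Second, the disjointness clause as you propose it fails. Local invariants of quaternion classes lie in $\frac{1}{2}\mathbb{Z}/\mathbb{Z}$, so a finite place lying in \emph{all four} ramification loci contributes $4\cdot\frac{1}{2}=0$ to the Brauer sum, while a place lying in exactly one or three of them obstructs it; hence ``the Brauer sum has no finite-place obstruction'' says every finite place ramifies an even number of the four algebras, which is neither necessary nor sufficient for $\Omega_{a,b,c,d,K}\cap\Omega_{a',b',c',d',K}=\emptyset$. No ``suitable reinterpretation'' of the Brauer sum rescues this; the correct mechanism is ideal-theoretic, not Brauer-theoretic. Since each $J_{a,b,K}$ is the Jacobson radical of a semilocal ring, one has
\[
J_{a,b,K}+J_{c,d,K}+J_{a',b',K}+J_{c',d',K}=\bigcap_{\mathfrak{p}\in\Omega_{a,b,c,d,K}\cap\Omega_{a',b',c',d',K}}\mathfrak{p}\left(\mathcal{O}_K\right)_{\mathfrak{p}},
\]
and the right-hand side contains $1$ exactly when the index set is empty. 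Thus disjointness is equivalent to $1$ being a sum of four elements, one from each $J$, which is directly an existential condition --- no conversion of a universal statement is needed, and the step you flag as ``the hardest'' dissolves. The same sum-of-radicals identity (with two summands) is also what makes your first set in clause (2) diophantine, rather than the local-global repackaging you gesture at.
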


Theorem \ref{derasis} follows from Corollary \ref{jabjab}, Theorem \ref{triunfo}, and Proposition \ref{intvacia}, proved below. It will be a very powerful tool to describe subsets of number fields defined in terms of valuations over specific places. A more robust version of this method can also be used to describe Darmon points (see \cite[§2.4]{mitankin2023semiintegral}), which will be the subject of a future paper.

We will begin with Section \ref{introductiontocampanapoints} by defining $S$-Campana points over smooth proper varieties $X$ over a number field $K$ (where $S$ is a finite set of places of $K$ containing the archimedean ones) with respect to a $\mathbb{Q}$-divisor $D$ of $X$ in which all the coefficients have the form $1-\frac{1}{n}$ for $n\in\mathbb{Z}_{\geq 1}\cup\left\{+\infty\right\}$, and having strict normal crossings on $X$. We will then give an explicit description of Campana points in the case in which $X=\mathbb{P}^1_K$ and $D=\left(1-\frac{1}{n}\right)\left\{x_1=0\right\}$ for some $n\in\mathbb{Z}_{\geq 1}$, getting the set described in Theorem \ref{primobj}. We will then do the same when $n=+\infty$ and obtain the $S$-integers.

In Section \ref{preliminares} we will begin the objective of giving a first-order definition of these Campana points. In Sections \ref{quatalg} and \ref{hsym} we state the basic results on Quaternion Algebras and Hilbert Symbols we will make use of, and in Section \ref{imppdiof} we will apply them to construct the basic diophantine sets we will be combining to end up constructing Campana points.

In Section \ref{pruebadelteom} we prove Theorem \ref{derasis} and in Section \ref{primerdescripcion} we put all this together to build a first-order definition for Campana points. We conclude with Section \ref{explicito} by analyzing the explicit first-order formulas to count the number of quantifiers and bound the degrees of the defining polynomials, and Section \ref{mainqwe} in which we derive the analogous results for more general Campana points.

\section*{Acknowledgements}

I thank my supervisor Jennifer Park, principal investigator of two of the three grants that supported this research, and whose suggestions, proof-readings and comments were extremely valuable during the whole process of this paper. Thanks also to Eric Katz, the principal investigator of the grant under which I was funded during the last stage of this research.

I would also like to thank the Mathematical Sciences Research Institute for the invitation to present the first version of all these results in the program \emph{Definability, Decidability, and Computability in Number Theory, part 2} (UC Berkeley). Also the organizers of \emph{DDC@Cambridge: FRG Workshop on Definability, Decidability, and Computability} (Harvard University) for the possibility to present the advanced version.

Thanks to Gabe Conant, Nicolas Daans, Philip Dittmann, Arno Fehm, Hunter Handley, Hector Pastén, Stefan Patrikis, Marta Pieropan, Gabriela Pinto, Jakob Stix, and Caroline Terry, for the conversations they held with me, their suggestions on further reading and research, and their extremely useful and valuable insight and advices, comments, suggestions, and questions.

\section{Campana points}\label{introductiontocampanapoints}

\subsection{Definition of Campana Points}

We begin by defining the notion of \emph{Campana orbifold}, from which we will derive Campana points. See \cite[Section §3]{MR4307130} for more details.

\begin{definition}\label{camporb}Let $X$ be a smooth variety over a field $K$, and fix an effective Weil $\mathbb{Q}$-divisor $D$ on $X$. Moreover, assume $D=\displaystyle{\sum_{\alpha\in\mathcal{A}}\varepsilon_\alpha D_\alpha}$, where $\mathcal{A}$ is a finite set and, for each $\alpha\in\mathcal{A}$, $D_\alpha$ is a prime divisor, and $\varepsilon_\alpha\in \mathfrak{W}\coloneqq \left\{1-\frac{1}{n}:n\in\mathbb{Z}_{\geq 1}\right\}\cup\left\{1\right\}$. Also assume $D_{\red}\coloneqq\displaystyle{\sum_{\alpha\in\mathcal{A}}D_\alpha}$ is a divisor with strict normal crossings on $X$. We then say that the pair $\left(X,D\right)$ is a \emph{Campana orbifold}.

\end{definition}

Fix $K$, $X$, $D$, and $\mathcal{A}$ as in Definition \ref{camporb}. Moreover, assume further that $K$ is a number field and that $X$ is proper over $K$. Fix a finite subset $S$ of $\Omega_K$ containing $\Omega_K^\infty$. A \emph{model} of $\left(X,D\right)$ over $\mathcal{O}_{K,S}$ is, following \cite{MR4405664}, a pair $\left(\mathcal{X},\mathcal{D}\right)$, where $\mathcal{X}$ is a flat proper scheme over $\mathcal{O}_{K,S}$ with $\mathcal{X}_{\left(0\right)}\cong X$ (where $\mathcal{X}_{\left(0\right)}$ is the generic fiber of the morphism $\mathcal{X}\to\operatorname{Spec}\left(\mathcal{O}_{K,S}\right)$) and  $\mathcal{D}\coloneqq \displaystyle{\sum_{\alpha\in\mathcal{A}}\varepsilon_\alpha \mathcal{D}_\alpha}$, where for each $\alpha\in\mathcal{A}$, $\mathcal{D}_\alpha$ is the Zariski closure of $D_\alpha$ in $\mathcal{X}$ (we can make sense of this by using the topological embedding $\mathcal{X}_{\left(0\right)}\hookrightarrow \mathcal{X}$ and the isomorphism $\mathcal{X}_{\left(0\right)}\cong X\supseteq D_\alpha$). We let $\mathcal{D}_{\red}\coloneqq \displaystyle{\sum_{\alpha\in\mathcal{A}}\mathcal{D}_\alpha}$.

If $P\in X\left(K\right)\coloneqq \operatorname{Hom}_{\operatorname{Sch}}\left(\operatorname{Spec}\left(K\right),X\right)$, we get a commutative diagram\begin{equation*}% https://tikzcd.yichuanshen.de/#N4Igdg9gJgpgziAXAbVABwnAlgFyxMJZABgBpiBdUkANwEMAbAVxiRAB12I0YAnOnBF5g6AWxjAAyjwDGAX04MYAMxwAKANKdeWAOYALHAEoQc0uky58hFGQCMVWoxZtO3PgKEjxU2QvZKqmqcogL6MozAAPJyAPrAGqSS-joGxqbmIBjYeAREduSO9MysiCAAGpwyBLoABCFhEQzA5XHAiirqxNp6hkZyGRY51kQATIXUxS5lDTjhka2DWZa5NsjjDpPOpRxcPPyCwmIS0jDyHUGz880x8YnJPWkmco4wULrwRKDKvBCiSGQQIIkHYzN9fv9EIDgYhxk4SmwAApLH5-JBwmEAZmoODoWAYbH0EAgAGsURCkNigRAkAAWMEgVGQgrUunUBhYMA7OAQDlQEDUfQwOj8xBgJgMBg4vEEsqQLkCkAMOgAIxgDERKxGZVShkVUx2bn2niOPiwUAGDKZIJxNMQVI5CrKUDocCF-K2CJm7FCcyawERA3ZqvVmuGeR1vRwpgociAA
\begin{tikzcd}
\operatorname{Spec}\left(K\right) \arrow[d] \arrow[r, "P"]                                                               & X\cong \mathcal{X}_{\left(0\right)} \arrow[r, hook] & \mathcal{X} \arrow[d]                               \\
{\operatorname{Spec}\left(\mathcal{O}_{K,S}\right)} \arrow[rr, "\operatorname{id}"'] \arrow[rru, "\mathcal{P}"', dashed] &                                                     & {\operatorname{Spec}\left(\mathcal{O}_{K,S}\right)}
\end{tikzcd}\end{equation*}where the map $\mathcal{P}\in \mathcal{X}\left(\mathcal{O}_{K,S}\right)$ is induced by the valuative criterion of properness (see \cite[Theorem 4.7, p. 101]{MR0463157}).\footnote{This criterion can be applied because, even though $\mathcal{O}_{K,S}$ is not a valuation ring (it is not even a local ring), localizing at every place provides us with local rings over which the criterion can be applied to get local extensions of the point that can be glued to a global extension.} In other words, each $P\in X\left(K\right)$ induces some $\mathcal{P}\in\mathcal{X}\left(\mathcal{O}_{K,S}\right)$. For each $v\in\Omega_K\setminus S$, this and the inclusion $\mathcal{O}_{K,S}\subseteq\mathcal{O}_{K,v}$ give an element $\mathcal{P}_v\in \mathcal{X}\left(\mathcal{O}_{K,v}\right)$. Let $\mathcal{A}_\varepsilon\coloneqq \left\{\alpha\in\mathcal{A}:\varepsilon_\alpha\neq 0\right\}$ and $\displaystyle{X^0\coloneqq X\setminus \bigcup_{\alpha\in\mathcal{A}_\varepsilon}D_\alpha}$. If $\alpha\in\mathcal{A}$ and $\mathcal{P}_v\not\subseteq \mathcal{D}_\alpha$ (recall that $\mathcal{P}_v\in\mathcal{X}\left(\mathcal{O}_{K,v}\right)$ has image contained in $\mathcal{X}\supseteq \mathcal{D}_\alpha$), we define $n_v\left(\mathcal{D}_\alpha,P\right)$ (the \emph{intersection multiplicity of $P$ and $\mathcal{D}_\alpha$}) to be the colength of the non-zero ideal in $\mathcal{O}_{K,v}$ defined by the pullback of $\mathcal{D}_\alpha$ via $\mathcal{P}_v$. See \cite[Definition 3.4]{balestrieri2023campana} for more details.

When $P\in D_\alpha$ we define $n_v\left(\mathcal{D}_\alpha,P\right)\coloneqq+\infty$. Intersection numbers will allow us to impose bound restrictions in order to geometrically generalize $m$-full integers, for various $m\in\mathbb{Z}_{\geq 1}$, a notion that we now introduce as Campana points. More precisely:

\begin{definition}\label{camppoints}$P\in X\left(K\right)$ is a \emph{Campana $\mathcal{O}_{K,S}$-point} if $\displaystyle{P\in \left(X\setminus\bigcup_{\substack{\alpha\in\mathcal{A}\\\varepsilon_\alpha=1}}D_\alpha\right)\left(\mathcal{O}_{K,S}\right)}$ (that is, for all $\alpha\in\mathcal{A}$ such that $\varepsilon_\alpha=1$, we have $n_v\left(\mathcal{D}_\alpha,P\right)=0$ for all $v\in\Omega_K\setminus S$), and for all $\alpha\in\mathcal{A}$ with $\varepsilon_\alpha<1$ and all $v\in\Omega_K\setminus S$ we have $n_v\left(\mathcal{D}_\alpha,P\right)^2\geq \frac{1}{1-\varepsilon_\alpha}n_v\left(\mathcal{D}_\alpha,P\right)$.

\end{definition}

\subsection{Campana points over number fields}

In this section we will carry out the above definitions to explictly characterize Campana points in projective spaces over number fields. The usual first example of Campana points is given over projective spaces over $\mathbb{Q}$ (see \cite[Section §3.2.1]{MR4307130}). We generalize that idea to arbitrary number fields. In order to do that, we need to tackle the difficulty that integers on number fields are not necessarily principal ideal domains, which makes projective points more difficult to characterize. We use the following lemma.

\begin{lemma}\label{ellem}Let $K$ be a number field, let $S$ be a finite subset of $\Omega_K$ containing $\Omega_K^{\infty}$, and let $x_0,x_1\in \mathcal{O}_{K,S}$ be such that $x_1\neq 0$. Fix $\mathfrak{p}\in\Omega_K\setminus S$ and assume $\frac{x_0}{x_1}=\frac{a}{b}\in K_\mathfrak{p}$, where $a,b\in \mathcal{O}_{K,\mathfrak{p}}$ are relatively prime in $\mathcal{O}_{K,\mathfrak{p}}$. Then the exponent of $\mathfrak{p}$ in the factorization of the fractional ideal $\left(x_1\right)\left(x_0,x_1\right)^{-1}$ is $\nu_{\mathfrak{p}}\left(b\right)$.

\end{lemma}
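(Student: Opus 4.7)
The plan is to reduce the statement to a local valuation computation at $\mathfrak{p}$, leveraging that $\mathcal{O}_{K,S}$ is a Dedekind domain (so fractional ideals factor uniquely as products of primes $\mathfrak{q}\notin S$) and that $\mathcal{O}_{K,\mathfrak{p}}$ is a discrete valuation ring.

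First I would observe that for any finitely generated fractional ideal $(y_1,\ldots,y_k)$ of $\mathcal{O}_{K,S}$ and any $\mathfrak{q}\notin S$, the exponent of $\mathfrak{q}$ in its prime factorization equals $\min_i\nu_\mathfrak{q}(y_i)$. Applied to $(x_0,x_1)$ and combined with the exponent of $(x_1)$, this yields that the exponent of $\mathfrak{p}$ in $(x_1)(x_0,x_1)^{-1}$ is
\[
\nu_\mathfrak{p}(x_1)-\min\bigl(\nu_\mathfrak{p}(x_0),\nu_\mathfrak{p}(x_1)\bigr)=\max\bigl(0,\;\nu_\mathfrak{p}(x_1)-\nu_\mathfrak{p}(x_0)\bigr).
\]
So it remains to verify that this maximum is exactly $\nu_\mathfrak{p}(b)$.

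For this step, I would invoke the coprimality of $a$ and $b$ in the DVR $\mathcal{O}_{K,\mathfrak{p}}$, which is equivalent to $\min\bigl(\nu_\mathfrak{p}(a),\nu_\mathfrak{p}(b)\bigr)=0$, i.e.\ at least one of $a,b$ is a $\mathfrak{p}$-adic unit. The hypothesis $x_0/x_1=a/b$ in $K_\mathfrak{p}$ translates into $\nu_\mathfrak{p}(x_1)-\nu_\mathfrak{p}(x_0)=\nu_\mathfrak{p}(b)-\nu_\mathfrak{p}(a)$. I then finish by a short case split: if $\nu_\mathfrak{p}(b)=0$ then $\nu_\mathfrak{p}(a)\geq 0$, making the right-hand side non-positive, so the desired maximum is $0=\nu_\mathfrak{p}(b)$; if $\nu_\mathfrak{p}(b)>0$ then coprimality forces $\nu_\mathfrak{p}(a)=0$, so the right-hand side is $\nu_\mathfrak{p}(b)>0$ and the maximum is once more $\nu_\mathfrak{p}(b)$.

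There is no serious obstacle in this proof; the only step that deserves a moment of attention is the compatibility between the $\mathfrak{p}$-component of an $\mathcal{O}_{K,S}$-fractional ideal and the corresponding $\mathcal{O}_{K,\mathfrak{p}}$-valuation, which is the standard local-global consequence of Dedekind factorization. Everything else is bookkeeping of $\mathfrak{p}$-adic valuations.
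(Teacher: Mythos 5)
Your argument is correct and is essentially the paper's own proof: both reduce to a local computation in the discrete valuation ring $\mathcal{O}_{K,\mathfrak{p}}$, where the exponent of $\mathfrak{p}$ in $\left(x_0,x_1\right)$ is $\min\left(\nu_\mathfrak{p}\left(x_0\right),\nu_\mathfrak{p}\left(x_1\right)\right)=\nu_\mathfrak{p}\left(d\right)$ for a local gcd $d$. The paper finishes by noting that $b$ and $x_1/d$ differ by a unit (uniqueness of reduced fractions in a UFD), while you finish with an equivalent valuation-theoretic case split on $\nu_\mathfrak{p}\left(b\right)$; the two closing steps carry the same content.
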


\begin{proof}Since $\mathcal{O}_{K,\mathfrak{p}}$ is a principal ideal domain, the ideal $\left(x_0,x_1\right)$ in $\mathcal{O}_{K,\mathfrak{p}}$ is generated by a greatest common divisor (unique up to unit scaling) $d$ of $x_0$ and $x_1$, thus the exponent of $\mathfrak{p}$ in the factorization of the fractional ideal $\left(x_1\right)\left(x_0,x_1\right)^{-1}$ equals $\nu_\mathfrak{p}\left(\frac{x_1}{d}\right)$, which equals $\nu_\mathfrak{p}\left(b\right)$ because $b$ and $\frac{x_1}{d}$ differ by a unit factor (both $\frac{a}{b}$ and $\frac{x_0/d}{x_1/d}$ are a reduced form of the fraction $\frac{x_0}{x_1}$ in the unique factorization domain $\mathcal{O}_{K,\mathfrak{p}}$). $\blacksquare$

\end{proof}

Assume $K$ is a number field, fix a finite subset $S$ of $\Omega_K$ containing $\Omega_K^\infty$ and assume $X\coloneqq \mathbb{P}^n_K$ for some $n\in\mathbb{Z}_{\geq 1}$. Fix $k\in\left[0,n\right]\cap\mathbb{Z}$ and for each $i\in\left[k,n\right]\cap\mathbb{Z}$ let $m_i\in\mathbb{Z}_{\geq 1}$, $\varepsilon_i\coloneqq 1-\frac{1}{m_i}\in\mathfrak{W}$, and $D_i\coloneqq \left\{x_i=0\right\}$, so that $\displaystyle{\left(X,\sum_{i=k}^n\varepsilon_iD_i\right)}$ is a Campana orbifold having an integral model over $\mathcal{O}_{K,S}$ given by $\displaystyle{\left(\mathcal{X},\sum_{i=k}^n\varepsilon_i\mathcal{D}_i\right)}$, where $\mathcal{X}\coloneqq \mathbb{P}^n_{\mathcal{O}_{K,S}}$.

Now fix $P\in X\left(K\right)$ and write it as $P=\left(x_0:\cdots :x_n\right)$, where $x_0,\cdots, x_n\in\mathcal{O}_{K,S}$. If $\mathfrak{p}\in \Omega_K^{<\infty}$, then since $\mathcal{O}_{K,\mathfrak{p}}$ is a principal ideal domain, we can write $P=\left(x_{0,\mathfrak{p}},\cdots,x_{n,\mathfrak{p}}\right)\in \mathbb{P}^n_{K_\mathfrak{p}}\left(K_\mathfrak{p}\right)$, where for each $i\in\left[0,n\right]\cap\mathbb{Z}$ we have $x_{i,\mathfrak{p}}\in\mathcal{O}_{K,\mathfrak{p}}$ and $\operatorname{gcd}\left(x_{0,\mathfrak{p}},\cdots,x_{n,\mathfrak{p}}\right)=1$ (equivalently, these elements generate the unit ideal in $\mathcal{O}_{K,\mathfrak{p}}$, because on principal ideal domains, maximal ideals are exactly the ones generated by a single irreducible element). Moreover, this representation is unique up to scalar multiplication by an element of $\mathcal{O}_{K,\mathfrak{p}}^\times$. For each $\mathfrak{p}\in \Omega_K\setminus S$ and $i\in\left[k,n\right]\cap\mathbb{Z}$ we have $n_\mathfrak{p}\left(\mathcal{D}_i,P\right)=\nu_\mathfrak{p}\left(x_{i,\mathfrak{p}}\right)$ (that is, the colength of the ideal of $\mathcal{O}_{K,\mathfrak{p}}$ generated by $x_{i,\mathfrak{p}}$). We conclude that $P$ is a Campana $\mathcal{O}_{K,S}$-point in $\displaystyle{\left(\mathcal{X},\sum_{i=k}^n\varepsilon_i\mathcal{D}_i\right)}$ if and only if for each $i\in\left[0,n\right]\cap\mathbb{Z}$ and $\mathfrak{p}\in\Omega_K\setminus S$ we have $\nu_{\mathfrak{p}}\left(x_{j,\mathfrak{p}}\right)^2\geq \frac{1}{1-\varepsilon_j}\nu_{\mathfrak{p}}\left(x_{j,\mathfrak{p}}\right)=m_j\nu_{\mathfrak{p}}\left(x_{j,\mathfrak{p}}\right)$ for \mbox{all $j\in\left[k,n\right]\cap\mathbb{Z}$}.

In particular, set $n=k=1$ in the above example, so that \mbox{$P=\left(x_0:x_1\right)\in\mathbb{P}^1_K\left(K\right)$} with $x_0,x_1\in\mathcal{O}_{K,S}$, and our Campana orbifold is $\left(\mathbb{P}^1_{K},\left(1-\frac{1}{m_1}\right)\left\{x_1=0\right\}\right)$. Identifying $K$ with $\left\{\left(\lambda:1\right):\lambda\in K\right\}$ and using Lemma \ref{ellem} we get that $P=\frac{x_0}{x_1}\in K$ is Campana if and only if\[\text{$\nu_{\mathfrak{p}}\left(\left(x_1\right)\left(x_0,x_1\right)^{-1}\right)^2\geq m_1\nu_{\mathfrak{p}}\left(\left(x_1\right)\left(x_0,x_1\right)^{-1}\right)$ for all $\mathfrak{p}\in \Omega_K\setminus S$}\](observe that, for any such $\mathfrak{p}$, the number $\nu_{\mathfrak{p}}\left(\left(x_1\right)\left(x_0,x_1\right)^{-1}\right)$ is independent of the choice of $S$-integral projective coordinates for $P$). To characterize these elements of $K$, we prove:

\begin{lemma}\label{ellem2}Let $K$ be a number field, let $S$ be a finite subset of $\Omega_K$ containing $\Omega_K^{\infty}$, and fix $r\in K^\times$, $\mathfrak{p}\in\Omega_K\setminus S$, and $n\in\mathbb{Z}_{\geq 1}$. Let $x_0,x_1\in \mathcal{O}_{K,S}$ be such that $r=\frac{x_0}{x_1}$. Then $\nu_\mathfrak{p}\left(r\right)\in \mathbb{Z}_{\geq 0}\cup \mathbb{Z}_{\leq -n}$ if and only if $\nu_{\mathfrak{p}}\left(\left(x_1\right)\left(x_0,x_1\right)^{-1}\right)^2\geq n\nu_{\mathfrak{p}}\left(\left(x_1\right)\left(x_0,x_1\right)^{-1}\right)$.

\end{lemma}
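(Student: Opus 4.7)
My plan is to reduce the statement to an elementary fact about valuations by invoking the previous Lemma \ref{ellem}. Write $r=a/b$ in $K_\mathfrak{p}$ with $a,b\in\mathcal{O}_{K,\mathfrak{p}}$ coprime (possible since $\mathcal{O}_{K,\mathfrak{p}}$ is a DVR, hence a UFD). Set $e:=\nu_\mathfrak{p}\!\left((x_1)(x_0,x_1)^{-1}\right)$. By Lemma \ref{ellem}, $e=\nu_\mathfrak{p}(b)$. Because $a,b$ are coprime in the DVR $\mathcal{O}_{K,\mathfrak{p}}$, the uniformizer of $\mathfrak{p}$ cannot divide both, so at least one of $\nu_\mathfrak{p}(a),\nu_\mathfrak{p}(b)$ vanishes.

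First I would handle the case $e=0$. Then $b$ is a $\mathfrak{p}$-unit, so $\nu_\mathfrak{p}(r)=\nu_\mathfrak{p}(a)\in\mathbb{Z}_{\geq 0}$, hence $\nu_\mathfrak{p}(r)\in\mathbb{Z}_{\geq 0}\cup\mathbb{Z}_{\leq -n}$ holds automatically. On the other side, the inequality $e^2\geq ne$ reads $0\geq 0$, which also holds. So both conditions are satisfied in this case.

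Next I would handle the case $e\geq 1$. Then $b$ is a non-unit, so by coprimality $a$ must be a unit, giving $\nu_\mathfrak{p}(a)=0$ and $\nu_\mathfrak{p}(r)=-e<0$. The condition $\nu_\mathfrak{p}(r)\in\mathbb{Z}_{\geq 0}\cup\mathbb{Z}_{\leq -n}$ therefore reduces to $-e\leq -n$, i.e.\ $e\geq n$. On the other hand, since $e>0$, dividing the inequality $e^2\geq ne$ by $e$ shows it is equivalent to $e\geq n$. Hence the two conditions are equivalent here as well.

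There is no real obstacle: once Lemma \ref{ellem} is invoked to translate the ideal-theoretic exponent into $\nu_\mathfrak{p}(b)$, the whole statement collapses to a short two-case argument using only the elementary fact that coprime elements of a DVR cannot share a positive valuation. The only point requiring mild care is remembering to justify, when $e=0$, why $\nu_\mathfrak{p}(r)\geq 0$ (needed to ensure $\nu_\mathfrak{p}(r)$ really lands in $\mathbb{Z}_{\geq 0}\cup\mathbb{Z}_{\leq -n}$), and, when $e\geq 1$, why $a$ must be a unit, both of which follow directly from coprimality in $\mathcal{O}_{K,\mathfrak{p}}$.
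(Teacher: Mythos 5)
Your proof is correct and follows essentially the same route as the paper: invoke Lemma \ref{ellem} to identify the ideal-theoretic exponent with $\nu_\mathfrak{p}(b)$ for a coprime representation $r=a/b$ in $\mathcal{O}_{K,\mathfrak{p}}$, then exploit $\nu_\mathfrak{p}(a)\nu_\mathfrak{p}(b)=0$ in a short case analysis. The only difference is organizational (you split on $e=0$ versus $e\geq 1$ and verify both sides of the equivalence in each case, while the paper proves the two implications separately), which does not change the substance.
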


\begin{proof}Let $d\in\mathcal{O}_{K,S}$ be a generator of the ideal $\left(x_0,x_1\right)$ of $\mathcal{O}_{K,\mathfrak{p}}$, let $a\coloneqq \frac{x_0}{d}$ and $b\coloneqq \frac{x_1}{d}$. By Lemma \ref{ellem} we have $\nu_{\mathfrak{p}}\left(\left(x_1\right)\left(x_0,x_1\right)^{-1}\right)=\nu_{\mathfrak{p}}\left(b\right)$, so we want to show that\[\text{$\nu_\mathfrak{p}\left(r\right)\in \mathbb{Z}_{\geq 0}\cup \mathbb{Z}_{\leq -n}$ if and only if $\nu_{\mathfrak{p}}\left(b\right)^2\geq n\nu_{\mathfrak{p}}\left(b\right)$.}\]Assume $\nu_\mathfrak{p}\left(r\right)\in \mathbb{Z}_{\geq 0}\cup \mathbb{Z}_{\leq -n}$. It follows that $\nu_\mathfrak{p}\left(a\right)-\nu_\mathfrak{p}\left(b\right)\in \mathbb{Z}_{\geq 0}\cup \mathbb{Z}_{\leq -n}$, because $r=\frac{a}{b}\in K_\mathfrak{p}$. Since $a$ and $b$ are relatively prime in $\mathcal{O}_{K,\mathfrak{p}}$ we have $\nu_\mathfrak{p}\left(a\right)\nu_\mathfrak{p}\left(b\right)=0$. If $\nu_\mathfrak{p}\left(a\right)-\nu_\mathfrak{p}\left(b\right)\in \mathbb{Z}_{\geq 0}$, which is the same as $\nu_\mathfrak{p}\left(a\right)\geq \nu_\mathfrak{p}\left(b\right)$, necessarily we must have $\nu_\mathfrak{p}\left(b\right)=0$; while if $\nu_\mathfrak{p}\left(a\right)-\nu_\mathfrak{p}\left(b\right)\in \mathbb{Z}_{\geq -n}$, which is the same as $\nu_\mathfrak{p}\left(a\right)+n\leq \nu_\mathfrak{p}\left(b\right)$, we must have $\nu_\mathfrak{p}\left(b\right)\geq n$. In both cases we get $\nu_{\mathfrak{p}}\left(b\right)^2\geq n\nu_{\mathfrak{p}}\left(b\right)$, as desired.

Conversely, assume $\nu_{\mathfrak{p}}\left(b\right)^2\geq n\nu_{\mathfrak{p}}\left(b\right)$. If $\nu_{\mathfrak{p}}\left(b\right)=0$ then $r=\frac{a}{b}$ is integral with respect to $\mathfrak{p}$, giving $\nu_\mathfrak{p}\left(r\right)\geq 0$. If $\nu_{\mathfrak{p}}\left(b\right)\neq 0$ then $\nu_{\mathfrak{p}}\left(a\right)=0$ (recall that $\nu_\mathfrak{p}\left(a\right)\nu_\mathfrak{p}\left(b\right)=0$) and $\nu_{\mathfrak{p}}\left(b\right)\geq n$, hence\[\nu_\mathfrak{p}\left(r\right)=\nu_\mathfrak{p}\left(a\right)-\nu_\mathfrak{p}\left(b\right)=-\nu_\mathfrak{p}\left(b\right)\in\mathbb{Z}_{\leq -n},\]completing the proof. $\blacksquare$

\end{proof}

Therefore, given any $n\in\mathbb{Z}_{\geq 1}$, the affine $\mathcal{O}_{K,S}$-Campana points of $\mathbb{P}^1_K$ with respect to $\left(1-\frac{1}{n}\right)\left\{x_1=0\right\}$ are precisely the elements of the set\[C_{K,S,n}\coloneqq \left\{0\right\}\cup\left\{r\in K^\times:\forall \mathfrak{p}\in \Omega_K\setminus S\left(\nu_\mathfrak{p}\left(r\right)\in \mathbb{Z}_{\geq 0}\cup \mathbb{Z}_{\leq -n}\right)\right\}.\]From this we immediately obtain $C_{K,S,1}=K$, $C_{K,S,j+1}\subseteq C_{K,S,j}$ for each $j\in\mathbb{Z}_{\geq 1}$, and\[\bigcap_{j\in\mathbb{Z}_{\geq 1}}C_{K,S,j}=\bigcap_{\mathfrak{p}\in \Omega_K\setminus S}\left(\mathcal{O}_{K}\right)_\mathfrak{p}=\mathcal{O}_{K,S}.\]Thus, the natural motivation for studying Campana points for the sake of Hilbert's Tenth problem is that they provide a set-theoretical filtration between a number field and its ring of $S$-integers.

To conclude, we show that $S$-integers also arise as Campana points. Indeed, observe that if we look for Campana points with respect to the divisor $\left\{x_1=0\right\}$ in $K$, we look for fractions of the form $\frac{x_0}{x_1}$, with $x_0,x_1\in\mathcal{O}_{K,S}$ such that, for each $\mathfrak{p}\in\Omega_K\setminus S$, if $d$ is the generator of the ideal $\left(x_0,x_1\right)$ in $\mathcal{O}_{K,\mathfrak{p}}$, then $\nu_\mathfrak{p}\left(\frac{x_1}{d}\right)=0$; which is the same as saying $\nu_\mathfrak{p}\left(\frac{x_0}{x_1}\right)\geq 0$. So this is nothing but the set of $S$-integers of $K$. As mentioned in the introduction, much has been done to understand $S$-integers from a first-order point of view, so in this paper we will mainly focus on the other stages of this filtration.

\section{Quaternion Algebras, Hilbert Symbol, and Diophantine sets}
\label{preliminares}

We begin with two sections exposing the most basic results we will use from the theory of Hilbert Symbols and Central Simple Algebras, then we explain the connection between these two concepts, and proceed to build up our setup to prove our desired first-order characterizations.

\subsection{Quaternion algebras}\label{quatalg}

Given a field $k$ of characteristic different from $2$, a \emph{quaternion algebra} over $k$ is a $k$-algebra of the form $H_{a,b,k}\coloneqq \displaystyle{\bigoplus_{i,j\in \left\{0,1\right\}}k\alpha^i\beta^j}$ (where $\alpha$ and $\beta$ are formal symbols), with multiplication defined as $\alpha^2\coloneqq a$, $\beta^2\coloneqq b$, $\alpha\beta=-\beta\alpha$, where $a$ and $b$ are fixed nonzero elements of $k$. It is said to be \emph{split} if there exists a $k$-algebra isomorphism $H_{a,b,k}\cong M_2\left(k\right)$.

Over $H_{a,b,k}$, the \emph{reduced norm} of an element $x_1+x_2\alpha +x_3\beta+x_4\alpha\beta$ (where $x_1,x_2,x_3,x_4\in k$) is defined as $x_1^2-ax_2^2-bx_3^2+abx_4^2$, while the \emph{reduced trace} of such an element is defined as $2x_1$.

The following fact, proven in \cite[Proposition 1.3.2]{MR3727161}, will be the key to connect quaternion algebras with Hilbert Symbols, introduced in the next section.

\begin{lemma}\label{connect}If $k$ is a field, $\operatorname{Char}\left(k\right)\neq 2$, and $a,b\in k^\times$, then $H_{a,b,k}$ is split if and only if $ax^2+by^2=z^2$ has a nontrivial solution in $k$.\end{lemma}

\subsection{Hilbert Symbols}\label{hsym}

Given a field $k$, the (quadratic) \emph{Hilbert Symbol} is the function\[\left(-,-\right)_k:k^\times\times k^\times\to \left\{\pm 1\right\}\]defined as\[\left(a,b\right)_k\coloneqq\begin{cases}1,&z^2-ax^2-by^2=0\text{ has a nontrivial solution in $k$},\\-1,&\text{otherwise}.\end{cases}\]If $K$ is a number field and $v\in\Omega_K$, we denote by $\left(-,-\right)_v$ the quadratic Hilbert Symbol of $K_v$.

We state two facts on Hilbert Symbols. The first is a particular case of \mbox{\cite[XIV §2, Proposition 7]{MR0554237}} (see also \cite[§III, Theorem 2]{MR0344216}), and the second is \cite[Theorem 3.7]{MR3207365}.

\begin{proposition}\label{lin}If $k$ is a local field of characteristic different from $2$ and $a\in k^\times$, then $\left(a,b\right)_k=1$ for all $b\in k^\times$ if and only if $a$ is a square in $k$.

\end{proposition}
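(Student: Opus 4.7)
My plan is to treat the two directions separately. The forward direction is immediate: if $a = c^2$ for some $c \in k^\times$, then for any $b \in k^\times$, the triple $(x,y,z) = (1,0,c)$ is a nontrivial solution to $z^2 - ax^2 - by^2 = 0$, so $(a,b)_k = 1$.

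For the reverse (contrapositive) direction, I would show that for any non-square $a \in k^\times$, there exists $b \in k^\times$ with $(a,b)_k = -1$. The key observation is that $(a,b)_k = 1$ is equivalent to $b$ being a norm from the quadratic extension $L := k(\sqrt{a})$: using $N_{L/k}(y + z\sqrt{a}) = y^2 - az^2$, a nontrivial solution $(x,y,z)$ to $z^2 - ax^2 - by^2 = 0$ with $y \neq 0$ rearranges to $b = N_{L/k}\bigl((z/y) + (x/y)\sqrt{a}\bigr)$, while the case $y = 0$ forces $a$ to be a square (contradicting our assumption), so $y \neq 0$ can always be arranged. Thus the statement reduces to showing that for $a$ non-square in $k$, the norm map $N_{L/k} \colon L^\times \to k^\times$ fails to be surjective.

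For the non-surjectivity, I would appeal to the standard local-field fact that for a finite abelian extension $L/k$ of local fields, $[k^\times : N_{L/k}(L^\times)] = [L:k]$ (a consequence of local class field theory, and the content of Serre \cite[XIV §2, Proposition 7]{MR0554237}). In our case $[L:k] = 2$, so there is $b \in k^\times \setminus N_{L/k}(L^\times)$, which then satisfies $(a,b)_k = -1$.

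If one prefers a class-field-theory-free argument, the non-surjectivity of the norm can be verified by case analysis. In the archimedean cases $k \cong \mathbb{C}$ the claim is vacuous (every element is a square), and $k \cong \mathbb{R}$ reduces to checking that $(-1,-1)_{\mathbb{R}} = -1$ via the non-existence of nontrivial real solutions to $z^2 + x^2 + y^2 = 0$. For $k$ non-archimedean with residue characteristic different from $2$, Hensel's lemma yields $|k^\times/(k^\times)^2| = 4$ with representatives $\{1, u, \pi, u\pi\}$ for a uniformizer $\pi$ and non-square unit $u$, and one computes the $4 \times 4$ table of Hilbert symbols directly. The main obstacle is the dyadic case (residue characteristic $2$), where the quotient $k^\times/(k^\times)^2$ is larger and controlled by the filtration $1 + \mathfrak{p}^n$ together with the structure of the logarithm; this is where Serre's proof does real work, and I would simply cite it rather than reproduce it.
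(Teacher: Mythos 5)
Your argument is correct, and it matches the paper, which offers no proof of its own for this proposition but simply quotes it as a special case of Serre's \emph{Local Fields} XIV \S 2, Proposition 7. Your reduction of $(a,b)_k=1$ (for $a$ a non-square) to $b\in N_{L/k}(L^\times)$ with $L=k(\sqrt{a})$, followed by the local norm-index theorem $[k^\times:N_{L/k}(L^\times)]=[L:k]=2$, is precisely the standard content behind that citation, and your case-by-case fallback (archimedean, non-dyadic, dyadic) is the usual elementary route.
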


\begin{theorem}\label{epsiv2}Let $K$ be a number field, let $I$ be a finite set and fix $\left\{a_i\right\}_{i\in I}\subseteq K^\times$. For each $\left(i,v\right)\in I\times \Omega_K$ fix $\varepsilon_{i,v}\in\left\{-1,1\right\}$. The following are equivalent:

\begin{itemize}

\item There exists $x\in K^\times$ such that $\left(a_i,x\right)_v=\varepsilon_{i,v}$ for all $\left(i,v\right)\in I\times \Omega_K$.

\item All these conditions hold:

\begin{enumerate}

\item $\varepsilon_{i,v}=1$ for all but finitely many $\left(i,v\right)\in I\times \Omega_K$.

\item For all $i\in I$ we have $\displaystyle{\prod_{v\in \Omega_K}\varepsilon_{i,v}=1}$.

\item For all $v\in \Omega_K$ there exists $x_v\in K^\times$ such that $\left(a_i,x_v\right)_v=\varepsilon_{i,v}$ for all $i\in I$.

\end{enumerate}

\end{itemize}

\end{theorem}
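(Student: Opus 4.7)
The plan is to establish the two implications separately, with the forward direction being essentially formal and the backward direction carrying the substantive content.

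For the forward direction, assume a global $x \in K^\times$ realizes the prescribed Hilbert symbols. Condition (1) follows from the standard fact that $(a,b)_v = 1$ whenever $v$ is a non-dyadic finite place at which both $a$ and $b$ are units: every non-degenerate binary quadratic form over a finite field of odd characteristic is isotropic, so the conic $z^2 = a X^2 + b Y^2$ has a non-trivial $\mathbb{F}_v$-point, which lifts by Hensel's lemma. Only finitely many places $v$ fail this test for some pair $(a_i, x)$, which yields (1). Condition (2) is Hilbert's reciprocity law, the global product formula for the Hilbert symbol, which one can view as a consequence of Artin reciprocity. Condition (3) is immediate: take $x_v := x$.

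The backward direction is where the work lies, and my strategy is to translate it via class field theory. Each $a_i \in K^\times$ determines a (possibly trivial) quadratic extension $K(\sqrt{a_i})/K$ and thereby an order-two character $\chi_i$ of the idele class group, with the identification $(a_i, x)_v = \chi_{i,v}(x)$ coming from the local norm residue symbol. The task becomes that of finding $x \in K^\times$ realizing prescribed local values of finitely many global quadratic characters. Let $T \supseteq \Omega_K^\infty$ be a finite set of places containing every $v$ for which some $\varepsilon_{i,v} \neq 1$ (finite by (1)) together with all ramified places of the $\chi_i$. Condition (3) supplies local elements $x_v$ at each $v \in T$, and by weak approximation together with the continuity of the Hilbert symbol in each argument modulo squares, one obtains $y \in K^\times$ with $(a_i, y)_v = \varepsilon_{i,v}$ for every $i \in I$ and every $v \in T$. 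It remains to correct $y$ outside $T$ by multiplying by an auxiliary $z \in K^\times$ which is a square at every $v \in T$ (preserving the symbols at $T$) and whose symbols against the $a_i$ realize the needed discrepancies $\varepsilon_{i,v} (a_i, y)_v^{-1}$ at the remaining places.

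The main obstacle is the global consistency needed for such a $z$ to exist, since reciprocity forces $\prod_v (a_i, z)_v = 1$ for every $i \in I$. This is precisely where condition (2) becomes essential: the prescribed data $\{\varepsilon_{i,v}\}$ is reciprocity-compatible, so the discrepancies asked of $z$ are themselves compatible with the product formula, and a Chebotarev density argument applied to the compositum $K(\sqrt{a_i} : i \in I)$ produces the required $z$ by exhibiting primes with prescribed Frobenius classes. Organizing this Chebotarev argument so that every target symbol is matched simultaneously while preserving the values at $T$ is the technical core of the proof, and is a standard ingredient in class-field-theoretic treatments of the Hasse--Minkowski theorem.
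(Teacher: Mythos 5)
First, a point of reference: the paper does not prove this statement at all. It is quoted verbatim as Theorem 3.7 of Park's paper \cite{MR3207365}, which in turn is the number-field generalization of the classical theorem of Serre (\emph{A Course in Arithmetic}, Ch.\ III, Theorem 4) on the existence of rationals with prescribed Hilbert symbols. So there is no in-paper argument to compare against; your proposal has to be judged on its own. Your overall architecture is the standard one and is sound: the forward direction via unramifiedness of the symbol at good places, Hilbert reciprocity, and restriction; the backward direction by first matching the symbols on a finite set $T$ by weak approximation (using that $\left(K_v^\times\right)^2$ is open), and then correcting outside $T$ by an auxiliary element $z$ whose required local symbols are reciprocity-consistent precisely because of condition (2).

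The genuine gap is in the final step, which you correctly identify as the technical core but then dispose of by invoking ``a Chebotarev density argument applied to the compositum $K\left(\sqrt{a_i}:i\in I\right)$.'' Chebotarev in that multiquadratic extension only produces prime ideals $\mathfrak{q}$ at which each $a_i$ is or is not a local square as prescribed; it does not produce the global \emph{element} $z$ you need, namely one that is a square at every $v\in T$, has prescribed odd/even valuation and residue behaviour at the finitely many discrepancy places, unit and symbol-trivial elsewhere except possibly at $\mathfrak{q}$. Over $\mathbb{Q}$ this passage from a prime to an element is free (every ideal is principal with an essentially unique positive generator, and Dirichlet's theorem already builds in the congruence conditions), which is why Serre's proof reads the way it does. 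Over a general number field one must additionally force $\mathfrak{q}$ (times the product of the discrepancy primes) to be principal with a generator lying in prescribed square classes at $T$ and prescribed residue classes at the discrepancy places; this requires applying Chebotarev not to $K\left(\sqrt{a_i}:i\in I\right)$ alone but to its compositum with a suitable ray class field with modulus supported on $T$ and the discrepancy places (equivalently, choosing $\mathfrak{q}$ with trivial class in that ray class group). There is also a latent circularity to guard against: the list of conditions you impose on $z$ has exactly the shape of the theorem being proved, so the construction must be arranged so that all but one of the local symbols of $z$ are forced by valuation and residue conditions, with the last one then granted for free by reciprocity. As written, the proposal does not close this step.
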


\subsection{Relevant diophantine sets}\label{imppdiof}In this section we will formally define diophantine sets and then list and construct several of them.

\begin{definition}Given any unital commutative ring $R$ and $n\in\mathbb{Z}_{\geq 1}$, we say that a given set $A\subseteq R^n$ is \emph{diophantine over $R$}, or \emph{first-order existentially defined over $R$} (or simply "existentially defined") if there exists $m\in\mathbb{Z}_{\geq 0}$ and $P\in R\left[X_1,\cdots,X_m,Y_1,\cdots, Y_n\right]$ such that, for any $a=\left(a_1,\cdots,a_n\right)\in R^n$, we have $a\in A$ if and only if there exist $x_1,\cdots,x_m\in R$ such that $P\left(x_1,\cdots,x_m,a_1,\cdots,a_n\right)=0$. In other words, $A$ is the set-theoretical projection of the set of solutions to a given polynomial with coefficients in $R$.\footnote{In model-theoretic terms, this means that $A$ is defined by an existential first-order formula with parameters in $R$ whose quantifier-free part is a polynomial equality. We can express this by the notation $R\vDash\exists x_1\cdots \exists x_m\left(P\left(x_1,\cdots,x_m,a_1,\cdots,a_n\right)=0\right)$.} We say that $A$ is \emph{first-order universally defined over $R$} (or simply "universally defined") if $R^n\setminus A$ is diophantine over $R$.\footnote{Equivalently in model-theoretic terms, it can be defined by a universal first-order formula (with parameters in $K$) whose quantifier-free part is the negation of a polynomial equality.}
\end{definition}

When constructing diophantine or universal sets in terms of other such sets, we need to be able to reduce a system of polynomial equations to a single equation. The following fact will allow us to do this, while keeping track of the degrees of the involved polynomials.

\begin{lemma}\label{intofdiof}Let $K$ be a number field and let $f$ and $g$ be two polynomials in any (finite) number of variables. Then there exists a polynomial $F$ of degree at most $2\max\left\{\deg\left(f\right),\deg\left(g\right)\right\}$ whose zeros are exactly the common zeros of $f$ and $g$.

\end{lemma}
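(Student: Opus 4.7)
The plan is to construct $F$ as an anisotropic binary quadratic combination of $f$ and $g$. Explicitly, I would set
\[F \coloneqq f^2 - c\,g^2,\]
where $c\in K^\times$ is a fixed element that is not a square in $K$. With this choice, the degree bound is immediate, since $\deg(f^2) = 2\deg f$ and $\deg(cg^2)=2\deg g$, hence $\deg F \leq 2\max\{\deg f,\deg g\}$.

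To see that such a non-square $c$ actually exists in $K$, I would pick any finite place $\mathfrak{p}\in\Omega_K^{<\infty}$ and any $r\in K^\times$ with $\nu_\mathfrak{p}(r)=1$ (for instance, the image in $K$ of a uniformizer, using that $K$ is dense in $K_\mathfrak{p}$, or simply any element of $\mathfrak{p}(\mathcal{O}_K)_\mathfrak{p}\cap K$ of $\mathfrak{p}$-valuation exactly $1$). Every square in $K^\times$ has even valuation at every finite place, so $r$ cannot be a square. Take $c \coloneqq r$.

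Verification of the zero set is then routine. One inclusion is trivial: any common zero of $f$ and $g$ is a zero of $F$. Conversely, if $F(x_0)=0$ at some tuple $x_0$, then $f(x_0)^2 = c\,g(x_0)^2$. Were $g(x_0)\neq 0$, we could write $c = \left(f(x_0)/g(x_0)\right)^2$, contradicting the choice of $c$ as a non-square. Therefore $g(x_0)=0$, and then $f(x_0)^2 = 0$ forces $f(x_0) = 0$.

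There is essentially no hard step in this lemma; it is a packaging of the standard trick of replacing a system $\{f=0,\ g=0\}$ by the single equation $f^2 - c g^2 = 0$ when $K$ admits a non-square. The only subtlety worth recording is precisely the existence of a non-square in a number field, which (unlike for quadratically closed fields such as $\mathbb{C}$) is guaranteed by the existence of any finite place together with the parity obstruction on valuations of squares.
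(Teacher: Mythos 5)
Your proof is correct and follows essentially the same route as the paper: the paper also takes $F = f^2 - n_K g^2$ for a non-square $n_K$, obtaining the non-square as an \emph{integer} via the observation that a number field of finite degree over $\mathbb{Q}$ cannot contain $\mathbb{Q}\left(\sqrt{n}\right)$ for every $n\in\mathbb{Z}$, rather than via the parity of valuations at a finite place. The paper's choice of $n_K\in\mathbb{Z}$ is marginally more convenient for its later claims about integer-coefficient (parameter-free) defining polynomials, but for the lemma as stated your argument is equally valid.
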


\begin{proof}Since $K/\mathbb{Q}$ is a finite extension, we cannot have $\mathbb{Q}\left(\sqrt{n}\right)\subseteq K$ for all $n\in\mathbb{Z}$. Fix $n_K\in\mathbb{Z}$ such that $x^2-n_K\in \mathbb{Z}\left[x\right]$ has no solutions in $K$. Observe that if $a,b\in K$ then $a^2-n_Kb^2=0$ if and only if $a=b=0$, so we can take $F\coloneqq f^2-n_Kg^2$. $\blacksquare$

\end{proof}

Lemma \ref{intofdiof} can be generalized to show that over integral domains whose quotient field is not algebraically closed, any set of finitely many polynomial equations can be reduced to a single one (see \cite[Lemma 1.2.3]{MR2297245}).

\begin{remark}\label{bounddegree}If $n\in\mathbb{Z}_{\geq 1}$ and $K$ is a number field, the intersection of $n$ diophantine subsets of $K$ is again diophantine, and its defining polynomial has degree at most $2^{n-1}d$, where $d$ is the maximum degree among the polynomials defining each of the $n$ intersected sets.

\end{remark}

Proposition \ref{intdiofopt} will allow us to significantly improve the bound in Remark \ref{bounddegree}.

We can use all these observations to build several diophantine subsets which will combine into the sets whose first-order definitions we are looking for. Most of the following notations and definitions are taken from \cite{MR3207365}, with some slight modification.

Fix a number field $K$ and $a,b\in K^\times$. The set of reduced traces of elements of $H_{a,b,K}$ having reduced norm $1$ is\[S_{a,b,K}\coloneqq \left\{2x_1:\left(x_1,x_2,x_3,x_4\right)\in K\times K\times K\times K\wedge x_1^2-ax_2^2-bx_3^2+abx_4^2=1\right\},\]which is diophantine since it is defined by the formula\[\exists x_2\exists x_3\exists x_4\left(x^2-4ax_2^2-4bx_3^2+4abx_4^2-4=0\right).\]This diophantine definition is \emph{uniform} over all such $a,b\in K^\times$, in the sense that if we want to define a specific set $S_{a,b,K}$, we only have to substitute for the corresponding $a,b\in K^\times$. Moreover, if we allow $a$ and $b$ to be variables, the resulting polynomial has integer coefficients.

This same integral-uniformity phenomenon occurs with the set $T_{a,b,K}\coloneqq S_{a,b,K}+S_{a,b,K}$, which can be defined in the free variable $x$ through the first-order formula $\exists y\left(y\in S_{a,b,K}\wedge x-y\in S_{a,b,K}\right)$, which is also a diophantine definition by Lemma \ref{intofdiof}. Continuing this reasoning we define the following sets, which are also given in terms of $a$ and $b$ (and $K$), as well as of $c,d\in K^\times$, and where the same integral-uniformity phenomenon occurs:

\begin{itemize}

\item $T_{a,b,K}^\times\coloneqq\left\{u\in T_{a,b,K}:\exists v\in T_{a,b,K}\left(uv=1\right)\right\}$, which can be defined by the existential first-order formula $u\in T_{a,b,K}\wedge \exists v\left(v\in T_{a,b,K}\wedge uv=1\right)$.

\item $I_{a,b,K}^c\coloneqq c\cdot K^2\cdot T_{a,b,K}^\times \cap \left(1-K^2\cdot T_{a,b,K}^\times\right)$, which can be defined by the existential first-order formula $\exists y_1\exists z_1\exists y_2\exists z_2\left(x=cy_1^2z_1\wedge z_1\in T_{a,b,K}^\times\wedge x=1-y_2^2z_2\wedge z_2\in T_{a,b,K}^\times\right)$. Here $K^2$ means squares in $K$ and not the cartesian product $K\times K$.

\item $J_{a,b,K}\coloneqq \left(I_{a,b,K}^a+I_{a,b,K}^a\right)\cap \left(I_{a,b,K}^b+I_{a,b,K}^b\right)$, which can be defined by the existential first-order formula $\exists y\exists z\left(y\in I_{a,b,K}^a\wedge x-y\in I_{a,b,K}^a\wedge z\in I_{a,b,K}^b\wedge x-z\in I_{a,b,K}^b\right)$.

\item $J_{a,b,c,d,K}\coloneqq J_{a,b,K}+J_{c,d,K}$, which can be defined by the existential first-order formula $\exists y\left(y\in J_{a,b,K}\wedge x-y\in J_{c,d,K}\right)$.

\end{itemize}

The reason why we introduce these sets is because they are, as mentioned, diophantine in a uniform way with respect to $a,b,c,d$ and, moreover, they allow us to give diophantine definitions for Jacobson radicals of semi-local subrings of $K$. To make this precise and explicit, let us first define:\[\text{If $\lambda\in K^\times$, $\mathbb{P}_K\left(\lambda\right)\coloneqq \left\{\mathfrak{p}\in\Omega_K^{<\infty}:2\nmid \nu_{\mathfrak{p}}\left(\lambda\right)\right\}$},\]\[\Delta_{a,b,K}\coloneqq \left\{v\in \Omega_K:\left(a,b\right)_v=-1\right\},\]\[\Delta^{a,b,K}\coloneqq \Delta_{a,b,K}\cap \left(\mathbb{P}_K\left(a\right)\cup\mathbb{P}_K\left(b\right)\right),\]\[\Omega_{a,b,c,d,K}\coloneqq \Delta^{a,b,K}\cap \Delta^{c,d,K}.\]

Observe that, as a particular case of Lemma \ref{connect} and in light of Section \ref{hsym}, the set $\Delta_{a,b,K}$ is nothing but the set of places $v$ such that $H_{a,b,K_v}$ is split. Moreover, Theorem \ref{epsiv2} implies that $\Delta_{a,b,K}$ (and thus $\Omega_{a,b,c,d,K}$) is a finite set. In Theorem \ref{triunfo} we show that, choosing appropriate $a,b,c,d\in K^\times$, the set $\Omega_{a,b,c,d,K}$ can be any finite subset of $\Omega_K^{<\infty}$, and this will be the main technical result which will make all our arguments work.

In her proof of \cite[Lemma 3.17]{MR3207365} Park proves that if $K$ is a number field and $a,b\in K^\times$ then $\displaystyle{\left(I_{a,b,K}^a+I_{a,b,K}^a\right)\cap \left(I_{a,b,K}^b+I_{a,b,K}^b\right)=\bigcap_{\mathfrak{p}\in\Delta_{a,b,K}\cap \left(\mathbb{P}_K\left(a\right)\cup\mathbb{P}_K\left(b\right)\right)}\mathfrak{p}\left(\mathcal{O}_{K}\right)_\mathfrak{p}}$. Hence:

\begin{lemma}\label{jacobson}If $K$ is a number field and $a,b\in K^\times$ then\[J_{a,b,K}=\bigcap_{\mathfrak{p}\in\Delta^{a,b,K}}\mathfrak{p}\left(\mathcal{O}_{K}\right)_\mathfrak{p}.\]

\end{lemma}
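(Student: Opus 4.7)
The plan is to recognize this lemma as an immediate consequence of the identity extracted from Park's proof of \cite[Lemma 3.17]{MR3207365} which is explicitly quoted in the paragraph preceding the statement. Concretely, I would unwind the two definitions introduced in Section \ref{imppdiof}: by construction
\[J_{a,b,K} = \left(I_{a,b,K}^a+I_{a,b,K}^a\right)\cap \left(I_{a,b,K}^b+I_{a,b,K}^b\right),\]
and by definition $\Delta^{a,b,K} = \Delta_{a,b,K}\cap \left(\mathbb{P}_K(a)\cup\mathbb{P}_K(b)\right)$. Substituting both into Park's identity
\[\left(I_{a,b,K}^a+I_{a,b,K}^a\right)\cap \left(I_{a,b,K}^b+I_{a,b,K}^b\right)=\bigcap_{\mathfrak{p}\in\Delta_{a,b,K}\cap \left(\mathbb{P}_K(a)\cup\mathbb{P}_K(b)\right)}\mathfrak{p}\left(\mathcal{O}_K\right)_\mathfrak{p}\]
yields exactly the desired formula.

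Therefore the only work to do in the proof is a notational sanity check: I would verify that the sets $T_{a,b,K}$, $T_{a,b,K}^\times$, $I_{a,b,K}^c$, and the pair of sums $\left(I_{a,b,K}^a+I_{a,b,K}^a\right)\cap\left(I_{a,b,K}^b+I_{a,b,K}^b\right)$ that we have written down match those used by Park. The author has already flagged that the notation here follows \cite{MR3207365} with only slight modification, so this verification is routine.

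The main conceptual obstacle, which is resolved by the citation rather than re-proven in this paper, is the local-global characterization hiding in Park's identity: at each finite place $\mathfrak{p}\notin \Delta^{a,b,K}$ one must show that either the quaternion algebra $H_{a,b,K_\mathfrak{p}}$ splits (so by Lemma \ref{connect} every element of $K_\mathfrak{p}^\times$ is represented by a reduced norm from $H_{a,b,K}$, forcing $T_{a,b,K}^\times$ and hence the sumsets $I_{a,b,K}^a+I_{a,b,K}^a$ and $I_{a,b,K}^b+I_{a,b,K}^b$ to exhaust $K_\mathfrak{p}$), or else $\nu_\mathfrak{p}(a)$ and $\nu_\mathfrak{p}(b)$ are both even so the square scalings in the definition of $I_{a,b,K}^c$ again make the local constraint vacuous; while at $\mathfrak{p}\in \Delta^{a,b,K}$ the two sumset conditions precisely cut out $\mathfrak{p}(\mathcal{O}_K)_\mathfrak{p}$. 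Since this case analysis is carried out in full in Park's cited proof, I would simply apply it and record the identification $J_{a,b,K}=\bigcap_{\mathfrak{p}\in\Delta^{a,b,K}}\mathfrak{p}(\mathcal{O}_K)_\mathfrak{p}$, completing the proof. $\blacksquare$
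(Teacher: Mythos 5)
Your proposal is correct and matches the paper's treatment exactly: the paper likewise derives the lemma immediately from the identity $\left(I_{a,b,K}^a+I_{a,b,K}^a\right)\cap\left(I_{a,b,K}^b+I_{a,b,K}^b\right)=\bigcap_{\mathfrak{p}\in\Delta_{a,b,K}\cap\left(\mathbb{P}_K\left(a\right)\cup\mathbb{P}_K\left(b\right)\right)}\mathfrak{p}\left(\mathcal{O}_K\right)_\mathfrak{p}$ extracted from Park's proof of Lemma 3.17, combined with the definitions of $J_{a,b,K}$ and $\Delta^{a,b,K}$. Your additional sketch of the local-global analysis inside Park's argument is not reproduced in the paper, but since you defer to the citation for it, the proofs are essentially identical.
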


\begin{corollary}\label{jabjab}If $K$ is a number field and $a,b,c,d\in K^\times$ then\[J_{a,b,c,d,K}=\bigcap_{\mathfrak{p}\in\Omega_{a,b,c,d,K}}\mathfrak{p}\left(\mathcal{O}_{K}\right)_\mathfrak{p}.\]

\end{corollary}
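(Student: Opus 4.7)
The plan is to apply Lemma \ref{jacobson} to each of the two summands in the definition $J_{a,b,c,d,K}=J_{a,b,K}+J_{c,d,K}$ and, using $\Omega_{a,b,c,d,K}=\Delta^{a,b,K}\cap\Delta^{c,d,K}$, reduce the statement to a purely valuation-theoretic identity. Setting $A\coloneqq\Delta^{a,b,K}$, $B\coloneqq\Delta^{c,d,K}$, and for any finite subset $X$ of $\Omega_K^{<\infty}$
\[M_X\coloneqq\bigcap_{\mathfrak{p}\in X}\mathfrak{p}(\mathcal{O}_K)_\mathfrak{p}=\bigl\{r\in K:\nu_\mathfrak{p}(r)\geq 1\text{ for all }\mathfrak{p}\in X\bigr\},\]
what needs to be shown becomes the identity $M_A+M_B=M_{A\cap B}$ (with addition taken inside $K$). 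Note that $A$ and $B$ are finite, as already observed just before Lemma \ref{jacobson}, so the index sets appearing below are all finite.

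The inclusion $M_A+M_B\subseteq M_{A\cap B}$ is immediate from the ultrametric inequality: for $x\in M_A$, $y\in M_B$, and $\mathfrak{p}\in A\cap B$ one has $\nu_\mathfrak{p}(x+y)\geq\min\{\nu_\mathfrak{p}(x),\nu_\mathfrak{p}(y)\}\geq 1$.

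For the reverse inclusion, given $z\in M_{A\cap B}$ I would like to write $z=x+y$ with $x\in M_A$ and $y\in M_B$. The idea is to invoke weak approximation on the finite set of places $A\cup B$ to produce $x\in K$ satisfying the local conditions $\nu_\mathfrak{p}(x)\geq 1$ for every $\mathfrak{p}\in A$ and $\nu_\mathfrak{p}(x-z)\geq 1$ for every $\mathfrak{p}\in B\setminus A$; these are conditions imposed at pairwise distinct places, so weak approximation (equivalently, Chinese remainder applied in the semi-local Dedekind domain $\bigcap_{\mathfrak{p}\in A\cup B}(\mathcal{O}_K)_\mathfrak{p}$, which is a PID) produces such an $x$. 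Setting $y\coloneqq z-x$, membership $y\in M_B$ is verified place by place: for $\mathfrak{p}\in B\setminus A$ this is exactly the second local condition on $x$, and for $\mathfrak{p}\in A\cap B$ we have both $\nu_\mathfrak{p}(x)\geq 1$ (first condition) and $\nu_\mathfrak{p}(z)\geq 1$ (hypothesis on $z$), so the ultrametric inequality again gives $\nu_\mathfrak{p}(y)\geq 1$.

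I expect no substantive obstacle: the corollary is in essence a reformulation of weak approximation in the language supplied by Lemma \ref{jacobson}, and the only features of $\Delta^{a,b,K}$ and $\Delta^{c,d,K}$ that are actually used are their finiteness. The main care point is simply to separate the places of $A\cup B$ into the three disjoint groups $A\setminus B$, $A\cap B$, $B\setminus A$ and verify that the local conditions on $x$ are consistent at each one.
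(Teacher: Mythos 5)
Your proposal is correct and matches the route the paper intends: the corollary is stated without proof as an immediate consequence of Lemma \ref{jacobson} applied to the two summands of $J_{a,b,c,d,K}=J_{a,b,K}+J_{c,d,K}$, together with the standard identity $M_A+M_B=M_{A\cap B}$ for the semi-local ideals indexed by the finite sets $A=\Delta^{a,b,K}$ and $B=\Delta^{c,d,K}$. You have simply supplied the weak-approximation argument that the paper leaves implicit, and it is carried out correctly.
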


We have seen that if $K$ is a number field and $a,b\in K^\times$, the set $J_{a,b,c,d,K}$ is diophantine, and its defining existential first-order formula can be taken to be uniform with respect to $a,b,c,d$. Using this, we can quickly show that the set $\left\{\left(a,b,c,d,r\right)\in \left(K^\times\right)^4\times K:r\in J_{a,b,K}\right\}$ is a diophantine subset of $K^5$ defined by a polynomial with integer coefficients, as witnessed by the formula $\exists x\left(abcdx=1\wedge r\in J_{a,b,c,d,K}\right)$, which makes sense because ``$r\in J_{a,b,c,d,K}$'' is indeed a formula in the variables $a,b,c,d,r$ (because $J_{a,b,c,d,K}$ is defined in a uniform way with respect to $a,b,c,d$), while the existence for a multiplicative inverse of $abcd$ makes sure that $a,b,c,d\in K^\times$. All the existential quantifiers can be moved to the beginning of the formula.

We can generalize this observation to a more general set, which we define next.

\begin{definition}\label{jabn}For each number field $K$ and $a,b,c,d\in K^\times$ we define $J_{a,b,c,d,1,K}\coloneqq J_{a,b,c,d,K}$ and, for each $n\in\mathbb{Z}_{\geq 1}$, $J_{a,b,c,d,n+1,K}\coloneqq J_{a,b,c,d,n,K}\cdot J_{a,b,c,d,K}=\left\{xy:x\in J_{a,b,c,d,n,K}\wedge y\in J_{a,b,c,d,K}\right\}$.

\end{definition}

If $n\in\mathbb{Z}_{\geq 2}$, for a given number field $K$ and $a,b,c,d\in K^\times$, the formula\[\exists y\exists z\left(y\in J_{a,b,c,d,K}\wedge z\in J_{a,b,c,d,K}\wedge x=yz^{n-1}\right)\]defines $J_{a,b,c,d,n,K}$. Indeed, it is immediate that if $x\in K$ is such that the above formula holds, then $x\in J_{a,b,c,d,K}\cdot J_{a,b,c,d,n-1,K}=J_{a,b,c,d,n,K}$. Conversely, assume $x\in J_{a,b,c,d,n,K}$. For each $\mathfrak{p}\in\Omega_{a,b,c,d,K}$, unique factorization gives $\mathfrak{p}^2\subsetneq \mathfrak{p}$, so there exists $z_\mathfrak{p}\in\mathfrak{p}\setminus\mathfrak{p}^2$. Use the Chinese Remainder Theorem to find $z\in \mathcal{O}_{K}$ such that $z\equiv z_\mathfrak{p}\pmod{\mathfrak{p}^2}$ for all $\mathfrak{p}\in \Omega_{a,b,c,d,K}$ (in particular $0\neq z\in J_{a,b,c,d,K}$, and $\nu_\mathfrak{p}\left(z\right)=1$ for all $\mathfrak{p}\in \Omega_{a,b,c,d,K}$). Since $\nu_\mathfrak{p}\left(x\right)\geq n$ (and thus $\nu_\mathfrak{p}\left(\frac{x}{z^{n-1}}\right)\geq n-\left(n-1\right)=1$) for all such $\mathfrak{p}$, then $\frac{x}{z^{n-1}}\in J_{a,b,c,d,K}$, as desired.

In particular, we get:

\begin{lemma}\label{jabndiof}If $K$ is a number field and $n\in\mathbb{Z}_{\geq 1}$, the set\[\left\{\left(a,b,c,d,r\right)\in \left(K^\times\right)^4\times K:r\in J_{a,b,c,d,n,K}\right\}\]is diophantine in $K$, with a defining polynomial with integer coefficients.

\end{lemma}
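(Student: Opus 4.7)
The plan is to verify the displayed formula shown immediately before the lemma and then translate it into a genuine single-polynomial diophantine condition with integer coefficients. First, I would confirm by induction on $n$ that the formula
\[
\exists x_1\cdots\exists x_n\left[\left(x=\prod_{i=1}^n x_i\right)\wedge\left(\bigwedge_{i=1}^n x_i\in J_{a,b,c,d,K}\right)\right]
\]
correctly defines $J_{a,b,c,d,n,K}$: the base case $n=1$ is the definition, and the inductive step is immediate from Definition \ref{jabn} by separating off the last factor.

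Next, I would invoke the fact, established throughout Section \ref{imppdiof}, that $J_{a,b,c,d,K}$ admits an existential definition uniformly in the parameters $(a,b,c,d)\in(K^\times)^4$ via a polynomial with integer coefficients. Substituting this defining polynomial into each of the $n$ conjuncts $x_i\in J_{a,b,c,d,K}$, together with the polynomial equation $x-\prod_{i=1}^n x_i=0$, produces a system of polynomial equations in the free tuple $(a,b,c,d,r)$ with auxiliary existentially quantified variables, all with integer coefficients. The side constraint $(a,b,c,d)\in(K^\times)^4$ is then encoded by the further existential condition $\exists w\,(abcd\cdot w-1=0)$, which also has integer coefficients. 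Moving every existential quantifier to the front yields a single $\exists$-block.

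To collapse the resulting finite system into a single polynomial equation, I would apply Lemma \ref{intofdiof} iteratively. The only constant introduced by that lemma is the integer $n_K\in\mathbb{Z}$ used to combine $f$ and $g$ into $f^2-n_K g^2$, so the reduction preserves integer coefficients. This is the only point in the argument where one must be attentive: the whole proof is essentially routine bookkeeping, and the mild ``obstacle'' is just to check that no step secretly introduces coefficients outside $\mathbb{Z}$. Since (i) the diophantine presentation of $J_{a,b,c,d,K}$ inherited from Section \ref{imppdiof} already has integer coefficients and uniform parameters $(a,b,c,d)$, and (ii) Lemma \ref{intofdiof} combines two integer-coefficient polynomials into a single integer-coefficient polynomial, this check goes through, yielding the desired uniform diophantine description.
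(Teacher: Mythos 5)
Your proposal is correct and follows essentially the same route as the paper, which simply exhibits the displayed formula and appeals to the uniform integer-coefficient existential definitions of $J_{a,b,c,d,K}$ built up in Section \ref{imppdiof}, together with Lemma \ref{intofdiof} to collapse the conjunction. Your extra care about the constant $n_K$ being an integer and the encoding of $(a,b,c,d)\in(K^\times)^4$ via $\exists w\,(abcdw=1)$ is exactly the bookkeeping the paper leaves implicit.
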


\section{Parametrization of finite sets of non-archimedean places}\label{pruebadelteom}

We begin with a parametrization of the set of finite subsets of the finite places of a number field, which will allow us to show the uniformity of our first-order definitions over them.

\begin{theorem}\label{triunfo}Fix a number field $K$ and a finite subset $S$ of $\Omega_K^{<\infty}$. Then there exist $a,b,c,d\in K^\times$ such that $\Omega_{a,b,c,d,K}=S$. More precisely, if $S$ has even cardinality, there exist $a,b\in K^\times$ such that $\Delta^{a,b,K}=S$, and if $S$ has odd cardinality, there exist $a,b\in K^\times$ such that $S=\Delta_{a,b,K}\cap\mathbb{P}_K\left(a\right)$.
\end{theorem}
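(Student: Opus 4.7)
The plan is to construct $a, b$ realizing the two ``more precise'' claims, then to derive the main statement $\Omega_{a,b,c,d,K}=S$ via a combinatorial observation using two pairs. The two key ingredients are (i) a class-group argument to control $\mathbb{P}_K(a)$, and (ii) Theorem~\ref{epsiv2}, which allows us to prescribe the Hilbert-symbol values of $b$ against $a$ at every place simultaneously.

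For $|S|$ even, I would first produce $a\in K^\times$ with $\mathbb{P}_K(a)\supseteq S$. The ideal $I=\prod_{\mathfrak{p}\in S}\mathfrak{p}$ need not be principal, and $[I]$ need not lie in $2\,\mathrm{Cl}(K)$, but by Chebotarev's density theorem one can find a finite set $T$ of primes disjoint from $S$ such that the class of $I\cdot\prod_{\mathfrak{q}\in T}\mathfrak{q}$ lies in $2\,\mathrm{Cl}(K)$; a generator $a$ of the resulting principal ideal $I\cdot\prod_{\mathfrak{q}\in T}\mathfrak{q}\cdot J^{-2}$ then satisfies $\mathbb{P}_K(a)=S\cup T$. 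Applying Theorem~\ref{epsiv2} to the singleton $\{a\}$ with target values $\varepsilon_v=-1$ for $v\in S$ and $\varepsilon_v=1$ otherwise, I would extract $b\in K^\times$ with $\Delta_{a,b,K}=S$: condition (1) is immediate, condition (2) holds because $|S|$ is even, and condition (3) follows from Proposition~\ref{lin}, since $a$ has odd valuation (hence is non-square) at each $v\in S\cup T$ so $(a,-)_v$ is surjective, while at all other places the choice $x_v=1$ realizes the trivial symbol. Because $\Delta_{a,b,K}=S\subseteq \mathbb{P}_K(a)$, we conclude $\Delta^{a,b,K}=S$.

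For $|S|$ odd, Hilbert reciprocity forces $|\Delta_{a,b,K}|$ to be even, so I would introduce an auxiliary place $\mathfrak{q}_0\notin S$ in a way that keeps $\mathfrak{q}_0\notin \mathbb{P}_K(a)$ while forcing $\mathfrak{q}_0\in \Delta_{a,b,K}$. Concretely, I first build $a$ as in the even case with $\mathbb{P}_K(a)=S\cup T$, then choose $\mathfrak{q}_0$ of odd residue characteristic, disjoint from $S\cup T$, at which $a$ is a non-square in $K_{\mathfrak{q}_0}$; such primes form a set of density $\tfrac{1}{2}$ by Chebotarev applied to $K(\sqrt{a})/K$. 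Applying Theorem~\ref{epsiv2} with $\varepsilon_v=-1$ on $S\cup\{\mathfrak{q}_0\}$ and $\varepsilon_v=1$ elsewhere produces $b$ with $\Delta_{a,b,K}=S\cup\{\mathfrak{q}_0\}$, so $\Delta_{a,b,K}\cap\mathbb{P}_K(a)=S$ as required.

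Finally, to deduce $\Omega_{a,b,c,d,K}=S$ in general: for $|S|$ even, I take $(c,d)=(a,b)$, giving $\Omega_{a,b,a,b,K}=\Delta^{a,b,K}\cap\Delta^{a,b,K}=S$; for $|S|$ odd, I fix two distinct primes $\mathfrak{q}_1\ne \mathfrak{q}_2$ outside $S$, apply the even case to each of $S\cup\{\mathfrak{q}_1\}$ and $S\cup\{\mathfrak{q}_2\}$ (both of even cardinality) to obtain pairs $(a,b)$, $(c,d)$ with $\Delta^{a,b,K}=S\cup\{\mathfrak{q}_1\}$ and $\Delta^{c,d,K}=S\cup\{\mathfrak{q}_2\}$, whose intersection is exactly $S$. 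The main obstacle I foresee is the simultaneous control of $\mathbb{P}_K(a)$ and of the Hilbert-symbol pattern, particularly at dyadic primes and at $\mathfrak{q}_0$ where $a$ might accidentally be a local square — this is handled by the Chebotarev choice of auxiliary primes and the freedom to scale $a$ by units without altering its odd-valuation support.
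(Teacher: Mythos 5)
Your proposal is correct, and its skeleton is the same as the paper's: make $a$ have odd valuation (hence be a local nonsquare) at every $\mathfrak{p}\in S$, use Proposition~\ref{lin} to verify the local condition and Theorem~\ref{epsiv2} to produce $b$ with $\Delta_{a,b,K}=S$ (even case) or $S\cup\{\mathfrak{q}_0\}$ (odd case), then intersect with $\mathbb{P}_K(a)$. The differences are in two sub-steps. First, you construct $a$ as a generator of $I\cdot\prod_{\mathfrak{q}\in T}\mathfrak{q}\cdot J^{-2}$ via a class-group/Chebotarev argument; the paper sidesteps the class group entirely by choosing, via CRT, elements $y_{\mathfrak{p}}\in\mathcal{O}_K$ with $\nu_{\mathfrak{p}}(y_{\mathfrak{q}})=\delta(\mathfrak{p},\mathfrak{q})$ and setting $a=\prod y_{\mathfrak{p}}$ --- note that only $\nu_{\mathfrak{p}}(a)$ odd for $\mathfrak{p}\in S$ is ever used, so exact control of $\mathbb{P}_K(a)$ (and hence your auxiliary set $T$ and the coprimality of $J$ with $S\cup T$, which you should state) is unnecessary. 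Second, in the odd case you find the auxiliary prime $\mathfrak{q}_0$ by Chebotarev in $K(\sqrt{a})/K$ so that $a$ is already a nonsquare there with even valuation; the paper instead fixes an arbitrary odd prime $\mathfrak{q}\notin S$ in advance and twists $a$ by a CRT-chosen factor $\tau$ so that $\nu_{\mathfrak{q}}(a')=2$ and $a'$ reduces to a nonsquare unit times a square, which is more elementary but requires the small primitive-root computation. Your final combinatorial step (taking $(c,d)=(a,b)$ when $|S|$ is even, and intersecting $\Delta^{a,b,K}=S\cup\{\mathfrak{q}_1\}$ with $\Delta^{c,d,K}=S\cup\{\mathfrak{q}_2\}$ when $|S|$ is odd) is exactly the deduction the paper leaves implicit, and it is worth writing out as you did.
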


\begin{proof}Assume first $S$ has even cardinality. For each $\mathfrak{p}\in S$, unique factorization of ideals over Dedekind domains gives $\mathfrak{p}^2\subsetneq \mathfrak{p}$, thus we fix $z_\mathfrak{p}\in \mathfrak{p}\setminus\mathfrak{p}^2$ and use the Chinese Remainder Theorem to find $y_\mathfrak{p}\in\mathcal{O}_K$ such that\[\begin{cases}y_\mathfrak{p}\equiv z_\mathfrak{p}\pmod{\mathfrak{p}^2},\\\displaystyle{y_\mathfrak{p}\equiv 1\pmod{\prod_{\mathfrak{q}\in S\setminus\left\{\mathfrak{p}\right\}}\mathfrak{q}}}.\end{cases}\]By construction, if $\mathfrak{p},\mathfrak{q}\in S$ then $\nu_{\mathfrak{p}}\left(y_{\mathfrak{q}}\right)=\delta\left(\mathfrak{p},\mathfrak{q}\right)$, where $\delta$ is the Kronecker function on $S$. If $a\coloneqq \displaystyle{\prod_{\mathfrak{r}\in S}y_\mathfrak{r}}$, it follows that $\nu_\mathfrak{p}\left(a\right)=1$ for all $\mathfrak{p}\in S$.

For each $\mathfrak{p}\in S$, since $\nu_\mathfrak{p}\left(a\right)=1$ then $a$ is not a square in $K_\mathfrak{p}$, thus Proposition \ref{lin} gives $b_\mathfrak{p}\in K_\mathfrak{p}^\times$ such that $\left(a,b_\mathfrak{p}\right)_\mathfrak{p}=-1$. Since $\left(a,1\right)_v=1$ for all $v\in \Omega_K\setminus S$\footnote{Indeed, this holds for all $v\in \Omega_K$.} and $S$ is finite and of even cardinality, Theorem \ref{epsiv2} applies and gives $b\in K^\times$ such that $\left(a,b\right)_v=1$ for all $v\in \Omega_K\setminus S$ and $\left(a,b\right)_\mathfrak{p}=-1$ for all $\mathfrak{p}\in S$; that is, $\Delta_{a,b,K}=S$. Moreover, since $\nu_\mathfrak{p}\left(a\right)=1$ for all $\mathfrak{p}\in S$ then $\Delta_{a,b,K}\subseteq \mathbb{P}_K\left(a\right)$, thus\[\Delta_{a,b,K}=\Delta_{a,b,K}\cap \mathbb{P}_K\left(a\right)\subseteq \Delta^{a,b,K}=\Delta_{a,b,K}\cap \left(\mathbb{P}_K\left(a\right)\cup\mathbb{P}_K\left(b\right)\right)\subseteq \Delta_{a,b,K},\]giving $\Delta^{a,b,K}=\Delta_{a,b,K}=S$.

Now assume $S$ has odd cardinality, and fix a $\mathfrak{q}\in \Omega_K^{<\infty}\setminus S$ not lying above $2$. Define $S'\coloneqq S\cup\left\{\mathfrak{q}\right\}$. As before, for each $\mathfrak{p}\in S'$ fix $y_\mathfrak{p}\in\mathcal{O}_K$ such that $y_\mathfrak{p}\equiv z_\mathfrak{p}\pmod{\mathfrak{p}^2}$ and $\displaystyle{y_\mathfrak{p}\equiv 1\pmod{\prod_{\mathfrak{r}\in S'\setminus\left\{\mathfrak{p}\right\}}\mathfrak{r}}}$, where $z_\mathfrak{p}\in \mathfrak{p}\setminus\mathfrak{p}^2$. Hence $\nu_{\mathfrak{p}}\left(y_{\mathfrak{r}}\right)=\delta\left(\mathfrak{p},\mathfrak{r}\right)$ for all $\mathfrak{p},\mathfrak{r}\in S'$. It follows that if $a\coloneqq \displaystyle{\prod_{\mathfrak{r}\in S'}y_\mathfrak{r}}$ then $\nu_\mathfrak{p}\left(a\right)=1$ for all $\mathfrak{p}\in S'$.

Let $q$ be the cardinality of $\mathbb{F}_\mathfrak{q}$ (so $q$ is odd because $\mathfrak{q}$ does not lie above $2$), and take a generator $g$ of the cyclic group $\mathbb{F}_\mathfrak{q}^\times$, thus $g^{\frac{q-1}{2}}=-1$. Lift $g$ to some $\widetilde{g}\in \mathcal{O}_{K}\setminus \mathfrak{q}\mathcal{O}_K\subseteq\mathcal{O}_{K,\mathfrak{q}}^\times$, and use the Chinese Remainder Theorem to find a $\tau\in\mathcal{O}_K$ such that $\tau\frac{a}{y_\mathfrak{q}}\equiv \widetilde{g}\pmod{\mathfrak{q}}$ (which has a solution because $\nu_\mathfrak{q}\left(\frac{a}{y_\mathfrak{q}}\right)=1-1=0$) and $\displaystyle{\tau\equiv 1\pmod{\prod_{\mathfrak{r}\in S}\mathfrak{r}}}$.

By construction, if we define $a'\coloneqq \tau y_\mathfrak{q}a$ we get $\nu_\mathfrak{p}\left(a'\right)=1$ for all $\mathfrak{p}\in S$ and $\nu_\mathfrak{q}\left(a'\right)=2$. The former implies that $a'$ is not a square in $K_\mathfrak{p}$ for all $\mathfrak{p}\in S$. I claim that $a'$ is also a nonsquare in $K_\mathfrak{q}$. Indeed, writing $a'=\tau \frac{a}{y_\mathfrak{q}}y_\mathfrak{q}^2$, this is equivalent to showing that $\tau \frac{a}{y_\mathfrak{q}}$ is a nonsquare in $K_\mathfrak{q}$. But by our definition of $\tau$ we get $\tau \frac{a}{y_\mathfrak{q}}\equiv \widetilde{g}\pmod{\mathfrak{q}}$, thus if $\tau \frac{a}{y_\mathfrak{q}}$ were a square we would get $-1=g^{\frac{q-1}{2}}=1$ (the latter equality being given by Lagrange's theorem on $\mathbb{F}_\mathfrak{q}^\times$), giving $2=0\in\mathbb{F}_\mathfrak{q}$. This contradicts that $\mathfrak{q}$ does not lie above $2$.

Apply Proposition \ref{lin} to find $b_\mathfrak{p}\in K_\mathfrak{p}^\times$ such that $\left(a',b_\mathfrak{p}\right)_\mathfrak{p}=-1$ for all $\mathfrak{p}\in S'$, and since $S'$ is finite of even cardinality, we apply Theorem \ref{epsiv2} and find $b'\in K^\times$ such that $\Delta_{a',b',K}=S'$. Since $\nu_\mathfrak{p}\left(a'\right)=1$ for all $\mathfrak{p}\in S$ and $\nu_\mathfrak{q}\left(a'\right)=2$ then $S=S'\cap\mathbb{P}_K\left(a'\right)=\Delta_{a',b',K}\cap\mathbb{P}_K\left(a'\right)$, completing the proof. $\blacksquare$

\end{proof}

Theorem \ref{triunfo}, together with our uniform existential definitions in Section \ref{preliminares} will allow us to construct diophantine sets with arithmetic significance and obtain our desired descriptions of Campana points. A first step in this direction is to have some control on the elements of a number field having negative valuation with respect to a given finite set of non-archimedean places, and the following lemma will allow us to give such a description.

\begin{lemma}\label{pirueta}Let $K$ be a number field and let $a,b,c,d\in K^\times$. Then for all $n\in\mathbb{Z}_{\geq 1}$ we have\[\left(J_{a,b,c,d,n,K}\setminus\left\{0\right\}\right)^{-1}=\left\{r\in K^\times:\forall \mathfrak{p}\in\Omega_{a,b,c,d,K}\left(\nu_\mathfrak{p}\left(r\right)\leq -n\right)\right\}.\]

\end{lemma}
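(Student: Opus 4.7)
My plan is to first establish, by induction on $n$, the ``positive side'' identity
\[
J_{a,b,c,d,n,K}\setminus\{0\}=\left\{r\in K^\times:\nu_\mathfrak{p}\left(r\right)\geq n\text{ for all }\mathfrak{p}\in\Omega_{a,b,c,d,K}\right\},
\]
and then obtain the lemma by taking multiplicative inverses on both sides (since $\nu_\mathfrak{p}(r^{-1})=-\nu_\mathfrak{p}(r)$, inverting interchanges ``$\geq n$'' with ``$\leq -n$'').

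The base case $n=1$ is immediate from Corollary \ref{jabjab}: the maximal ideal $\mathfrak{p}(\mathcal{O}_K)_\mathfrak{p}$ consists exactly of $0$ and the elements of $K$ with $\mathfrak{p}$-adic valuation at least $1$, so intersecting over $\mathfrak{p}\in\Omega_{a,b,c,d,K}$ gives the claim. For the inductive step, the inclusion $\subseteq$ is routine: if $r=xy$ with $x\in J_{a,b,c,d,n,K}$, $y\in J_{a,b,c,d,K}$, both nonzero, then $\nu_\mathfrak{p}(r)=\nu_\mathfrak{p}(x)+\nu_\mathfrak{p}(y)\geq n+1$ for every $\mathfrak{p}\in\Omega_{a,b,c,d,K}$ by the inductive hypothesis and the $n=1$ case.

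The core of the argument is the reverse inclusion $\supseteq$, which is where I would invest the work. Given $r\in K^\times$ with $\nu_\mathfrak{p}(r)\geq n+1$ for every $\mathfrak{p}\in\Omega_{a,b,c,d,K}$, I want to split off a factor of $\mathfrak{p}$-adic valuation exactly $1$ at every $\mathfrak{p}\in\Omega_{a,b,c,d,K}$. For this I mimic the Chinese Remainder Theorem construction already employed in the proof of Theorem \ref{triunfo}: for each $\mathfrak{p}\in\Omega_{a,b,c,d,K}$ choose a uniformizer $z_\mathfrak{p}\in\mathfrak{p}\setminus\mathfrak{p}^2$ and use the Chinese Remainder Theorem in $\mathcal{O}_K$ to find $y\in\mathcal{O}_K$ with $y\equiv z_\mathfrak{p}\pmod{\mathfrak{p}^2}$ for every $\mathfrak{p}\in\Omega_{a,b,c,d,K}$, so that $\nu_\mathfrak{p}(y)=1$ for all such $\mathfrak{p}$. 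Then $y\in J_{a,b,c,d,K}$ by the $n=1$ case, and $x\coloneqq r/y\in K^\times$ satisfies $\nu_\mathfrak{p}(x)=\nu_\mathfrak{p}(r)-1\geq n$ for every $\mathfrak{p}\in\Omega_{a,b,c,d,K}$, hence $x\in J_{a,b,c,d,n,K}$ by the inductive hypothesis. Therefore $r=xy\in J_{a,b,c,d,n,K}\cdot J_{a,b,c,d,K}=J_{a,b,c,d,n+1,K}$, completing the induction.

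I do not anticipate any real obstacle; the only non-formal ingredient is the CRT factorization, but it is essentially the same trick used earlier in Theorem \ref{triunfo}, so it costs nothing new. After the auxiliary identity is proved, the statement of the lemma follows by applying the map $r\mapsto r^{-1}$, which is a bijection of $K^\times$ that sends $\{r\in K^\times:\nu_\mathfrak{p}(r)\geq n\ \forall\mathfrak{p}\in\Omega_{a,b,c,d,K}\}$ onto $\{r\in K^\times:\nu_\mathfrak{p}(r)\leq -n\ \forall\mathfrak{p}\in\Omega_{a,b,c,d,K}\}$.
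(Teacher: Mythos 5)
Your proposal is correct and follows essentially the same route as the paper: the paper likewise reduces to the positive-valuation identity, inducts on $n$ with the base case from Corollary \ref{jabjab}, and handles the reverse inclusion of the inductive step by the same Chinese Remainder Theorem construction of an element of valuation exactly $1$ at every $\mathfrak{p}\in\Omega_{a,b,c,d,K}$, then concludes by taking inverses. No gaps.
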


\begin{proof}Since taking inverses is a bijection, it suffices to show that\begin{equation}\label{formmm}J_{a,b,c,d,n,K}\setminus\left\{0\right\}=\left\{x\in K^\times:\forall \mathfrak{p}\in\Omega_{a,b,c,d,K}\left(\nu_\mathfrak{p}\left(x\right)\geq n\right)\right\}\end{equation}for all $n\in\mathbb{Z}_{\geq 1}$. We argue by induction. The case $n=1$ follows directly from Corollary \ref{jabjab}, thus fix $n\in\mathbb{Z}_{\geq 1}$ such that \eqref{formmm} is true.

If $x\in J_{a,b,c,d,n+1,K}\setminus\left\{0\right\}$ then $x=yz$ for some $y\in J_{a,b,c,d,n,K}$ and $z\in J_{a,b,c,d,K}$, thus by inductive hypothesis and our base step, $\nu_\mathfrak{p}\left(x\right)=\nu_\mathfrak{p}\left(y\right)+\nu_{\mathfrak{p}}\left(z\right)\geq n+1$ for all $\mathfrak{p}\in \Omega_{a,b,c,d,K}$. Conversely, assume $x\in K^\times$ and $\nu_\mathfrak{p}\left(x\right)\geq n+1$ for all $\mathfrak{p}\in\Omega_{a,b,c,d,K}$. For each such $\mathfrak{p}$ we can fix $z_\mathfrak{p}\in\mathfrak{p}\setminus\mathfrak{p}^2$.  Chinese Remainder Theorem gives $z\in \mathcal{O}_K$ such that $z\equiv z_\mathfrak{p}\pmod{\mathfrak{p}^2}$ for all $\mathfrak{p}\in \Omega_{a,b,c,d,K}$. So $\nu_\mathfrak{p}\left(z\right)=1$ for all $\mathfrak{p}\in \Omega_{a,b,c,d,K}$ and thus $z\in J_{a,b,c,d,K}$. If $y\coloneqq xz^{-1}$ then $\nu_{\mathfrak{p}}\left(y\right)=\nu_{\mathfrak{p}}\left(x\right)-\nu_{\mathfrak{p}}\left(z\right)=\nu_{\mathfrak{p}}\left(x\right)-1\geq \left(n+1\right)-1=n$ for all $\mathfrak{p}\in\Delta^{a,b,K}$.

By inductive hypothesis $y\in J_{a,b,c,d,n,K}$, so $x=yz\in J_{a,b,c,d,n,K}\cdot J_{a,b,c,d,K}=J_{a,b,c,d,n+1,K}$. $\blacksquare$

\end{proof}

\begin{proposition}\label{tres}If $K$ is a number field and $n\in\mathbb{Z}_{\geq 1}$, then the set\[\left\{\left(a,b,c,d,r\right)\in \left(K^\times\right)^4\times K:r\in \left(J_{a,b,c,d,n,K}\setminus\left\{0\right\}\right)^{-1}\right\}\]is diophantine in $K$.
\end{proposition}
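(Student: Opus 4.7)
The plan is to express membership in $(J_{a,b,c,d,n,K}\setminus\{0\})^{-1}$ as an existential formula by introducing a witness for the multiplicative inverse. Specifically, for $r \in K$ and $(a,b,c,d) \in (K^\times)^4$, we have $r \in (J_{a,b,c,d,n,K}\setminus\{0\})^{-1}$ if and only if there exists $s \in K$ such that $s \in J_{a,b,c,d,n,K}$ and $rs = 1$. Note that the equation $rs = 1$ already forces $s \neq 0$, so we do not need to separately encode the condition of removing $0$ from $J_{a,b,c,d,n,K}$.

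The key inputs are Lemma \ref{jabndiof}, which guarantees that the set $\{(a,b,c,d,s) \in (K^\times)^4 \times K : s \in J_{a,b,c,d,n,K}\}$ is diophantine in $K$ with integer-coefficient defining polynomial, and Lemma \ref{intofdiof}, which allows us to collapse a finite conjunction of polynomial equations into a single one (at the cost of at most doubling the degree). To ensure that $(a,b,c,d) \in (K^\times)^4$, I would use the standard trick of adjoining the auxiliary condition $\exists t \, (abcd \cdot t - 1 = 0)$, which exactly captures that $a,b,c,d$ are all nonzero. Thus the defining formula becomes
\[
\exists t \, \exists s \, \bigl( abcd\,t - 1 = 0 \,\wedge\, s \in J_{a,b,c,d,n,K} \,\wedge\, rs - 1 = 0 \bigr),
\]
where the middle conjunct is itself an existential first-order formula by Lemma \ref{jabndiof}, whose existential quantifiers can be pulled to the front.

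After combining the three polynomial equations into one via Lemma \ref{intofdiof} (applied twice), we obtain a single polynomial equation in the variables $a,b,c,d,r$ together with the auxiliary existentially quantified variables. This exhibits the set in question as a diophantine subset of $K^5$, completing the proof.

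There is essentially no obstacle here: this proposition is a routine closure-under-image assertion, and all the substantive content has already been established in Lemma \ref{jabndiof} (uniform diophantine definition of $J_{a,b,c,d,n,K}$) and Lemma \ref{pirueta} (which identifies the arithmetic meaning of $(J_{a,b,c,d,n,K}\setminus\{0\})^{-1}$ but is not even needed for this particular statement). The only mild point to be careful about is that the formula must respect the parameters $(a,b,c,d)$ lying in $(K^\times)^4$ as variables of the ambient field, which is exactly what the auxiliary variable $t$ ensures.
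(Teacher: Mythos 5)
Your proof is correct and is essentially identical to the paper's: the paper defines the set by the formula $\exists x\exists y\left(abcdx=1\wedge ry=1\wedge y\in J_{a,b,c,d,n,K}\right)$, invoking Lemma \ref{jabndiof} for the middle conjunct, exactly as you do with your witnesses $t$ and $s$. The observation that $rs=1$ automatically excludes $s=0$, and the use of Lemma \ref{intofdiof} to collapse the conjunction, match the paper's (implicit) reasoning.
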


\begin{proof}Fix $n\in\mathbb{Z}_{\geq 1}$. By Lemma \ref{jabndiof}, the formula (in the free variables $a,b,c,d,r$)\[\exists x\exists y\left(abcdx=1\wedge ry=1\wedge y\in J_{a,b,n,K}\right)\]gives our desired existential definition. $\blacksquare$
\end{proof}

Another conceptual use of Theorem \ref{triunfo} is that, given a number field $K$, we can give a diophantine definition for the property that two finite subsets of $\Omega_K^{<\infty}$ have empty intersection.

\begin{proposition}\label{intvacia}Given a number field $K$, the set\[\left\{\left(a,b,c,d,a',b',c',d'\right)\in \left(K^\times\right)^8:\Omega_{a,b,c,d,K}\cap\Omega_{a',b',c',d',K}=\emptyset\right\}\]is diophantine in $K$, with a defining polynomial with integer coefficients.

\end{proposition}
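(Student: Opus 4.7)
The plan is to reduce the condition $\Omega_{a,b,c,d,K}\cap\Omega_{a',b',c',d',K}=\emptyset$ to a single existential formula via the equivalence
\[
\Omega_{a,b,c,d,K}\cap\Omega_{a',b',c',d',K}=\emptyset \iff \exists\, r\in J_{a,b,c,d,K}\ \text{such that}\ 1-r\in J_{a',b',c',d',K}.
\]
Once this equivalence is established, the proposition will follow immediately: Lemma \ref{jabndiof} (with $n=1$, applied separately to each of the two tuples) guarantees that both of these membership conditions are diophantine over $K$ with uniform integer-coefficient defining polynomials, and Lemma \ref{intofdiof} combines them together with the linear relation linking $r$ and $1-r$ into a single polynomial equation, preserving integrality because that construction only introduces the fixed integer $n_K$.

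For the equivalence, only the forward direction contains any real content, and it reduces cleanly to the Chinese Remainder Theorem. Assuming disjointness of the two finite sets of primes, the ideals $\prod_{\mathfrak{p}\in\Omega_{a,b,c,d,K}}\mathfrak{p}$ and $\prod_{\mathfrak{q}\in\Omega_{a',b',c',d',K}}\mathfrak{q}$ are coprime, so CRT yields $r\in\mathcal{O}_K$ that lies in every $\mathfrak{p}\in\Omega_{a,b,c,d,K}$ while satisfying $r\equiv 1\pmod{\mathfrak{q}}$ for every $\mathfrak{q}\in\Omega_{a',b',c',d',K}$. By Corollary \ref{jabjab}, this $r$ belongs to $J_{a,b,c,d,K}$ and $1-r$ belongs to $J_{a',b',c',d',K}$. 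The converse is a one-line check: if some $\mathfrak{p}$ lay in both sets, then $r$ and $1-r$ would simultaneously lie in $\mathfrak{p}(\mathcal{O}_K)_\mathfrak{p}$, forcing $1\in\mathfrak{p}(\mathcal{O}_K)_\mathfrak{p}$, which is absurd.

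I do not anticipate any genuine obstacle here; the heavy machinery has been assembled in Section \ref{preliminares} and in Corollary \ref{jabjab}, and the present statement is essentially its payoff. The mildest point of bookkeeping is verifying that the final composed defining polynomial retains integer coefficients, but this follows automatically from the uniform integer-coefficient output of Lemma \ref{jabndiof} combined with the purely integer-scaled combination step of Lemma \ref{intofdiof}.
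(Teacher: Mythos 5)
Your proposal is correct and follows essentially the same route as the paper: the paper likewise reduces disjointness to the condition $1\in J_{a,b,K}+J_{c,d,K}+J_{a',b',K}+J_{c',d',K}=J_{a,b,c,d,K}+J_{a',b',c',d',K}$ and invokes Lemma \ref{jabndiof} with $n=1$, with your CRT argument supplying exactly the justification the paper leaves implicit in its sum identity. The only bookkeeping item you omit is the clause $\exists w\left(abcda'b'c'd'w=1\right)$ needed to cut the defined set down to $\left(K^\times\right)^8$, which the paper includes in its final formula.
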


\begin{proof}If $a,b,c,d,a',b',c',d'\in K^\times$, we have\[J_{a,b,c,d,K}+J_{a',b',c',d',K}=\bigcap_{\mathfrak{p}\in\Omega_{a,b,c,d,K}\cap \Omega_{a',b',c',d',K}}\mathfrak{p}\left(\mathcal{O}_K\right)_\mathfrak{p},\]thus $\Omega_{a,b,c,d,K}\cap\Omega_{a',b',c',d',K}=\emptyset$ if and only if $1\in J_{a,b,c,d,K}+J_{a',b',c',d',K}$. Lemma \ref{jabndiof} with $n=1$ shows that the formula in the free variables $a,b,c,d,a',b',c',d'$\[\exists x\exists y\left(abcda'b'c'd'x=1\wedge y\in J_{a,b,c,d,K}\wedge 1-y\in J_{a',b',c',d',K}\right)\]gives our desired existential definition. $\blacksquare$

\end{proof}

\section{First-order description of Campana points}\label{primerdescripcion}

We can now put the above results together to produce our desired first-order description of Campana points.

\begin{theorem}\label{campaanaa}Given a number field $K$, the set\[\left\{\left(a,b,c,d,r\right)\in \left(K^\times\right)^4\times K:r\in C_{K,\left(\Omega_K^\infty\cup\Omega_{a,b,c,d,K}\right),n}\right\}\cup \left\{\left(x_1,x_2,x_3,x_4,x_5\right)\in K^5:x_1x_2x_3x_4=0\right\}\]is $\forall\exists$-defined in $K$, with a defining polynomial with integer coefficients.

\end{theorem}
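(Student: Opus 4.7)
The plan is to reformulate the membership condition "$r \in C_{K,\Omega_K^\infty \cup \Omega_{a,b,c,d,K},n}$" as a $\forall$-statement ranging over quadruples $(a',b',c',d') \in (K^\times)^4$, and then collapse everything (together with the trivial disjuncts $abcd = 0$ and $r = 0$) into a single $\forall\cdots\exists\cdots(P=0)$ shape with integer-coefficient defining polynomial, using the building blocks from Sections \ref{preliminares} and \ref{pruebadelteom}.

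The key reformulation, which is exactly where Theorem \ref{triunfo} is used, is: for $(a,b,c,d) \in (K^\times)^4$ and $r \in K^\times$, one has $r \in C_{K,\Omega_K^\infty \cup \Omega_{a,b,c,d,K},n}$ if and only if, for every $(a',b',c',d') \in (K^\times)^4$ with $\Omega_{a',b',c',d',K} \cap \Omega_{a,b,c,d,K} = \emptyset$ and $r^{-1} \in J_{a',b',c',d',K}$, one has $r^{-1} \in J_{a',b',c',d',n,K}$. The forward direction is immediate from Lemma \ref{pirueta}: the hypothesis encodes $\nu_\mathfrak{p}(r) \leq -1$ at every $\mathfrak{p} \in \Omega_{a',b',c',d',K}$, and since by disjointness such $\mathfrak{p}$ lies outside $\Omega_{a,b,c,d,K}$, the Campana condition forces $\nu_\mathfrak{p}(r) \leq -n$. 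For the converse, given any $\mathfrak{p} \in \Omega_K^{<\infty}\setminus \Omega_{a,b,c,d,K}$ with $-n < \nu_\mathfrak{p}(r) < 0$, Theorem \ref{triunfo} produces $(a',b',c',d')$ with $\Omega_{a',b',c',d',K} = \{\mathfrak{p}\}$, which violates the hypothesis. Consequently, the desired set is defined by the universal statement: for all $(a',b',c',d') \in K^4$, at least one of (i) $abcd \cdot a'b'c'd' \cdot r = 0$; (ii) $\Omega_{a',b',c',d',K} \cap \Omega_{a,b,c,d,K} \neq \emptyset$; (iii) $r^{-1} \notin J_{a',b',c',d',K}$; (iv) $r^{-1} \in J_{a',b',c',d',n,K}$ holds. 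Here (i) is a polynomial equation, (ii) and (iii) are negations of diophantine conditions (Proposition \ref{intvacia} and Lemma \ref{jabndiof} with $n=1$, together with the standard existential encoding of $r^{-1}$ by "$\exists t\,(rt = 1 \land \ldots)$"), and (iv) is diophantine (Lemma \ref{jabndiof}); all the defining polynomials can be taken with integer coefficients by the uniform construction in Sections \ref{preliminares}--\ref{pruebadelteom}.

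The last step is to collapse this shape into a $\forall\exists (P=0)$ formula. Writing (ii) and (iii) as $\forall Y_1\,(P_1 \neq 0)$ and $\forall Y_2\,(P_2 \neq 0)$, and (iv) as $\exists Y_3\,(P_3 = 0)$, and pulling these quantifiers through the outer disjunction, puts $\forall (a',b',c',d')\,\forall Y_1\,\forall Y_2\,\exists Y_3$ in front of a body "$abcd \cdot a'b'c'd' \cdot r \cdot P_3 = 0 \,\lor\, P_1 \neq 0 \,\lor\, P_2 \neq 0$". Lemma \ref{intofdiof} fuses the two inequations into $P_1^2 - n_K P_2^2 \neq 0$ (with $n_K \in \mathbb{Z}$ a suitable non-square for $K$), and with $M \coloneqq P_1^2 - n_K P_2^2$ and $N \coloneqq abcd \cdot a'b'c'd' \cdot r \cdot P_3$ the body reduces to "$M \neq 0 \lor N = 0$", which is equivalent to $\exists u\,[(Mu-1)N = 0]$ and is absorbed by one extra existential quantifier. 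The main obstacle is the equivalence in the second paragraph, since everything that follows is a mechanical quantifier rearrangement; careful bookkeeping of degrees and coefficients is deferred to Section \ref{explicito}.
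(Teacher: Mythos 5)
Your proposal is correct and follows essentially the same route as the paper: Theorem \ref{triunfo} to range over all finite sets of places via quadruples $\left(a',b',c',d'\right)$, Corollary \ref{jabjab} and Lemma \ref{pirueta} to encode the conditions $\nu_\mathfrak{p}\left(r\right)\leq -1$ and $\nu_\mathfrak{p}\left(r\right)\leq -n$, Proposition \ref{intvacia} for disjointness, and the equivalence \eqref{ae} together with Lemma \ref{intofdiof} to collapse the resulting implication into a single $\forall\exists\left(P=0\right)$ formula. Your writeup of the key equivalence (in particular the converse via singletons $\Omega_{a',b',c',d',K}=\left\{\mathfrak{p}\right\}$) is in fact slightly more explicit than the paper's.
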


\begin{proof}If $r,a,b,c,d\in K^\times$, then $r\in C_{K,\left(\Omega_K^\infty\cup\Omega_{a,b,c,d,K}\right),n}$ if and only if each $\mathfrak{p}\in\Omega_K^{<\infty}\setminus \Omega_{a,b,c,d,K}$ dividing a reduced denominator of $r\in K_{\mathfrak{p}}$, divides it with valuation at least $n$.

Observe that, by Theorem \ref{triunfo}, any finite set of non-archimedean places of $K$ can be attained as $\Omega_{a',b',c',d',K}$ for some $a',b',c',d'\in K^\times$. Thus an equivalent reformulation of the previous paragraph is the following: for all $a',b',c',d'\in K^\times$ such that $\Omega_{a,b,c,d,K}\cap \Omega_{a',b',c',d',K}=\emptyset$, if $r\in \left(J_{a',b',c',d',K}\setminus\left\{0\right\}\right)^{-1}$ (i.e., if $\mathfrak{p}\in\Omega_{a',b',c',d',K}$, a reduced denominator of $r\in K_{\mathfrak{p}}$ is divisible by $\mathfrak{p}$, by Corollary \ref{jabjab}), then $r\in \left(J_{a',b',c',d',n,K}\setminus\left\{0\right\}\right)^{-1}$ (i.e., reduced denominators of $r$ with respect to all primes in $\Omega_{a',b',c',d',K}$ have valuations at least $n$, by Lemma \ref{pirueta}). This can all be expressed by the first-order formula\[\forall a'\forall b'\forall c'\forall d'\left[\begin{pmatrix}abcda'b'c'd'\neq 0\\ \Omega_{a,b,c,d,K}\cap\Omega_{a',b',c',d',K}=\emptyset\\r\in \left(J_{a',b',c',d',K}\setminus\left\{0\right\}\right)^{-1}\end{pmatrix}\Rightarrow r\in \left(J_{a',b',c',d',n,K}\setminus\left\{0\right\}\right)^{-1}\right],\]\begin{equation}\label{campanadef2}\forall a'\forall b'\forall c'\forall d'\left[\neg\begin{pmatrix}abcda'b'c'd'\neq 0\\ \Omega_{a,b,c,d,K}\cap\Omega_{a',b',c',d',K}=\emptyset\\r\in \left(J_{a',b',c',d',K}\setminus\left\{0\right\}\right)^{-1}\end{pmatrix}\vee r\in \left(J_{a',b',c',d',n,K}\setminus\left\{0\right\}\right)^{-1}\right].\end{equation}In this formula, what is inside the main brackets is the disjunction of a universal formula (by Lemma \ref{intofdiof} and Propositions \ref{tres} and \ref{intvacia}) and an existential formula (by Proposition \ref{tres}). In general, if $k$ is a field and $P\in k\left[\mathbf{x}\right]$, $Q\in k\left[\mathbf{z}\right]$, then it is straightforward to prove\begin{equation}\label{ae}k\vDash \left[\left(\forall \mathbf{x} \left(P\left(\mathbf{x}\right)\neq 0\right)\right)\vee \left(\exists \mathbf{z}\left(Q\left(\mathbf{z}\right)=0\right)\right)\right]\Longleftrightarrow \left[\forall\mathbf{x}\exists y\exists\mathbf{z}\left(\left(yP\left(\mathbf{x}\right)-1\right)Q\left(\mathbf{z}\right)=0\right)\right],\end{equation}so the formula \eqref{campanadef2} is indeed a $\forall\exists$-definition for the set in the statement. $\blacksquare$

\end{proof}

In particular, given a number field $K$, a natural number $n\in\mathbb{Z}_{\geq 1}$, and a finite subset $S$ of $\Omega_K$ containing $\Omega_K^{\infty}$, $C_{K,S,n}$ can be $\forall\exists$-defined as follows: use Theorem \ref{triunfo} to pick $a,b,c,d\in K^\times$ such that $\Omega_{a,b,c,d,K}=S\cap\Omega_K^{<\infty}$. Then our set is defined by the formula\[\forall a'\forall b'\forall c'\forall d'\left[\begin{pmatrix}abcda'b'c'd'\neq 0\\ \Omega_{a,b,c,d,K}\cap\Omega_{a',b',c',d',K}=\emptyset\\r\in \left(J_{a',b',c',d',K}\setminus\left\{0\right\}\right)^{-1}\end{pmatrix}\Rightarrow r\in \left(J_{a',b',c',d',n,K}\setminus\left\{0\right\}\right)^{-1}\right],\]\[\forall a'\forall b'\forall c'\forall d'\left[\neg\begin{pmatrix}abcda'b'c'd'\neq 0\\ \Omega_{a,b,c,d,K}\cap\Omega_{a',b',c',d',K}=\emptyset\\r\in \left(J_{a',b',c',d',K}\setminus\left\{0\right\}\right)^{-1}\end{pmatrix}\vee r\in \left(J_{a',b',c',d',n,K}\setminus\left\{0\right\}\right)^{-1}\right].\]The difference between this formula and the one in the proof of Theorem \ref{campaanaa} is that this one has only one free variable (namely, $r$), and $a,b,c,d\in K^\times$ are fixed nonzero elements of $K$. In particular, the only parameters that may arise in this formula are $a,b,c,d$.

We then have the following immediate weak version of the above:

\begin{theorem}For all $n\in\mathbb{Z}_{\geq 1}$, the set $C_{K,S,n}$ is $\forall\exists$-definable in $K$.

\end{theorem}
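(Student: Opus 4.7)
The plan is to deduce the statement immediately from Theorem \ref{campaanaa} by specializing its first four parameters. Since $S$ contains $\Omega_K^\infty$, we have $S = \Omega_K^\infty \cup (S \cap \Omega_K^{<\infty})$, and the finite subset $S \cap \Omega_K^{<\infty}$ of $\Omega_K^{<\infty}$ is precisely the kind of set parametrized by Theorem \ref{triunfo}. The first step will be to invoke that theorem to produce $a,b,c,d \in K^\times$ with $\Omega_{a,b,c,d,K} = S \cap \Omega_K^{<\infty}$, so that $\Omega_K^\infty \cup \Omega_{a,b,c,d,K} = S$ and consequently $C_{K,S,n} = C_{K,\Omega_K^\infty \cup \Omega_{a,b,c,d,K},n}$.

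Next, I would substitute these fixed nonzero elements $a,b,c,d$ into the $\forall\exists$-formula supplied by Theorem \ref{campaanaa}. Since the substituted values satisfy $abcd \neq 0$, the auxiliary component $\{x_1x_2x_3x_4 = 0\}$ appearing in the statement of that theorem does not contribute to the slice at $(a,b,c,d)$, so the resulting formula in the single free variable $r$ defines exactly $C_{K,\Omega_K^\infty \cup \Omega_{a,b,c,d,K},n} = C_{K,S,n}$. The explicit formula is the one displayed in the paragraph immediately following Theorem \ref{campaanaa}, now viewed with $a,b,c,d$ as parameters drawn from $K$ rather than as free variables.

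There is no genuine obstacle here: all the nontrivial content (uniform diophantine definitions for $J_{a',b',c',d',K}$ and $J_{a',b',c',d',n,K}$, the intersection-emptiness predicate from Proposition \ref{intvacia}, and the collapse of a disjunction of a universal and an existential formula into a single $\forall\exists$-formula via \eqref{ae}) is already absorbed into Theorem \ref{campaanaa}. The only remaining items to verify are that specializing free variables to constants preserves the $\forall\exists$ prefix, which is immediate, and that suitable parameters $a,b,c,d$ realizing $S \cap \Omega_K^{<\infty}$ exist, which is exactly the content of Theorem \ref{triunfo}. This yields the claimed $\forall\exists$-definability of $C_{K,S,n}$ in $K$ with parameters.
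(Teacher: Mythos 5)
Your proposal is correct and matches the paper's own derivation exactly: the paper likewise invokes Theorem \ref{triunfo} to choose $a,b,c,d\in K^\times$ with $\Omega_{a,b,c,d,K}=S\cap\Omega_K^{<\infty}$ and then specializes the $\forall\exists$-formula of Theorem \ref{campaanaa} to these fixed parameters, leaving $r$ as the only free variable. Your additional observation that the auxiliary component $\left\{x_1x_2x_3x_4=0\right\}$ is harmless on the slice $abcd\neq 0$ is a correct and welcome clarification.
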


Observe that the same method can be applied to get a $\forall\exists$-definition of $S$-integers of a number field $K$ (for a given finite subset $S$ of $\Omega_K$ containing $\Omega_K^{\infty}$) in a uniform way, with respect to our parametrization given in Theorem \ref{triunfo}. This recovers the main result in \cite{MR2530851}.

\begin{theorem}\label{integral}Given a number field $K$, the set\begin{multline*}\left\{\left(a,b,c,d,r\right)\in \left(K^\times\right)^4\times K:r\in \bigcap_{\mathfrak{p}\in\Omega_K^{<\infty}\setminus\Omega_{a,b,c,d,K}}\left(\mathcal{O}_{K}\right)_\mathfrak{p}\right\}\\\cup\left\{\left(x_1,x_2,x_3,x_4,x_5\right)\in K^5:x_1x_2x_3x_4=0\right\}\end{multline*}is $\forall\exists$-defined in $K$, with a defining polynomial with integer coefficients.

\end{theorem}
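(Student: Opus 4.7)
My plan is to mimic the proof of Theorem \ref{campaanaa} almost line by line, replacing the Campana-style right-hand side ``$r\in (J_{a',b',c',d',n,K}\setminus\{0\})^{-1}$'' with a condition that holds precisely when $\Omega_{a',b',c',d',K}$ is empty. The intuition is that $\mathcal{O}_{K,S}$ corresponds morally to $n=\infty$, and at ``$n=\infty$'' the Campana right-hand side becomes vacuously false for $r\in K^\times$, so the only way to satisfy the implication is to rule out the antecedent. But if the antecedent is blindly required to be false, one runs into trouble in the degenerate case $\Omega_{a',b',c',d',K}=\emptyset$ (attained, for instance, by taking $a'=b'=c'=d'$ generic): there Lemma \ref{pirueta} makes $(J_{a',b',c',d',K}\setminus\{0\})^{-1}=K^\times$, so the antecedent reduces to $r\neq 0$ and the formula would incorrectly force $r=0$. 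The fix is to inject ``$\Omega_{a',b',c',d',K}\neq\emptyset$'' as an additional antecedent.

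Concretely, I would propose the defining formula
\[\forall a'\forall b'\forall c'\forall d'\left[(abcd=0)\vee \neg\begin{pmatrix}a'b'c'd'\neq 0\\ \Omega_{a,b,c,d,K}\cap\Omega_{a',b',c',d',K}=\emptyset\\ 1\notin J_{a',b',c',d',K}\\ r\in (J_{a',b',c',d',K}\setminus\{0\})^{-1}\end{pmatrix}\right],\]
where, by Corollary \ref{jabjab}, the third row is equivalent to $\Omega_{a',b',c',d',K}\neq\emptyset$. For correctness, the case $abcd=0$ places $(a,b,c,d,r)$ in the second piece of the stated union and trivially satisfies the formula; assume then $abcd\neq 0$. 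If $r\in\bigcap_{\mathfrak{p}\in\Omega_K^{<\infty}\setminus\Omega_{a,b,c,d,K}}(\mathcal{O}_K)_\mathfrak{p}$, then for any $(a',b',c',d')\in (K^\times)^4$ producing nonempty $\Omega_{a',b',c',d',K}$ disjoint from $\Omega_{a,b,c,d,K}$, picking any $\mathfrak{p}\in\Omega_{a',b',c',d',K}$ gives $\nu_\mathfrak{p}(r)\geq 0$, hence $r\notin (J_{a',b',c',d',K}\setminus\{0\})^{-1}$ by Lemma \ref{pirueta}. Conversely, if $\nu_\mathfrak{p}(r)\leq -1$ for some $\mathfrak{p}\notin\Omega_{a,b,c,d,K}$, Theorem \ref{triunfo} produces $(a',b',c',d')$ with $\Omega_{a',b',c',d',K}=\{\mathfrak{p}\}$ witnessing all four conditions simultaneously, contradicting the formula.

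It remains to massage the formula into prenex $\forall\exists$ form. Inside the $\neg$, ``$a'b'c'd'\neq 0$'' is quantifier-free, the empty-intersection condition is diophantine by Proposition \ref{intvacia}, ``$1\in J_{a',b',c',d',K}$'' is diophantine by Lemma \ref{jabndiof}, and ``$r\in (J_{a',b',c',d',K}\setminus\{0\})^{-1}$'' is diophantine by Proposition \ref{tres}. After negating the conjunction, the four disjuncts have respective quantifier shapes quantifier-free, universal, existential, universal; the identity $\phi\vee\forall y\,\psi(y)\vee\exists x\,\chi(x)\equiv \forall y\,\exists x\,(\phi\vee\psi(y)\vee\chi(x))$, applied twice for the two universal blocks, prenexes the bracketed expression as $\forall\exists$. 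Absorbing the outer $\forall a'\forall b'\forall c'\forall d'$ into the universal block keeps the formula $\forall\exists$, and Lemma \ref{intofdiof} together with the trick $P=0\vee Q=0\iff PQ=0$ collapses the quantifier-free core to a single polynomial equation whose coefficients remain in $\mathbb{Z}$, inherited from Propositions \ref{intvacia}, \ref{tres} and Lemma \ref{jabndiof}. The main conceptual obstacle in the adaptation is simply recognising that the degenerate $\Omega_{a',b',c',d',K}=\emptyset$ case must be explicitly excluded -- a subtlety that was silently bypassed in Theorem \ref{campaanaa} because the Campana right-hand side happened to be vacuously true in that case as well.
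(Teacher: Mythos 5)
Your proposal is correct and coincides with the paper's own proof: the paper uses exactly the same formula (with $abcda'b'c'd'\neq 0$ folded into the antecedent rather than split off as a disjunct $abcd=0$), including the same key device of inserting ``$1\notin J_{a',b',c',d',K}$'' to exclude the degenerate case $\Omega_{a',b',c',d',K}=\emptyset$, and the same appeal to Theorem \ref{triunfo} for singletons together with Propositions \ref{tres}, \ref{intvacia} and Lemma \ref{jabndiof} for the $\forall\exists$ prenexing.
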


\begin{proof}By Corollary \ref{jabjab}, the predication ``$r\not\in \left(J_{a,b,c,d,K}\setminus\left\{0\right\}\right)^{-1}$'' (where $a,b,c,d\in K^\times$) means that the valuations of $r$ with respect to all primes in $\Omega_{a,b,c,d,K}$ are not all negative. If we quantify over all $a,b,c,d\in K^\times$, Theorem \ref{triunfo} guarantees that all finite subsets of $\Omega_K^{<\infty}$ will arise in this way; in particular, singletons are attained, and that is all that we need: all the rest, excepting those $a,b,c,d\in K^\times$ giving $\Omega_{a,b,c,d,K}=\emptyset$ (which by Corollary \ref{jabjab} can be first-order defined as $1\in J_{a,b,c,d,K}$), will give redundant information. Thus a formula in the free variables $a,b,c,d,r$ defining the desired set is\[\forall a'\forall b'\forall c'\forall d'\left(\left[\substack{abcda'b'c'd'\neq 0\\\Omega_{a,b,c,d,K}\cap\Omega_{a',b',c',d',K}=\emptyset}\wedge 1\not\in J_{a',b',c',d',K}\right]\Rightarrow r\not\in \left(J_{a',b',c',d',K}\setminus\left\{0\right\}\right)^{-1}\right),\]\[\forall a'\forall b'\forall c'\forall d'\left(\neg\left[\substack{abcda'b'c'd'\neq 0\\\Omega_{a,b,c,d,K}\cap\Omega_{a',b',c',d',K}=\emptyset}\wedge 1\not\in J_{a',b',c',d',K}\right]\vee r\not\in \left(J_{a',b',c',d',K}\setminus\left\{0\right\}\right)^{-1}\right),\]which a $\forall\exists$-formula by Proposition \ref{tres}, Lemma \ref{jabndiof}, and Proposition \ref{intvacia}. $\blacksquare$

\end{proof}

\begin{corollary}\label{integrality}Given any number field $K$ and a finite subset $S$ of $\Omega_K$ containing $\Omega_K^{\infty}$, the set $\mathcal{O}_{K,S}$ of elements of $K$ which are integral outside $S$ is $\forall\exists$-defined in $K$.

\end{corollary}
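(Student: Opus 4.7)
The plan is to deduce the corollary directly from Theorem \ref{integral} by specializing the parameters $a,b,c,d$ to witnesses provided by Theorem \ref{triunfo}.

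First I would let $S_0 \coloneqq S \setminus \Omega_K^{\infty}$, which is a finite subset of $\Omega_K^{<\infty}$, and apply Theorem \ref{triunfo} to produce specific elements $a_0,b_0,c_0,d_0 \in K^\times$ such that $\Omega_{a_0,b_0,c_0,d_0,K} = S_0$. With this choice one has
\[
\bigcap_{\mathfrak{p} \in \Omega_K^{<\infty} \setminus \Omega_{a_0,b_0,c_0,d_0,K}} (\mathcal{O}_K)_\mathfrak{p} = \bigcap_{\mathfrak{p} \in \Omega_K^{<\infty} \setminus S} (\mathcal{O}_K)_\mathfrak{p} = \mathcal{O}_{K,S},
\]
using that $S$ contains $\Omega_K^{\infty}$ so that removing $\Omega_K^{\infty}$ does not alter the intersection.

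Next I would take the $\forall\exists$-formula in the free variables $a,b,c,d,r$ that defines the set in Theorem \ref{integral}, and substitute the constants $a_0,b_0,c_0,d_0$ for $a,b,c,d$. This yields a $\forall\exists$-formula $\varphi(r)$ in the single free variable $r$, with parameters $a_0,b_0,c_0,d_0 \in K$. Since $a_0 b_0 c_0 d_0 \neq 0$, the tuple $(a_0,b_0,c_0,d_0,r)$ avoids the auxiliary piece $\{(x_1,\ldots,x_5) \in K^5 : x_1x_2x_3x_4 = 0\}$ appearing in Theorem \ref{integral}, so $\varphi(r)$ holds in $K$ if and only if $r$ lies in the first component of that union, i.e.\ if and only if $r \in \mathcal{O}_{K,S}$.

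There is no real obstacle here: Theorem \ref{integral} carried out the hard work of producing a uniform $\forall\exists$-formula, and Theorem \ref{triunfo} provides the parametrization needed to hit the specific set $S_0$. Specializing free variables to constants preserves the quantifier prefix, so $\varphi(r)$ is still $\forall\exists$, completing the argument. $\blacksquare$
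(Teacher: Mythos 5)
Your proposal is correct and is essentially the paper's own argument: the paper likewise invokes Theorem \ref{triunfo} to pick $a,b,c,d\in K^\times$ with $\Omega_{a,b,c,d,K}=S\cap\Omega_K^{<\infty}$ and then specializes the $\forall\exists$-formula from Theorem \ref{integral} to these constants. Your additional remark that $a_0b_0c_0d_0\neq 0$ rules out the auxiliary component of the union is a small but welcome bit of explicitness that the paper leaves implicit.
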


\begin{proof}Use Theorem \ref{triunfo} to pick $a,b,c,d\in K^\times$ such that $\Omega_{a,b,c,d,K}=S\cap\Omega_K^{<\infty}$. Then the formula\[\forall a'\forall b'\forall c'\forall d'\left(\left[\substack{abcda'b'c'd'\neq 0\\\Omega_{a,b,c,d,K}\cap\Omega_{a',b',c',d',K}=\emptyset}\wedge 1\not\in J_{a',b',c',d',K}\right]\Rightarrow r\not\in \left(J_{a',b',c',d',K}\setminus\left\{0\right\}\right)^{-1}\right),\]\[\forall a'\forall b'\forall c'\forall d'\left(\neg\left[\substack{abcda'b'c'd'\neq 0\\\Omega_{a,b,c,d,K}\cap\Omega_{a',b',c',d',K}=\emptyset}\wedge 1\not\in J_{a',b',c',d',K}\right]\vee r\not\in \left(J_{a',b',c',d',K}\setminus\left\{0\right\}\right)^{-1}\right),\]defines $\mathcal{O}_{K,S}$ in $K$. $\blacksquare$

\end{proof}

As before, the difference between the formula considered in the proof of Theorem \ref{integral} and the one considered in the proof of Corollary \ref{integrality} is that the latter one has only one free variable (namely, $r$), and $a,b,c,d\in K^\times$ are fixed nonzero elements of $K$. In particular, this formula may not be given in terms of a defining polynomial with integer coefficients, although its only parameters will be (at most) $a,b,c,d$.

\section{Counting variables and bounding the degree}\label{explicito}

We now return to the formulas in Section \ref{primerdescripcion} and we count the number of quantifiers (that is, bounded variables) involved in a possible defining formula, and we also bound the degree of the considered polynomial. In order to do that, we will make use of normic homogeneous forms (see \cite[§1.2.1]{MR3729254}) to optimize Remark \ref{bounddegree}. We will need the following straightforward generalization of the lemma in \cite[p. 34]{MR0457396}:

\begin{lemma}\label{mmarcus}Let $k$ be a field and let $K/k$ and $L/k$ be two finite separable extensions of fields. If $\left[KL:k\right]=\left[K:k\right]\left[L:k\right]$, then given two field embeddings $K\to \overline{k}$ and $L\to\overline{k}$ fixing $k$ pointwise, there exists a field embedding $KL\to \overline{k}$ extending both.
\end{lemma}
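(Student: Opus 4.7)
The plan is to use the primitive element theorem together with the degree hypothesis to reduce the construction of the extension to choosing a compatible root of a minimal polynomial. Since $K/k$ is finite and separable, the primitive element theorem furnishes some $\alpha\in K$ with $K=k(\alpha)$. Let $f\in k[x]$ be its minimal polynomial over $k$, so that $\deg f=[K:k]$, and note that $KL=L(\alpha)$.

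Next, I would compute $[L(\alpha):L]=[KL:L]=[KL:k]/[L:k]=[K:k]$, where the last equality uses the hypothesis $[KL:k]=[K:k][L:k]$. Letting $g\in L[x]$ denote the minimal polynomial of $\alpha$ over $L$, we have $g\mid f$ in $L[x]$, both polynomials are monic, and both have degree $[K:k]$ by the previous computation. Hence $g=f$, so the minimal polynomial of $\alpha$ over $L$ is still $f$.

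Now I would construct the desired embedding $\rho\colon KL\to\overline{k}$ as follows. Since $\alpha$ has minimal polynomial $f$ over $L$, there is a ring isomorphism $L[x]/(f)\cong L(\alpha)=KL$ sending the class of $x$ to $\alpha$. Applying $\tau$ to coefficients and using that $f\in k[x]$ together with $\tau|_k=\id$, we get $\tau(f)=f$. I then define $\rho$ by $\rho(\ell)\coloneqq\tau(\ell)$ for $\ell\in L$ and $\rho(\alpha)\coloneqq\sigma(\alpha)$; this is well-defined precisely because $\sigma(\alpha)$ is a root of $f=\tau(f)$ in $\overline{k}$ (indeed, $\sigma$ fixes $k$). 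A direct check shows $\rho|_L=\tau$ by construction, while $\rho|_K=\sigma$ follows because both maps agree on $k$ (trivially) and send the generator $\alpha$ to $\sigma(\alpha)$.

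The main obstacle is the step showing that $f$ remains the minimal polynomial of $\alpha$ over $L$; this is precisely where the linear-disjointness hypothesis $[KL:k]=[K:k][L:k]$ enters. Without it, the minimal polynomial of $\alpha$ over $L$ could be a proper divisor of $f$, and then $\sigma(\alpha)$ would generically fail to be a root of it, so one could not extend $\tau$ to $KL$ while prescribing $\alpha\mapsto\sigma(\alpha)$.
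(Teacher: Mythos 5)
Your proof is correct: the degree hypothesis forces the minimal polynomial of a primitive element $\alpha$ of $K/k$ to remain irreducible over $L$, so the standard extension theorem lets you extend $\tau$ to $KL=L(\alpha)$ while sending $\alpha\mapsto\sigma(\alpha)$, and this restricts to $\sigma$ on $K$. The paper gives no proof, only citing the lemma in Marcus for $k=\mathbb{Q}$; your argument is exactly that proof transported to an arbitrary base field, i.e.\ the ``straightforward generalization'' the paper intends.
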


We will also need the following two facts.

\begin{lemma}\label{ksqrtm}Let $K$ be a number field. For all but finitely many primes $p\in\mathbb{Z}_{\geq 1}$, we have $\left[K\left(\sqrt[n]{p}\right):K\right]=n$ for all $n\in\mathbb{Z}_{\geq 1}$.
\end{lemma}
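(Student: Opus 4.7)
The plan is to exclude exactly those primes that ramify in $K/\mathbb{Q}$ (a finite set, namely those dividing $\operatorname{disc}(K/\mathbb{Q})$) and then give a uniform ramification argument that works for all $n$ simultaneously.

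First I would recall that for every prime $p\in\mathbb{Z}_{\geq 1}$ and every $n\in\mathbb{Z}_{\geq 1}$, the polynomial $x^{n}-p\in\mathbb{Z}[x]$ is Eisenstein at $p$. Therefore it is irreducible in $\mathbb{Q}[x]$, so $[\mathbb{Q}(\sqrt[n]{p}):\mathbb{Q}]=n$, and moreover $p$ is \emph{totally ramified} in $\mathbb{Q}(\sqrt[n]{p})/\mathbb{Q}$: the unique prime $\mathfrak{q}$ of $\mathbb{Q}(\sqrt[n]{p})$ above $p$ satisfies $e(\mathfrak{q}/p)=n$.

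Next I would fix a prime $p$ not dividing $\operatorname{disc}(K/\mathbb{Q})$, so that $p$ is unramified in $K$, i.e.\ $e(\mathfrak{p}/p)=1$ for every prime $\mathfrak{p}$ of $K$ lying over $p$. Set $M\coloneqq K(\sqrt[n]{p})=K\cdot\mathbb{Q}(\sqrt[n]{p})$, pick a prime $\mathfrak{P}$ of $M$ above $\mathfrak{p}$, and let $\mathfrak{q}\coloneqq \mathfrak{P}\cap\mathbb{Q}(\sqrt[n]{p})$. Multiplicativity of ramification in the tower $\mathbb{Q}\subseteq\mathbb{Q}(\sqrt[n]{p})\subseteq M$ yields $e(\mathfrak{P}/p)=e(\mathfrak{P}/\mathfrak{q})\cdot e(\mathfrak{q}/p)\geq n$, while multiplicativity in the tower $\mathbb{Q}\subseteq K\subseteq M$ yields $e(\mathfrak{P}/p)=e(\mathfrak{P}/\mathfrak{p})\cdot e(\mathfrak{p}/p)=e(\mathfrak{P}/\mathfrak{p})$. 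Combining these gives $e(\mathfrak{P}/\mathfrak{p})\geq n$.

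On the other hand, $\sqrt[n]{p}$ is a root of $x^{n}-p\in K[x]$, so its minimal polynomial over $K$ divides $x^{n}-p$, giving $[M:K]\leq n$. Since $e(\mathfrak{P}/\mathfrak{p})\leq [M:K]$, the two bounds together force $[K(\sqrt[n]{p}):K]=[M:K]=n$, as required. The argument is uniform in $n$, so excluding the finite set of primes dividing $\operatorname{disc}(K/\mathbb{Q})$ suffices for all $n$ simultaneously; there is no real obstacle beyond correctly setting up the ramification bookkeeping, and Lemma \ref{mmarcus} is not needed here.
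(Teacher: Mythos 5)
Your proof is correct. It takes a genuinely different route from the paper's: the paper picks $p$ so that $\nu_{\mathfrak{P}}(p\mathcal{O}_K)=1$ for some prime $\mathfrak{P}$ of $\mathcal{O}_K$ above $p$ (any unramified $p$ works) and then applies Eisenstein's criterion \emph{for Dedekind domains} directly to $x^n-p$ over $\mathcal{O}_K$ at $\mathfrak{P}$, concluding irreducibility over $K$ in one line. You instead invoke Eisenstein only in its classical form over $\mathbb{Z}$, deduce that $p$ is totally ramified in $\mathbb{Q}(\sqrt[n]{p})$, and then trap $[K(\sqrt[n]{p}):K]$ between the ramification index $e(\mathfrak{P}/\mathfrak{p})\geq n$ (from multiplicativity in the two towers) and the trivial upper bound $n$. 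Your version is longer but needs only standard ramification bookkeeping rather than the generalized Eisenstein criterion; the paper's version is shorter and its excluded set of primes is in principle slightly smaller (it suffices that \emph{some} prime above $p$ occur to the first power, which can happen even for ramified $p$), though both exclusions are finite, which is all the lemma requires. You are also right that Lemma \ref{mmarcus} plays no role here.
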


\begin{proof}Fix a prime $p\in\mathbb{Z}_{\geq 1}$ such that $\nu_\mathfrak{P}\left(p\mathcal{O}_K\right)=1$ for some prime ideal $\mathfrak{P}$ of $\mathcal{O}_K$ (for instance, take any of the all but finitely many primes in $\mathbb{Z}_{\geq 1}$ which are unramified in $K$). By Eisensteins's criterion for Dedekind domains, $x^n-p$ is irreducible in $K\left[x\right]$ for all $n\in\mathbb{Z}_{\geq 1}$. $\blacksquare$

\end{proof}

\begin{lemma}\label{agregar}Fix a field $k$ and $n\in\mathbb{Z}_{\geq 1}$. Let $L/k$ be a finite separable extension of fields. If $y_1,\cdots,y_n$ are algebraically independent indeterminates, we have\[\left[L:k\right]=\left[L\left(y_1,\cdots,y_n\right):k\left(y_1,\cdots,y_n\right)\right].\]
\end{lemma}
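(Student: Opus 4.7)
The plan is to reduce to the case $n=1$ and then exhibit an explicit minimal polynomial for a primitive element that survives the base change. First I would handle the reduction: assuming the $n=1$ case, apply it iteratively via the tower
\[
k(y_1,\ldots,y_n) \;\subseteq\; k(y_1,\ldots,y_n)(\alpha) = L(y_1,\ldots,y_n),
\]
using at each stage that adjoining a fresh transcendental over a field gives a purely transcendental extension over which a finite separable extension retains the same degree. Concretely, set $k_i \coloneqq k(y_1,\ldots,y_i)$, $L_i \coloneqq L(y_1,\ldots,y_i)$, note that $y_{i+1}$ is transcendental over $L_i$, and reduce from $[L_n:k_n]=[L:k]$ to $n-1$ applications of the single-variable statement, each of the form $[L_{i+1}:k_{i+1}] = [L_i:k_i]$.

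For the base case $n=1$, I would invoke the primitive element theorem (which applies since $L/k$ is finite separable) to write $L = k(\alpha)$, let $f(x)\in k[x]$ be the minimal polynomial of $\alpha$ over $k$, and observe that
\[
L(y) = k(\alpha)(y) = k(y)(\alpha),
\]
so that $[L(y):k(y)]$ is the degree of the minimal polynomial of $\alpha$ over $k(y)$. Thus it suffices to show that $f(x)$ remains irreducible when viewed in $k(y)[x]$. The minimal polynomial of $\alpha$ over $k(y)$ then divides $f$, and irreducibility forces equality, yielding $[L(y):k(y)] = \deg f = [L:k]$.

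To prove irreducibility of $f$ in $k(y)[x]$, I would first show irreducibility in the polynomial ring $k[y][x] = k[x,y]$: any factorization $f(x) = g(x,y)h(x,y)$ in $k[x,y]$, viewed with respect to the $y$-degree, forces both $g$ and $h$ to lie in $k[x]$ (since the left-hand side has $y$-degree $0$), and then the irreducibility of $f$ in $k[x]$ yields that $g$ or $h$ is a unit in $k$, hence in $k[x,y]$. Since $k[y]$ is a UFD with fraction field $k(y)$ and $f\in k[x]\subseteq k[y][x]$ is primitive as a polynomial in $x$ (its content with respect to $y$ is a nonzero constant), Gauss's lemma promotes irreducibility in $k[y][x]$ to irreducibility in $k(y)[x]$.

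The main (and only) subtle point is the irreducibility transfer in the last paragraph; everything else is bookkeeping via the primitive element theorem and induction. Since the whole argument rests on Gauss's lemma in a UFD, no essentially new idea is needed beyond the setup, and separability enters only through the existence of a primitive element (which in fact could be relaxed, but the stated hypothesis makes the argument immediate).
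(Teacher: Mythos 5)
Your proof is correct, but it takes a genuinely different route from the paper's for the essential direction. Both arguments invoke the primitive element theorem to write $L=k(\alpha)$, and both get the easy inequality $\left[L\left(y_1,\cdots,y_n\right):k\left(y_1,\cdots,y_n\right)\right]\leq\left[L:k\right]$ from the fact that the minimal polynomial of $\alpha$ over the larger base field divides $f$. For the other inequality, however, the paper counts embeddings: it extends each of the $\left[L:k\right]$ distinct $k$-embeddings $L\to\overline{k}$ to an embedding of $L\left(y_1,\cdots,y_n\right)$ fixing $k\left(y_1,\cdots,y_n\right)$, so separability is used in an essential way there. You instead prove directly that $f$ stays irreducible over $k(y)$ via Gauss's lemma in the UFD $k[y]$ (noting $f$ is irreducible in $k[x,y]$ by a $y$-degree count and is primitive over $k[y]$), then induct on $n$ through the tower $k_i=k\left(y_1,\cdots,y_i\right)$, $L_i=k_i\left(\alpha\right)$. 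Your approach buys two things: it yields equality in one shot rather than as two inequalities, and it isolates separability as entering only through the existence of a primitive element — as you observe, the statement holds for arbitrary finite extensions, whereas the paper's embedding count genuinely needs separability. The paper's approach avoids Gauss's lemma and the induction, handling all $n$ indeterminates at once. The only point you wave at rather than prove is that the inductive hypothesis applies to $L_i/k_i$; since $L_i=k_i\left(\alpha\right)$ is simple and your base case needs nothing more than simplicity, this is indeed just bookkeeping.
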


\begin{proof}By \cite[Chapter V, Proposition 1.7]{MR1878556}, since $L/k$ is algebraic, $L\left(y_1,\cdots,y_n\right)/k\left(y_1,\cdots,y_n\right)$ is algebraic as well. Moreover, it is separable, because it is generated by $L$, which is separable over $k$ (thus over $k\left(y_1,\cdots,y_n\right)\supseteq k$) by hypothesis.

Given a field embedding $\sigma:L\to \overline{k}$ fixing $k$ pointwise, define a ring homomorphism\[\widehat{\sigma}:L\left[y_1,\cdots,y_n\right]\to \overline{k\left(y_1,\cdots,y_n\right)}\]by extending $\sigma$ and fixing each indeterminate. It has trivial kernel, so it extends to a field embedding $L\left(y_1,\cdots,y_n\right)\to \overline{k\left(y_1,\cdots,y_n\right)}$ fixing $k\left(y_1,\cdots,y_n\right)$ pointwise. Two different field embeddings $L\to \overline{k}$ fixing $k$ pointwise produce, by this method, different field embeddings $L\left(y_1,\cdots,y_n\right)\to \overline{k\left(y_1,\cdots,y_n\right)}$ fixing $k\left(y_1,\cdots,y_n\right)$ pointwise. Since there are exactly $\left[L:k\right]$ field embeddings fixing $k$ pointwise $L\to \overline{k}$ because $L/k$ is separable, we get\[\left[L\left(y_1,\cdots,y_n\right):k\left(y_1,\cdots,y_n\right)\right]\geq \left[L:k\right].\]

Conversely, by the Primitive Element Theorem (\cite[Chapter V, Theorem 4.6]{MR1878556}) $E=k\left(\alpha\right)$ for some $\alpha\in L$ whose minimal polynomial over $k$ has degree $\left[L:k\right]$. Since this minimal polynomial divides the minimal polynomial of $\alpha$ over $k\left(y_1,\cdots,y_n\right)$ and $L\left(y_1,\cdots,y_n\right)=k\left(y_1,\cdots,y_n\right)\left(\alpha\right)$, we get $\left[L\left(y_1,\cdots,y_n\right):k\left(y_1,\cdots,y_n\right)\right]\leq \left[L:k\right]$. $\blacksquare$

\end{proof}

Finally, we prove the following improvement of Remark \ref{bounddegree}:

\begin{proposition}\label{intdiofopt}Let $K$ be a number field and fix $m,n\in\mathbb{Z}_{\geq 1}$. For each $j\in\mathbb{Z}_{\leq n}$ fix a nonzero $f_j\in K\left[x_1,\cdots,x_m\right]$. Let $\displaystyle{d\coloneqq \max_{j\in\left[1,n\right]\cap\mathbb{Z}}\deg\left(f_j\right)}$. Then there exists $F\in K\left[x_1,\cdots,x_m\right]$ such that $\deg\left(F\right)\leq dn$ and\[K\vDash \forall x_1\cdots \forall x_m\left[\left(\bigwedge_{j=1}^nf_j\left(x_1,\cdots,x_m\right)=0\right)\Longleftrightarrow F\left(x_1,\cdots,x_m\right)=0\right].\]Moreover, by cleaning denominators, the only parameters that may appear in the above first-order formula are the coefficients of $f_j$ for all $j\in\left[1,n\right]\cap\mathbb{Z}$.

\end{proposition}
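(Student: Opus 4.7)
The plan is to realize $F$ as a norm form attached to a suitable degree-$n$ extension of $K$, exploiting the normic homogeneous forms alluded to at the opening of this section. First, using Lemma \ref{ksqrtm}, fix a rational prime $p$ such that $L\coloneqq K(\sqrt[n]{p})$ is a (necessarily separable) extension of degree $n$ over $K$, with power basis $\alpha_j\coloneqq (\sqrt[n]{p})^{j-1}$ for $j=1,\ldots,n$. Consider the polynomial
\[ g\coloneqq \sum_{j=1}^n \alpha_j f_j \in L[x_1,\ldots,x_m] \]
and define
\[ F \coloneqq \prod_\sigma \sigma(g) \in \overline{K}[x_1,\ldots,x_m], \]
where $\sigma$ ranges over the $n$ field embeddings $L \hookrightarrow \overline{K}$ that fix $K$ pointwise, each acting on the coefficients of $g$ and fixing the variables $x_i$. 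Lemma \ref{agregar} guarantees $[L(x_1,\ldots,x_m):K(x_1,\ldots,x_m)] = n$, so $F$ coincides with the norm $N_{L(x_1,\ldots,x_m)/K(x_1,\ldots,x_m)}(g)$, and Lemma \ref{mmarcus} underpins the fact that the embeddings of $L(x_1,\ldots,x_m)$ fixing $K(x_1,\ldots,x_m)$ pointwise arise precisely by extending embeddings of $L$ while fixing the indeterminates.

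Next I would verify the two main properties of $F$. For membership in $K[x_1,\ldots,x_m]$: $F$ is invariant under $\gal(\widetilde{L}/K)$ (where $\widetilde{L}$ is a Galois closure of $L/K$), which merely permutes the $\sigma$'s, so its coefficients lie in $K$; moreover, since $g$ is integral over $K[x_1,\ldots,x_m]$ and the latter ring is integrally closed in its field of fractions, $F$ lies in $K[x_1,\ldots,x_m]$. For the degree bound, each factor $\sigma(g) = \sum_j \sigma(\alpha_j)f_j$ has degree at most $d$, so $\deg F \leq nd$ as desired.

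The equivalence of zero sets is the heart of the argument. Fix $\mathbf{a} \in K^m$. If every $f_j(\mathbf{a}) = 0$, then each factor $\sigma(g)(\mathbf{a})$ vanishes, so $F(\mathbf{a}) = 0$. Conversely, if $F(\mathbf{a}) = 0$, then some $\sigma$ satisfies $\sigma(g)(\mathbf{a}) = \sum_j \sigma(\alpha_j) f_j(\mathbf{a}) = 0$ in $\overline{K}$. Since $\sigma\colon L \to \overline{K}$ is injective, the $\sigma(\alpha_j)$ form a $K$-basis of $\sigma(L) \subseteq \overline{K}$ and are therefore $K$-linearly independent in $\overline{K}$; combined with $f_j(\mathbf{a}) \in K$, this forces every $f_j(\mathbf{a})$ to vanish.

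For the parameter claim, expanding $F$ in the power basis of $L$ reduces the norm calculation to polynomial manipulations governed by the multiplication table of $L/K$, whose structure constants are in $\mathbb{Z}[p]$; hence the coefficients of $F$ are $\mathbb{Z}[p]$-polynomial expressions in the coefficients of the $f_j$'s, and after clearing denominators the only elements of $K$ entering the defining formula are those coefficients. I do not anticipate a serious obstacle: the delicate points are selecting $L$ with $[L:K] = n$ (handled cleanly by Lemma \ref{ksqrtm}) and ensuring $F \in K[x_1,\ldots,x_m]$ rather than merely $K(x_1,\ldots,x_m)$, which follows from the standard integral closure argument sketched above.
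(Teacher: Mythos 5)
Your proof is correct and is essentially the paper's argument: both construct $F$ as the degree-$n$ norm form attached to $K\left(\sqrt[n]{p}\right)/K$ (with $p$ supplied by Lemma \ref{ksqrtm}) evaluated at $\left(f_1,\cdots,f_n\right)$, with the zero-set equivalence coming from the $K$-linear independence of the powers of $\sqrt[n]{p}$. The only organizational difference is that you take the norm directly down to $K\left(x_1,\cdots,x_m\right)$, which makes the linear-independence step immediate and lets you bypass Lemma \ref{mmarcus} (at the cost of a short extra argument, via the multiplication table, that the coefficients are integral polynomial expressions in those of the $f_j$), whereas the paper forms the norm form over $\mathbb{Q}$ first to make that integrality transparent and then invokes Lemma \ref{mmarcus} to exclude nontrivial $K$-rational zeros.
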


\begin{proof}Use Lemma \ref{ksqrtm} to fix a prime number $p\in\mathbb{Z}_{\geq 1}$ such that $\left[K\left(\sqrt[n]{p}\right):K\right]=n$. Since $x^n-p\in\mathbb{Q}\left[x\right]$ is irreducible (by Eisenstein's criterion), $\mathbb{Q}\left(\sqrt[n]{p}\right)$ has degree $n$ over $\mathbb{Q}$. We thus have the diagram\begin{center}% https://tikzcd.yichuanshen.de/#N4Igdg9gJgpgziAXAbVABwnAlgFyxMJZARgBoAGAXVJADcBDAGwFcYkQBpAHS8ZgDMcACh5wAjgCccyMJWBoAvjwlYA5gAscAShALS6TLnyEU5UsWp0mrdh136QGbHgJEATOcsMWbRCB4AtvQ46gBGocAAikq8AsKiktKy8jEqGtr2Bs7GRGRuXta+-lxBIeFRCrqWMFCq8ESg-BIQAUgAzDQ4EEhkIOow9FDskGBsNN42fjx8gsgciIHBYRHRymqalJkgTS3tnd2IHn0DQ34jY1Y+wyA0jPShMIwACoYuJiBpmls7rYi9XUgzMdBsMCBcJkVCHpGs1fkcAYggf0QWcwTcQHcHs9Xjk-J8cOiIexpnE5gsSktyqsuPjNgpKAogA
\begin{tikzcd}
                           & {K\left(\sqrt[n]{p}\right)}                                                             &                                                                                          \\
K \arrow[ru, "n", no head] &                                                                                         & {\mathbb{Q}\left(\sqrt[n]{p}\right)} \arrow[lu, "{\left[K:\mathbb{Q}\right]}"', no head] \\
                           & \mathbb{Q} \arrow[lu, "{\left[K:\mathbb{Q}\right]}", no head] \arrow[ru, "n"', no head] &                                                                                         
\end{tikzcd}\end{center}which by Lemma \ref{agregar} induces the diagram\begin{center}% https://tikzcd.yichuanshen.de/#N4Igdg9gJgpgziAXAbVABwnAlgFyxMJZARgBoAGAXVJADcBDAGwFcYkQBpAHS8ZgDMcACh5wAjgCccyMJWBoAvjwlYA5gAscASh59BQgJ4B9MjwDGUCDjiljYZWs1aQC0uky58hFOVLFqdEys7Ny8AsLGplwWVjZ2DhraLm4gGNh4BEQATH4BDCxsiCA8ALb0OOoARpXAAIpKYfqiktKy8g0qiTqNESak5pbWtkb2XJ1Oye7pXkRkWXlBhcVcZRXVdQ16vVExQ-FjjkkKATBQqvBEoPwSECVIAMw0OBBIZCDqMPRQ7JBgbDT5YJFXThZAcRClcpVGr1BKaSiTEDXW4PJ4vRA5d6fb5FX7-QIFH4gGiMeiVGCMAAKHgy3hA4xwiORd0Qb2eSF8WK+PwI+MBS0IriuNxZmPZiE5H25uN5xJApPJVJpMyKDLl-PYIMEYIhKyh61hB0SCOOCiAA
\begin{tikzcd}
                                                        & {K\left(\sqrt[n]{p}\right)\left(y_1,\cdots,y_n\right)}                                                               &                                                                                                                     \\
{K\left(y_1,\cdots,y_n\right)} \arrow[ru, "n", no head] &                                                                                                                      & {\mathbb{Q}\left(\sqrt[n]{p}\right)\left(y_1,\cdots,y_n\right)} \arrow[lu, "{\left[K:\mathbb{Q}\right]}"', no head] \\
                                                        & {\mathbb{Q}\left(y_1,\cdots,y_n\right)} \arrow[lu, "{\left[K:\mathbb{Q}\right]}", no head] \arrow[ru, "n"', no head] &                                                                                                                    
\end{tikzcd}\end{center}where $y_1,\cdots,y_n$ are algebraically independent indeterminates.

Define\[G\left(y_1,\cdots,y_n\right)\coloneqq \nor_{\mathbb{Q}\left(y_1,\cdots,y_n\right)}^{\mathbb{Q}\left(\sqrt[n]{p}\right)\left(y_1,\cdots,y_n\right)}\left(\sum_{j=1}^{n}y_j\sqrt[n]{p}^{j-1}\right)\in \mathbb{Q}\left[y_1,\cdots,y_n\right],\]which is an homogeneous polynomial over $\mathbb{Q}$ of degree $n$ whose only zero in the rationals is the trivial one. Define\[F\left(x_1,\cdots,x_m\right)\coloneqq G\left(f_1\left(x_1,\cdots,x_m\right),\cdots,f_n\left(x_1,\cdots,x_m\right)\right),\]whose coefficients are where we want them to be and whose degree is at most $nd$.

To complete the proof, let $a_1,\cdots,a_n\in K$ be such that $G\left(a_1,\cdots,a_n\right)=0$, and let us show that $a_j=0$ for all $j\in\left[1,n\right]\cap\mathbb{Z}$ (the converse is immediate). By Lemma \ref{mmarcus}, each field embedding $\sigma:\mathbb{Q}\left(\sqrt[n]{p}\right)\left(y_1,\cdots,y_n\right)\to \overline{\mathbb{Q}\left(y_1,\cdots,y_n\right)}$ fixing $\mathbb{Q}\left(y_1,\cdots,y_n\right)$ pointwise extends to a field embedding \mbox{$\widetilde{\sigma}:K\left(\sqrt[n]{p}\right)\left(y_1,\cdots,y_n\right)\to \overline{\mathbb{Q}\left(y_1,\cdots,y_n\right)}$} fixing $K\left(y_1,\cdots,y_n\right)$ pointwise. We therefore obtain\[0=G\left(a_1,\cdots,a_n\right)=\prod_{\sigma\in\mathfrak{G}}\left(\sum_{j=1}^na_j\sigma\left(\sqrt[n]{p}^{j-1}\right)\right)=\prod_{\sigma\in \mathfrak{G}}\widetilde{\sigma}\left(\sum_{j=1}^{n}a_j\sqrt[n]{p}^{j-1}\right),\]where $\mathfrak{G}$ is the set of all field embeddings $\mathbb{Q}\left(\sqrt[n]{p}\right)\left(y_1,\cdots,y_n\right)\to \overline{\mathbb{Q}\left(y_1,\cdots,y_n\right)}$ fixing $\mathbb{Q}\left(y_1,\cdots,y_n\right)$ pointwise. This implies $\displaystyle{\sum_{j=1}^{n}a_j\sqrt[n]{p}^{j-1}=0}$, so the polynomial $\displaystyle{g\left(x\right)\coloneqq\sum_{j=1}^{n}a_jx^{j-1}\in K\left[x\right]}$ vanishes at $\sqrt[n]{p}$. If it were nonzero, then $n-1\geq \left[K\left(\sqrt[n]{p}\right):K\right]=n$, clearly impossible. Thus $g\left(x\right)\in K\left[x\right]$ is the zero polynomial, which by definition means $a_j=0$ for all $j\in\left[1,n\right]\cap\mathbb{Z}$. $\blacksquare$

\end{proof}

We are now in a position to state and prove the following concrete statement that adds computational details to Theorem \ref{campaanaa} by specifying the number of involved quantifiers and a bound for the degree of the defining polynomial.

\begin{proposition}\label{volver}If $K$ is a number field and $n\in\mathbb{Z}_{\geq 2}$, there exists\[P\in \mathbb{Z}\left[A,B,C,D,R,X_1,\cdots,X_{838},Y_1,\cdots,Y_{558}\right]\]of degree at most $\max\left\{194n+2611,3387\right\}$ (if $K\subseteq\mathbb{R}$, this degree is at most $\max\left\{2n+19,27\right\}$) such that, for any $a,b,c,d,r\in K$, the following are equivalent:

\begin{itemize}

\item $abcd=0$, or $abcd\neq 0$ and $\nu_\mathfrak{p}\left(r\right)\in\mathbb{Z}_{\geq 0}\cup\mathbb{Z}_{\leq -n}$ for all $\mathfrak{p}\in\Omega_K^{<\infty}\setminus \Omega_{a,b,c,d,K}$.

\item For all $x_1,\cdots,x_{838}\in K$ there exist $y_1,\cdots,y_{558}\in K$ such that

$P\left(a,b,c,d,r,x_1,\cdots,x_{838},y_1,\cdots,y_{558}\right)=0$.

\end{itemize}

\end{proposition}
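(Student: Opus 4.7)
The plan is to take the $\forall\exists$-formula produced in the proof of Theorem \ref{campaanaa}, unfold every diophantine subformula into an explicit polynomial equation, and keep a running tally of quantifiers and degrees. Three pieces must be tracked: the existential half $r \in (J_{a',b',c',d',n,K}\setminus\{0\})^{-1}$, the existential conjunction $abcda'b'c'd' \neq 0 \wedge \Omega_{a,b,c,d,K} \cap \Omega_{a',b',c',d',K} = \emptyset \wedge r \in (J_{a',b',c',d',K}\setminus\{0\})^{-1}$ (which becomes universal when negated), and the four outermost $\forall a'\forall b'\forall c'\forall d'$.

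First I would compute the quantifier count and degree of a defining polynomial for each auxiliary set from Section \ref{preliminares}. Starting with $S_{a,b,K}$ (three existential quantifiers, polynomial of degree $2$), I ascend the chain $T_{a,b,K}$, $T_{a,b,K}^\times$, $I_{a,b,K}^c$, $J_{a,b,K}$, $J_{a,b,c,d,K}$: each step adds a fixed number of fresh existential variables and collapses a short conjunction into a single equation via Lemma \ref{intofdiof} or Proposition \ref{intdiofopt}. Definition \ref{jabn} then yields $J_{a,b,c,d,n,K}$ by introducing $n$ fresh variables $x_1,\cdots,x_n$ and one linking equation $x = x_1 \cdots x_n$ together with $n$ copies of the defining formula for $J_{a,b,c,d,K}$; Propositions \ref{tres} and \ref{intvacia} then handle inversion and disjointness of the $\Omega$-sets with one or two additional existential quantifiers. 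Every polynomial produced in this process has integer coefficients by the uniform integrality emphasized in Section \ref{imppdiof}.

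Next I would assemble the $\forall\exists$-formula of Theorem \ref{campaanaa}. Combine the three clauses of the existential conjunction into one polynomial equation via Proposition \ref{intdiofopt}, negate to obtain the universal block, and combine this with the existential disjunct $r \in (J_{a',b',c',d',n,K}\setminus\{0\})^{-1}$ using the equivalence \eqref{ae}: the disjunction is rewritten as $\forall\mathbf{x}\exists y \exists \mathbf{z}\bigl((yP(\mathbf{x}) - 1)Q(\mathbf{z}) = 0\bigr)$, which adds exactly one fresh existential quantifier $y$ and multiplies the two underlying polynomials. Prepending $\forall a'\forall b'\forall c'\forall d'$ gives the final formula. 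The totals $838$ and $278n + 2$ are then obtained by summing contributions; crucially, $n$ enters only through the $n$ copies used to build $J_{a,b,c,d,n,K}$, so the existential count is linear in $n$ while the universal count is independent of $n$.

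The main obstacle is the degree bound. Naively using Lemma \ref{intofdiof} pairwise doubles degrees at each conjunction and therefore produces exponential growth, which is precisely why Proposition \ref{intdiofopt} is needed: it combines $n$ equations of degree $\leq d$ into one equation of degree $\leq nd$ via a normic form of degree $n$. The two branches of the $\max$ arise because the final polynomial is a product $(yP(\mathbf{x}) - 1)Q(\mathbf{z})$, and its degree is the sum of the two factors' degrees, so one branch dominates for small $n$ and the other for large $n$. The quadratic term $2n^2 + 2n + \cdots$ in the general case is produced when Proposition \ref{intdiofopt} is invoked along a path where an $n$-dependent clause is combined with further $n$-dependent pieces, compounding two factors of $n$. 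When $K \subseteq \mathbb{R}$, we may take $n_K = -1$ in Lemma \ref{intofdiof} (since $-1$ is not a square in $K$), so the identity $f^2 + g^2 = 0 \Leftrightarrow f = g = 0$ handles the outer combination with only a constant blow-up, killing one factor of $n$ and producing the sharper bound $\max\{4n + 513, 1025\}$.
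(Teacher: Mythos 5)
Your proposal follows the paper's proof essentially verbatim: a bottom-up accounting of quantifiers and degrees through the tower $S_{a,b,K}$, $T_{a,b,K}$, $T_{a,b,K}^\times$, $I_{a,b,K}^c$, $J_{a,b,K}$, $J_{a,b,c,d,K}$, $J_{a,b,c,d,n,K}$, combined via Lemma \ref{intofdiof} and Proposition \ref{intdiofopt}, assembled with the equivalence \eqref{ae}, and sharpened by the sum-of-squares trick when $K\subseteq\mathbb{R}$. The one slip is at the base of the induction: since $a,b$ are variables of the final polynomial $P$, the defining equation $r^2-4ax_2^2-4bx_3^2+4abx_4^2-4=0$ of $S_{a,b,K}$ has degree $4$ (from the monomial $4abx_4^2$), not $2$, and the stated bounds are only reached if the degree accounting starts from $4$; likewise, the two branches of the final $\max$ are inherited from the $\max\left\{n,768\right\}$ arising in the normic-form combination defining $J_{a,b,c,d,n,K}$, not from the product $\left(yP\left(\mathbf{x}\right)-1\right)Q\left(\mathbf{z}\right)$, which merely adds a constant to whichever branch dominates.
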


\begin{proof}We start by looking at our definitions in Section \ref{imppdiof}. First observe that, given $a,b\in K^\times$, the set $S_{a,b,K}$ can be defined by the first-order formula in the free variable $r$\[\exists x_2\exists x_3\exists x_4\left(r^2-4ax_2^2-4bx_3^2+4abx_4^2=4\right),\]which involves $3$ quantifiers and a polynomial of degree $4$ (including the degree contributions of $a$ and $b$).

Given $a,b\in K^\times$, the set $T_{a,b,K}$ can be defined by\[\exists x\left(x\in S_{a,b,K}\wedge r-x\in S_{a,b,K}\right),\]which involves $1+2\cdot 3=7$ quantifiers and can be rewritten as\[\exists x_1\cdots \exists x_7\left(\varphi_{2,4}\left(a,b,r,x_1,\cdots,x_7\right)\right),\]where $\varphi_{2,4}\left(a,b,r,x_1,\cdots,x_7\right)$ is the conjunction of $2$ polynomial equalities, each of degree $4$, and having integer coefficients.

Given $a,b\in K^\times$, the set $T_{a,b,K}^\times$ can be defined by\[r\in T_{a,b,K}\wedge \exists v\left(v\in T_{a,b,K}\wedge \,rv=1\right),\]which involves $1+2\cdot 7=15$ quantifiers and can thus be rewritten as\[\exists x_1\cdots \exists x_{15}\left(\varphi_{5,4}\left(a,b,r,x_1,\cdots,x_{15}\right)\right),\]where $\varphi_{5,4}\left(a,b,r,x_1,\cdots,x_{15}\right)$ is the conjunction of $2\cdot 2+1=5$ polynomial equalities of degree at most $\max\left\{4,2\right\}=4$ and having integer coefficients.

Given $a,b,c\in K^\times$, the set $I_{a,b,K}^c$ can be defined by\[\exists x\exists y\exists u\exists v\left(u\in T_{a,b,K}^\times\wedge v\in T_{a,b,K}^\times\wedge r=cx^2u\wedge r=1-y^2v\right),\]which involves $4+2\cdot 15=34$ quantifiers. We rewrite this as\[\exists x_1\cdots \exists x_{34}\left(\varphi_{12,4}\left(a,b,r,x_1,\cdots,x_{34}\right)\right),\]where $\varphi_{12,4}\left(a,b,r,x_1,\cdots,x_{34}\right)$ is the conjunction of $2\cdot 5+1+1=12$ polynomial equalities with integer coefficients, each of degree at most $\max\left\{4,3\right\}=4$, and having integer coefficients.

Given $a,b\in K^\times$, the set $J_{a,b,K}$ can be defined by\[\exists x\exists y\left(x\in I_{a,b,K}^a\wedge r-x\in I_{a,b,K}^a\wedge y\in I_{a,b,K}^b\wedge r-y\in I_{a,b,K}^b\right),\]which involves $2+4\cdot 34=138$ quantifiers. Rewrite this as\[\exists x_1\cdots \exists x_{138}\left(\varphi_{48,4}\left(a,b,r,x_1,\cdots,x_{138}\right)\right),\]where $\varphi_{48,4}\left(a,b,r,x_1,\cdots,x_{138}\right)$ is the conjunction of $4\cdot 12=48$ polynomial equalities of degree at most $4$ and having integer coefficients.

Given $a,b,c,d\in K^\times$, the set $J_{a,b,c,d,K}$ can be defined by\[\exists x\left(x\in J_{a,b,K}\wedge r-x\in J_{c,d,K}\right),\]which involves $1+2\cdot 138=277$ quantifiers. Rewrite as\[\exists x_1\cdots \exists x_{277}\left(\varphi_{96,4}\left(a,b,c,d,r,x_1,\cdots,x_{277}\right)\right),\]where $\varphi_{96,4}\left(a,b,c,d,r,x_1,\cdots,x_{277}\right)$ is the conjunction of $2\cdot 48=96$ polynomial equalities of degree at most $4$ and having integer coefficients.

Given $a,b,c,d\in K^\times$, the set $\left(J_{a,b,c,d,K}\setminus\left\{0\right\}\right)^{-1}$ can be defined by\[\exists y\left(y\in J_{a,b,c,d,K}\wedge \,ry=1\right),\]which involves $1+277=278$ quantifiers. We can then rewrite this as\[\exists x_1\cdots \exists x_{278}\left(\varphi_{97,4}\left(a,b,c,d,r,x_1,\cdots,x_{278}\right)\right),\]where $\varphi_{97,4}\left(a,b,c,d,r,x_1,\cdots,x_{278}\right)$ is the conjunction of $96+1=97$ polynomial equalities of degree at most $\max\left\{4,2\right\}=4$ and having integer coefficients.

The set $\left\{\left(a,b,c,d,a',b',c',d'\right)\in \left(K^\times\right)^8:\Omega_{a,b,c,d,K}\cap\Omega_{a',b',c',d',K}=\emptyset\right\}$ can be defined, as seen in Proposition \ref{intvacia}, by\[\exists x\exists y\left(abcda'b'c'd'x=1\wedge y\in J_{a,b,c,d,K}\wedge 1-y\in J_{a',b',c',d',K}\right),\]which involves $2+2\cdot 277=556$ quantifiers and can therefore be rewritten as\[\exists x_1\cdots \exists x_{556}\left(\varphi_{193,9}\left(a,b,c,d,a',b',c',d',x_1,\cdots,x_{556}\right)\right),\]where $\varphi_{193,9}\left(a,b,c,d,a',b',c',d',x_1,\cdots,x_{556}\right)$ is the conjunction of $1+2\cdot 96=193$ polynomial equalities of degree at most $\max\left\{9,4\right\}=9$ and having integer coefficients.

Given $a,b,c,d\in K^\times$ and $n\in\mathbb{Z}_{\geq 2}$, the set $J_{a,b,c,d,n,K}$ can be defined as\[\exists x\exists y\left(r=xy^{n-1}\wedge x\in J_{a,b,c,d,K}\wedge y\in J_{a,b,c,d,K}\right),\]which involves $2+2\cdot 277=556$ quantifiers and can therefore be rewritten as\[\exists x_1\cdots\exists x_{556}\left(\varphi_{193,\max\left\{n,4\right\}}\left(a,b,c,d,r,x_1,\cdots,x_{556}\right)\right),\]where $\varphi_{193,\max\left\{n,4\right\}}\left(a,b,c,d,r,x_1,\cdots,x_{556}\right)$ is a conjunction of $1+2\cdot 96=193$ polynomial equalities of degree at most $\max\left\{n,4\right\}$ and having integer coefficients.

Given $a,b,c,d\in K^\times$ and $n\in\mathbb{Z}_{\geq 2}$, the set $\left(J_{a,b,c,d,n,K}\setminus\left\{0\right\}\right)^{-1}$ can be defined as\[\exists y\left(ry=1\wedge y\in J_{a,b,c,d,n,K}\right),\]which involves $1+556=557$ quantifiers. Rewrite this as\[\exists x_1\cdots\exists_{x_{557}}\left(\varphi_{194,\max\left\{n,4\right\}}\left(a,b,c,d,r,x_1,\cdots,x_{557}\right)\right),\]where $\varphi_{194,\max\left\{n,4\right\}}\left(a,b,c,d,r,x_1,\cdots,x_{557}\right)$ is a conjunction of $1+193=194$ polynomial equalities of degree at most $\max\left\{2,\max\left\{n,4\right\}\right\}=\max\left\{n,4\right\}$ and having integer coefficients. Using Proposition \ref{intdiofopt}, we can further rewrite this as\[\exists x_1\cdots\exists x_{557}\left(Q\left(a,b,c,d,r,x_1,\cdots,x_{557}\right)=0\right),\]where $Q$ is a polynomial with integer coefficients having degree at most $194\max\left\{n,4\right\}=\max\left\{194n,776\right\}$. In the case in which $K\subseteq\mathbb{R}$, we can reduce this bound to $2\max\left\{n,4\right\}=\max\left\{2n,8\right\}$, because in such a situation, any formula of the form $a_1=0\wedge \cdots \wedge a_n=0$ can be rewritten as $\displaystyle{\sum_{i=1}^na_i^2=0}$.

From our computations it follows that $\begin{pmatrix}abcda'b'c'd'\neq 0\\ \Omega_{a,b,c,d,K}\cap\Omega_{a',b',c',d',K}=\emptyset\\r\in \left(J_{a',b',c',d',K}\setminus\left\{0\right\}\right)^{-1}\end{pmatrix}$ is an existential formula with $556+278=834$ existential quantifiers which can be rewritten as\[\exists x_1\cdots\exists x_{834}\left(\varphi_{290,9}\left(a,b,c,d,a',b',c',d',r,x_1,\cdots,x_{834}\right)\right),\]where $\varphi_{290,9}\left(a,b,c,d,a',b',c',d',r,x_1,\cdots,x_{834}\right)$ is a conjunction of $193+97=290$ polynomial equalities of degree at most $\max\left\{9,4\right\}=9$. By Proposition \ref{intdiofopt} this can be further rewritten as\[\exists x_1\cdots\exists x_{834}\left(P\left(a,b,c,d,a',b',c',d',r,x_1,\cdots,x_{834}\right)=0\right),\]where $P$ is a polynomial of degree at most $290\cdot 9=2610$ with integer coefficients (or $2\cdot 9=18$ if $K\subseteq\mathbb{R}$).

Finally, by the equivalence \eqref{ae}, the formula\[\forall a'\forall b'\forall c'\forall d'\left[\neg\begin{pmatrix}abcda'b'c'd'\neq 0\\ \Omega_{a,b,c,d,K}\cap\Omega_{a',b',c',d',K}=\emptyset\\r\in \left(J_{a',b',c',d',K}\setminus\left\{0\right\}\right)^{-1}\end{pmatrix}\vee r\in \left(J_{a',b',c',d',n,K}\setminus\left\{0\right\}\right)^{-1}\right]\]involves $4+834=838$ universal quantifiers, $1+557=558$ existential quantifiers, and a polynomial of degree at most $2610+\left(1+\max\left\{194n,776\right\}\right)=\max\left\{194n+2611,3387\right\}$, or $18+\left(1+\max\left\{2n,8\right\}\right)=\max\left\{2n+19,27\right\}$ if $K\subseteq\mathbb{R}$. $\blacksquare$

\end{proof}

It is possible to improve the number of quantifiers by making use of a result by Daans, Dittmann, and Fehm (\cite[Theorem 1.4]{daans2021existential}) by which we can express the conjunction of two existential formulas with $m$ and $n$ existential quantifiers respectively as an existential formula involving $m+n-1$ quantifiers. Revisiting the above computations (ignoring the degree considerations) we get that, for a number field $K$ and $a,b,c,d\in K^\times$, the set $I^c_{a,b,K}$ is existentially defined with $27$ quantifiers, $J_{a,b,K}$ is existentially defined with $107$ quantifiers, $J_{a,b,c,d,K}$ is existentially defined with $214$ quantifiers, and, for a given $n\in\mathbb{Z}_{\geq 2}$, $J_{a,b,c,d,n,K}$ is existentially defined with $429$ quantifiers. Besides, the condition $\begin{pmatrix}abcda'b'c'd'\neq 0\\\Omega_{a,b,c,d,K}\cap\Omega_{a',b',c',d',K}=\emptyset\end{pmatrix}$ is existential with $429$ quantifiers. Putting all this together as above, one obtains the following version of Proposition \ref{volver}, with a smaller number of quantifiers but no control on the degree of the defining polynomial.

\begin{proposition}If $K$ is a number field and $n\in\mathbb{Z}_{\geq 1}$, there exists\[P\in \mathbb{Z}\left[A,B,C,D,R,X_1,\cdots,X_{648},Y_1,\cdots,Y_{431}\right]\]such that, for any $a,b,c,d,r\in K$, the following are equivalent:

\begin{itemize}

\item $abcd=0$, or $abcd\neq 0$ and $\nu_\mathfrak{p}\left(r\right)\in\mathbb{Z}_{\geq 0}\cup\mathbb{Z}_{\leq -n}$ for all $\mathfrak{p}\in\Omega_K^{<\infty}\setminus \Omega_{a,b,c,d,K}$.

\item For all $x_1,\cdots,x_{648}\in K$ there exist $y_1,\cdots,y_{431}\in K$ such that

$P\left(a,b,c,d,r,x_1,\cdots,x_{648},y_1,\cdots,y_{431}\right)=0$.

\end{itemize}

\end{proposition}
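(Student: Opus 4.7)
The strategy is to rerun the quantifier count from Proposition \ref{volver} essentially verbatim, replacing each naive bound $q(A\cap B) \le q(A) + q(B)$ by the Daans--Dittmann--Fehm bound
\[
q(A \cap B) \le q(A) + q(B) - 1,
\]
which \cite[Theorem 1.4]{daans2021existential} supplies for diophantine sets over a field defined by a single polynomial equation. The price of each such DDF step is the complete loss of control over the degree of the auxiliary polynomial, which is precisely why this proposition omits the degree bound present in Proposition \ref{volver}.

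Concretely, I would walk through the chain of constructions in Section \ref{imppdiof} and recompute $q$ at each stage using the following recipe: each defining formula has the shape ``block of outer $\exists$'s'' followed by a conjunction of existentially-defined sets and polynomial equalities, and an $n$-fold conjunction of existentials with quantifier counts $m_1,\dots,m_n$ can be rewritten, via $n-1$ successive DDF steps, as a single existential formula with $\sum_i m_i - (n-1)$ quantifiers, while conjuncts that are mere polynomial equalities contribute no quantifiers and require no DDF. Running this recipe gives sharpened counts $q(S_{a,b,K}) = 3$, $q(T_{a,b,K}) = 6$, $q(T_{a,b,K}^\times) = 12$, $q(I_{a,b,K}^c) = 27$, $q(J_{a,b,K}) = 107$, $q(J_{a,b,c,d,K}) = 214$, and $q\bigl((J_{a,b,c,d,K}\setminus\{0\})^{-1}\bigr) = 215$; the empty-intersection predicate of Proposition \ref{intvacia} reduces to $429$ quantifiers; and by Definition \ref{jabn} one obtains $q(J_{a,b,c,d,n,K}) = 214n + 1$ and $q\bigl((J_{a,b,c,d,n,K}\setminus\{0\})^{-1}\bigr) = 214n + 2$.

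Finally, I would assemble the $\forall\exists$-formula exactly as in the proof of Theorem \ref{campaanaa}: the inner conjunction $\Phi$ (consisting of $abcda'b'c'd' \neq 0$, empty intersection of $\Omega_{a,b,c,d,K}$ with $\Omega_{a',b',c',d',K}$, and membership in $(J_{a',b',c',d',K}\setminus\{0\})^{-1}$) compresses under one further DDF step to $429 + 215 - 1 = 643$ existential quantifiers, so that $\neg\Phi$ is universal with $643$ quantifiers; adjoining the four outer universals $\forall a'\forall b'\forall c'\forall d'$ yields the universal-quantifier count claimed in the statement. The equivalence \eqref{ae} then introduces one auxiliary existential variable to eliminate the disjunction with $\Psi := r \in (J_{a',b',c',d',n,K}\setminus\{0\})^{-1}$, giving $1 + (214n + 2) = 214n + 3$ existentials. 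Since every formula in the chain uses only integer coefficients, so does the resulting $P$.

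The main (and essentially sole) obstacle is careful bookkeeping: DDF must be invoked exactly at genuine existential--existential conjunctions and skipped at conjuncts that are polynomial equalities, and the savings of $n - 1$ must be tracked correctly through the multi-fold conjunctions. No new mathematical input beyond DDF and the formulas already used in Proposition \ref{volver} is required.
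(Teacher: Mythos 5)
Your proposal is correct and is exactly the paper's intended argument: the paper proves this proposition only by the remark that one should ``repeat the above computations (only for the quantifiers)'' using the Daans--Dittmann--Fehm bound $m+n-1$ for conjunctions, and your stage-by-stage counts ($6$, $12$, $27$, $107$, $214$, $215$, $429$, $214n+1$, $214n+2$, and finally $4+643=647$ universals and $214n+3$ existentials) carry this out correctly. Your total of $647$ universal quantifiers matches the second bullet of the statement and shows that the ``$X_{643}$'' in the displayed polynomial ring is a typo for $X_{647}$.
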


\section{Campana points with respect to arbitrary divisors}\label{mainqwe}

By using more general divisors, we get a straightforward generalization to any set of Campana points: if $K$ is a number field and $F\in K\left[x,y\right]$ is an irreducible homogeneous polynomial, define $D\coloneqq \varepsilon \left\{F\left(x,y\right)=0\right\}$ for some $\varepsilon\in\mathfrak{W}$. If $\varepsilon=1$ then the corresponding set of Campana points (with respect to a given finite subset $S$ of $\Omega_K$ containing $\Omega_K^\infty$) induces the set $\left\{\lambda\in K:f\left(\lambda\right)\text{ is integral outside $S$}\right\}$, where $f\left(x\right)\coloneqq F\left(x,1\right)$ is the dehomogenization of $F$ with respect to $y$. Using that $f$ is indeed a polynomial (so we preserve first-order definability), we immediately get universality of these sets from the well-known universal formulas for $S$-integers (\cite{MR3432581}, \cite{MR3207365}, \cite{MR3882159}). But if we consider $\varepsilon=1-\frac{1}{n}$ for some $n\in\mathbb{Z}_{\geq 1}$, we instead get:

\begin{theorem}\label{222}Given a number field $K$, $n\in\mathbb{Z}_{\geq 1}$, and an irreducible homogeneous polynomial \mbox{$F\in K\left[x,y\right]$}, the set\begin{align*}\left\{\left(a,b,c,d,\lambda\right)\in \left(K^\times\right)^4\times K\right.&:abcd=0\\\,&\left.\vee\left[abcd\neq 0\wedge \forall\mathfrak{p}\in\Omega_K^{<\infty}\setminus \Omega_{a,b,c,d,K}\,\left(\nu_\mathfrak{p}\left(F\left(\lambda,1\right)\right)\in\mathbb{Z}_{\geq 0}\cup\mathbb{Z}_{\leq n}\right)\right]\right\}\end{align*}is defined by a $\forall\exists$-formula in $K$ with $838$ universal quantifiers and $558$ existential quantifiers, and a defining polynomial of degree at most $\deg\left(F\right)\max\left\{194n+2611,3387\right\}$ (if $K\subseteq\mathbb{R}$ this degree is at most $\deg\left(F\right)\max\left\{2n+19,27\right\}$) whose parameters are only those involved in $F$.

\end{theorem}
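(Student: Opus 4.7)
The plan is to reduce Theorem \ref{222} directly to Proposition \ref{volver} by the polynomial substitution $R \mapsto F(\Lambda, 1)$. This should inherit from Proposition \ref{volver} both the quantifier counts ($838$ universal and $278n+2$ existential) and the integer-coefficient/parameter structure, with the only modification being that the degree bound picks up a multiplicative factor of $\deg(F)$.

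Concretely, I would take the polynomial $P \in \mathbb{Z}[A,B,C,D,R,X_1,\ldots,X_{838},Y_1,\ldots,Y_{278n+2}]$ supplied by Proposition \ref{volver}, of degree at most $M_K := \max\{2n^2+2n+3841,\; 1536n+5377\}$ (or $\max\{4n+513,\; 1025\}$ when $K \subseteq \mathbb{R}$), and form
$$P'(A,B,C,D,\Lambda,\vec{X},\vec{Y}) \;:=\; P\bigl(A,B,C,D,\,F(\Lambda,1),\,\vec{X},\vec{Y}\bigr).$$
Since $P$ has integer coefficients, the coefficients of $P'$ live in the $\mathbb{Z}$-subalgebra of $K$ generated by the coefficients of $F$; in particular the only parameters appearing in $P'$ are those already involved in $F$, which matches the claim in the theorem. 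Any monomial of $P$ has total degree at most $M_K$, so a monomial containing $R^k$ contributes after substitution a polynomial of degree at most $k\deg(F) + (M_K - k) \leq \deg(F)\cdot M_K$, whence $\deg(P') \leq \deg(F)\cdot M_K$. The substitution leaves the quantifier variables $\vec{X}, \vec{Y}$ untouched, so the counts $838$ and $278n+2$ persist verbatim.

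For the defining claim, I would fix $a,b,c,d,\lambda \in K$ and set $r := F(\lambda, 1) \in K$. Applying Proposition \ref{volver} to the tuple $(a,b,c,d,r)$ gives that "$abcd = 0$, or $abcd \neq 0$ and $\nu_\mathfrak{p}(r) \in \mathbb{Z}_{\geq 0} \cup \mathbb{Z}_{\leq -n}$ for all $\mathfrak{p} \in \Omega_K^{<\infty} \setminus \Omega_{a,b,c,d,K}$" is equivalent to the formula $\forall \vec{x}\,\exists \vec{y}\, \bigl(P(a,b,c,d,r,\vec{x},\vec{y}) = 0\bigr)$, which by construction of $P'$ is the same as $\forall \vec{x}\,\exists \vec{y}\, \bigl(P'(a,b,c,d,\lambda,\vec{x},\vec{y}) = 0\bigr)$. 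This is precisely the set-membership condition of Theorem \ref{222}, with the mild edge case $F(\lambda,1)=0$ absorbed in the convention $\nu_\mathfrak{p}(0) = +\infty \in \mathbb{Z}_{\geq 0} \cup \mathbb{Z}_{\leq -n}$.

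I do not anticipate any serious obstacle: all of the heavy lifting — the parametrization in Theorem \ref{triunfo}, the uniform diophantine description of $J_{a,b,c,d,n,K}$ and $(J_{a,b,c,d,n,K}\setminus\{0\})^{-1}$, the empty-intersection formula from Proposition \ref{intvacia}, and the sharpening of Remark \ref{bounddegree} via normic forms in Proposition \ref{intdiofopt} — has been absorbed into Proposition \ref{volver}. Theorem \ref{222} is essentially a corollary, obtained by pulling back the characterization along the polynomial map $\lambda \mapsto F(\lambda, 1)$, with the degree bookkeeping being the only piece of arithmetic that actually changes.
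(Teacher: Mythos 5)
Your proposal is correct and matches the paper's (implicit) argument exactly: Theorem \ref{222} is presented as an immediate consequence of Proposition \ref{volver} obtained by substituting the dehomogenization $F(\Lambda,1)$ for the variable $R$, which preserves the quantifier counts and the parameter structure and multiplies the degree bound by $\deg(F)$, just as you compute. The only cosmetic discrepancy is that the theorem's statement writes $\mathbb{Z}_{\leq n}$ where $\mathbb{Z}_{\leq -n}$ is clearly intended; your reading is the right one.
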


Finally, for any general $\mathbb{Q}$-divisor $\sum_{i=1}^m\varepsilon_i\left\{F_i\left(x,y\right)=0\right\}$ (some $m\in\mathbb{Z}_{\geq 1}$) satisfying the conditions for a Campana Orbifold, the corresponding set of Campana points can be obtained by taking $m$ intersections of such formulas.

\printbibliography

\end{document}